\documentclass[12pt,reqno]{amsart}
\usepackage{amssymb}

\usepackage[all,cmtip]{xy}
\usepackage[dvips]{graphics}
\usepackage{epsfig}
\usepackage{subfigure}
\usepackage{hyperref}

\addtolength{\oddsidemargin}{-.875in}
	\addtolength{\evensidemargin}{-.875in}
	\addtolength{\textwidth}{1.75in}

	\addtolength{\topmargin}{-.75in}
	\addtolength{\textheight}{1in}

\newtheorem{theorem}{Theorem}[section]
\newtheorem{proposition}[theorem]{Proposition}

\newtheorem{prob}[theorem]{Problem}
\newtheorem{lemma}[theorem]{Lemma}
\newtheorem{claim}[theorem]{Claim}
\newtheorem{corollary}[theorem]{Corollary}
\theoremstyle{definition}
\newtheorem{definition}[theorem]{Definition}

\theoremstyle{remark}
\newtheorem{remark}[theorem]{Remark}

\DeclareMathOperator{\trop}{trop}
\DeclareMathOperator{\Trop}{Trop}

\DeclareMathOperator{\Div}{Div}
\DeclareMathOperator{\divi}{div}
\DeclareMathOperator{\Prin}{Prin}
\DeclareMathOperator{\Jac}{Jac}

\DeclareMathOperator{\tr}{tr}

\DeclareMathOperator{\id}{id}
\DeclareMathOperator{\val}{val}

\DeclareMathOperator{\Stab}{Stab}

\newcommand{\mg}{\Gamma}
\newcommand{\mt}{T}

\newcommand{\R}{\mathbb R}
\newcommand{\Z}{\mathbb Z}

\DeclareMathOperator{\Aut}{Aut}

\newcommand{\Mtrg}{\ensuremath{M^{\text{tr}}_g}}

\newcommand{\sbs}{\subsection}

\newcommand{\tfae}{the following are equivalent}
\newcommand{\wrt}{with respect to}
\widowpenalty=300
\clubpenalty=300

\title{Tropical hyperelliptic curves}
\author{Melody Chan}
\email{mtchan@math.berkeley.edu}
\address{Department of Mathematics, University of California, Berkeley}

\date{\today}

\begin{document}

\begin{abstract}
We study the locus of tropical hyperelliptic curves inside the moduli space of tropical curves of genus $g$.   We define a harmonic morphism of metric graphs and prove that a metric graph is hyperelliptic if and only if it admits a harmonic morphism of degree 2 to a metric tree.  This generalizes the work of Baker and Norine on combinatorial graphs to the metric case. We then prove that the locus of 2-edge-connected genus $g$ tropical hyperelliptic curves is a $(2g-1)$-dimensional stacky polyhedral fan whose maximal cells are in bijection with trees on $g-1$ vertices with maximum valence 3.  
Finally, we show that the Berkovich skeleton of a classical hyperelliptic plane curve satisfying a certain tropical smoothness condition lies in a  maximal cell of genus $g$ called a standard ladder.
\end{abstract}

\maketitle

\tableofcontents

\section{Introduction}

In this paper, we study the locus of hyperelliptic curves inside the moduli space of tropical curves of a fixed genus $g$.  Our work ties together two strands in the recent tropical geometry literature: tropical Brill-Noether theory on the one hand \cite{spec}, \cite{capbn}, \cite{cdpr}, \cite{lpp}; and tropical moduli spaces of curves on the other \cite{bmv}, \cite{cap10}, \cite{capsurvey}, \cite{ch}, \cite{mz}.  

The work of Baker and Norine in \cite{bn07} and Baker in \cite{spec} has opened up a world of fascinating connections between algebraic and tropical curves. The Specialization Lemma of Baker \cite{spec}, recently extended by Caporaso \cite{capbn}, allows for precise translations between statements about divisors on algebraic curves and divisors on tropical curves. One of its most notable applications is to \emph{tropical Brill-Noether theory}. The classical Brill-Noether theorem in algebraic geometry, proved by Griffiths and Harris, \cite{gh}, is the following.

\begin{theorem}\label{t:bn}Suppose $g, r$, and $d$ are positive numbers and let
\[ \rho(g,r,d) = g-(r+1)(g-d+r). \]
\begin{enumerate}
	\item
If $\rho \geq 0$, then every smooth projective curve $X$ of genus $g$ has a divisor of degree $d$ and rank at least $r$. In fact, the scheme $W^r_d(X)$ parametrizing linear equivalence classes of such divisors has dimension $\min\{\rho,g\}$.
\item
If $\rho < 0$, then the general smooth projective curve of genus $g$ has no divisors of degree $d$ and rank at least $r$.
\end{enumerate}
\end{theorem}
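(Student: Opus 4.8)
\emph{Proof strategy.} This is the classical Brill--Noether theorem, for which several proofs are known; I will sketch the one whose ingredients are closest to the tropical methods of this paper. The plan is to separate the statement into an \emph{existence/lower-bound} half --- part (1), valid for \emph{every} smooth curve --- and an \emph{upper-bound} half --- part (2) together with the sharp dimension count, valid for a \emph{general} curve --- and to attack the first by intersection theory on the Picard variety and the second by degeneration.

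For the existence half I would work on $\mathrm{Pic}^d(X)$. Following Kempf and Kleiman--Laksov, one realizes $W^r_d(X)\subseteq\mathrm{Pic}^d(X)$ as the locus where a suitable map $\varphi\colon E\to F$ of vector bundles (obtained by pushing forward a Poincar\'e line bundle twisted by a large effective divisor) drops rank by at least $r$. Its expected codimension in the $g$-dimensional abelian variety $\mathrm{Pic}^d(X)$ is $(r+1)(g-d+r)$, hence its expected dimension is exactly $\rho$. By the Thom--Porteous formula the class of $W^r_d(X)$, when the locus has the expected dimension, is an explicit polynomial in the Chern classes of $E$ and $F$, which one computes (the Poincar\'e/Kempf--Kleiman--Laksov formula) to be a positive rational multiple of $\theta^{(r+1)(g-d+r)}$; since $\Theta$ is ample this class is nonzero whenever $\rho\geq 0$. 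A nonempty degeneracy locus has dimension at least the expected one, so this simultaneously gives $W^r_d(X)\neq\emptyset$ and the lower bound $\dim W^r_d(X)\geq\min\{\rho,g\}$ of part (1).

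For the upper-bound half I would degenerate. In a flat family of smooth curves the $W^r_d$ organize into a scheme proper over the base, so emptiness is a closed condition and the fiber dimension is upper semicontinuous; it therefore suffices to exhibit a \emph{single} smooth curve $X_0$ of genus $g$ with $\dim W^r_d(X_0)\leq\rho$ (read as $W^r_d(X_0)=\emptyset$ when $\rho<0$). I would take $X_0$ to be the generic fiber of a regular semistable family over a discrete valuation ring whose special fiber is totally degenerate with dual graph a chain of $g$ loops $\Gamma$ having generic edge lengths. Baker's Specialization Lemma \cite{spec} gives $\rk_X(D)\leq\rk_\Gamma(\tau(D))$ for the specialization map $\tau$, so a $g^r_d$ on $X_0$ would produce a divisor of degree $d$ and rank $\geq r$ on $\Gamma$, and one is reduced to the \emph{tropical} Brill--Noether statement for this particular $\Gamma$ --- that $\dim W^r_d(\Gamma)\leq\rho$ --- which is the theorem of Cools--Draeger--Payne--Robbiano \cite{cdpr}, proved by an explicit chip-firing and reduced-divisor analysis on the chain of loops. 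Combining the two halves pins $\dim W^r_d$ to $\min\{\rho,g\}$ for the general curve.

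The main obstacle lies entirely in the upper-bound half: the existence half is essentially formal once one has the determinantal description and the ampleness of $\Theta$, whereas the other direction requires actually \emph{computing} $W^r_d$ on some degenerate object and checking it has exactly the expected dimension. In the tropical route this difficulty is concentrated in (i) the combinatorial control of which divisor classes on the chain of loops are effective and of what rank, and (ii) verifying that such a chain of loops really is the Berkovich skeleton of a one-parameter family of smooth genus-$g$ curves, so that the Specialization Lemma applies --- a smoothability/realizability check. (In the alternative Eisenbud--Harris route via limit linear series on a chain of elliptic curves, the analogous hard steps are the regeneration argument and the dimension count for limit $g^r_d$'s on a reducible curve.)
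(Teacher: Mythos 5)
The paper does not prove this statement at all: Theorem \ref{t:bn} is quoted as the classical Brill--Noether theorem, attributed to Griffiths--Harris \cite{gh}, and serves only as motivation for Problem \ref{p:bn}; the tropical proof of the general-curve half is attributed to \cite{cdpr}. So there is no internal argument to compare yours against, and your sketch must be judged as a standalone outline of the known proofs. As such it is essentially correct and is in fact the route the paper implicitly has in mind: existence and the lower bound $\dim W^r_d(X)\geq\rho$ for every curve via the determinantal description and the Kempf/Kleiman--Laksov class computation (a positive multiple of $\theta^{(r+1)(g-d+r)}$, nonzero by ampleness of $\Theta$), and the generic emptiness/upper bound via specialization to a curve whose skeleton is a chain of $g$ loops with generic edge lengths, Baker's Specialization Lemma \cite{spec}, and the chip-firing analysis of \cite{cdpr} (the authors are Cools, Draisma, Payne, and Robeva, not ``Draeger'' and ``Robbiano''). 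Two steps you gloss over deserve mention: first, for $0\leq\rho$ the inequality $\dim W^r_d(X)\leq\rho$ for the curve $X_0$ does not follow from specializing individual divisors; one needs that the tropicalization of the subvariety $W^r_d(X_0)\subset\operatorname{Pic}^d(X_0)$ preserves dimension (Bieri--Groves, the reference \cite{bg} of this paper) and lands inside $W^r_d(\Gamma)$, whose dimension \cite{cdpr} compute to be exactly $\rho$. Second, the realizability of the generic chain of loops as the skeleton of a smooth genus-$g$ curve over a valued field is itself a nontrivial check (deformation theory of nodal curves, or construction of a regular semistable model with prescribed dual graph and edge lengths), which \cite{cdpr} carry out; you flag this correctly as the smoothability step. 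Finally, note that part (1) as printed asserts $\dim W^r_d(X)=\min\{\rho,g\}$ for \emph{every} smooth curve, while the determinantal argument gives only the inequality $\geq$ in general, with equality for the general curve; your division of labor reflects the correct form of the classical statement.
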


A tropical proof of the Brill-Noether theorem by way of the Specialization Lemma was conjectured in \cite{spec} and obtained by Cools, Draisma, Payne, and Robeva in \cite{cdpr}.  See \cite{capbn} and \cite{lpp} for other advances in tropical Brill-Noether theory.

Another strand in the literature concerns the $(3g-3)$-dimensional moduli space $\Mtrg$ of tropical curves of genus~$g$.  This space was considered by Mikhalkin and Zharkov in \cite{mapplications} and \cite{mz}, in a more limited setting (i.e.~without vertex weights).  It was constructed and studied as a topological space by Caporaso, who proved that $\Mtrg$ is Hausdorff and connected through codimension one \cite{cap10}, \cite{capsurvey}.   In \cite{bmv}, Brannetti, Melo, and Viviani constructed it explicitly in the category of stacky polyhedral fans (see \cite[Definition 3.2]{ch}).
In \cite{ch}, we gained a detailed understanding of the combinatorics of $\Mtrg$, which deeply informs the present~study.

Fix $g, r,$ and $d$ such that $\rho(g,r,d) < 0$. Then the \emph{Brill-Noether locus} $\mathcal{M}^r_{g,d}\subset \mathcal{M}_g$ consists of those genus $g$ curves which are exceptional in Theorem \ref{t:bn}(ii) in the sense that they do admit a divisor of degree $d$ and rank at least $r$. The tropical Brill-Noether locus ${M}^{r,\tr}_{g,d}\subset \Mtrg$ is defined in exactly the same way.

In light of the recent advances in both tropical Brill-Noether theory and tropical moduli theory, it is natural to pose the following

\begin{prob}\label{p:bn}
Characterize the tropical Brill-Noether loci $M_{g,d}^{r,tr}$ inside $M_g^{tr}$.
\end{prob}
\noindent The case $r=1$ and $d=2$ is, of course, the case of hyperelliptic curves, and the combinatorics is already very rich.  In this paper, we are able to characterize the hyperelliptic loci in each genus.  The main results of this paper are the follwing three theorems, proved in Sections 3, 4, and 5, respectively.

\begin{theorem}
\label{t:mainintro}
Let $\mg$ be a metric graph with no points of valence 1, and let $(G,l)$ denote its canonical loopless model. Then the following are equivalent:
\begin{enumerate}
\item $\mg$ is hyperelliptic.
\item There exists an involution $i: G \to G$ such that $G/i$ is a tree.  
\item There exists a nondegenerate harmonic morphism of degree 2 from $G$ to a tree, or $|V(G)|=2$. (See Figure \ref{f:main}).
\end{enumerate}
\end{theorem}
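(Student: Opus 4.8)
The plan is to prove the cycle (ii)$\Rightarrow$(iii)$\Rightarrow$(i)$\Rightarrow$(ii). At the level of finite graphs the equivalence of (ii) and (iii) is, in essence, the hyperelliptic-graph criterion of Baker--Norine --- an involution $i$ with $G/i$ a tree yields the quotient morphism, and conversely a degree-$2$ harmonic morphism has a ``hyperelliptic involution'' built by swapping the two points of each fiber --- so the genuinely new content is linking these combinatorial conditions to the metric notion of hyperellipticity in (i). I will also reduce at the start to the case that $\Gamma$ is $2$-edge-connected: a bridge makes the tropical Abel--Jacobi map non-injective, and the bridged case must be handled separately by decomposing $\Gamma$ along its bridges and tracking a Weierstrass-type condition at each attachment point. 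For $2$-edge-connected $\Gamma$ of positive genus, Abel--Jacobi $\Gamma\to\Jac(\Gamma)$ is injective, so two degree-$1$ divisors are linearly equivalent iff equal; I use this throughout.

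For (iii)$\Rightarrow$(i): a nondegenerate harmonic morphism $G\to T$ of degree $2$ extends --- after harmlessly subdividing $\Gamma$ and choosing lengths on $T$, which is possible since degree $2$ pins down all dilation factors --- to a harmonic morphism of metric graphs $\phi\colon\Gamma\to\Gamma'$ with $\Gamma'$ a metric tree. Harmonic morphisms pull rational functions back to rational functions, so $\phi^\ast$ preserves principal divisors, linear equivalence and effectivity; the standard induction on rank then gives $r_\Gamma(\phi^\ast E')\geq r_{\Gamma'}(E')$. Taking $E'=P'$ a point of $\Gamma'$ and using Riemann--Roch on the genus-$0$ graph $\Gamma'$ (which gives $r_{\Gamma'}(P')=1$), the divisor $\phi^\ast P'$ has degree $2$ and rank $\geq 1$, hence rank exactly $1$ by Clifford when $g\geq 2$, so $\Gamma$ is hyperelliptic. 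When instead $|V(G)|=2$, $G$ is a banana graph $B_n$ and $2v$ is a degree-$2$ rank-$1$ divisor, so $\Gamma$ is again hyperelliptic.

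For (i)$\Rightarrow$(ii): let $D$ have $\deg D=2$ and $r(D)=1$. For each $P\in\Gamma$ the divisor $D-P$ is linearly equivalent to a unique point $\iota(P)$ (uniqueness by injectivity of Abel--Jacobi); then $P+\iota(P)\sim D$ for all $P$, and $\iota\circ\iota=\id$. The graph of $\iota$ is the preimage of $[D]$ under the continuous map $(P,Q)\mapsto[P+Q]$, hence closed, so --- $\Gamma$ being compact and metric --- $\iota$ is a homeomorphism. Since Abel--Jacobi is a local isometry and $\iota$ acts on $\Jac(\Gamma)$ by $x\mapsto[D]-x$, one deduces that $\iota$ is an isometry of $\Gamma$; it therefore descends to an involution $i$ of the canonical loopless model $(G,l)$. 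Finally every fiber of $\pi\colon\Gamma\to\Gamma':=\Gamma/\iota$ equals $P+\iota(P)\sim D$, so all fibers are linearly equivalent; pushing forward yields $2P'\sim 2Q'$ on $\Gamma'$ for all $P',Q'$, so the Abel--Jacobi image of $\Gamma'$ lies in the finite group $\Jac(\Gamma')[2]$. Connectedness of $\Gamma'$ forces Abel--Jacobi on $\Gamma'$ to be constant, hence $g(\Gamma')=0$ and $G/i$ is a tree. (For (ii)$\Rightarrow$(iii) one runs the same circle of ideas in reverse: the quotient map $G\to G/i$ is a harmonic morphism of degree $2$, nondegenerate unless $|V(G)|=2$.)

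The step I expect to be the main obstacle is the regularity of the hyperelliptic involution and its compatibility with the finite model: promoting the class-theoretically defined bijection $P\mapsto\iota(P)$ to an honest isometry descending to $(G,l)$, while correctly handling loops (whose midpoints the involution must fix or permute), bridges (where $\iota(P)$ is not even single-valued and the $2$-edge-connected reduction must be installed), and the exceptional banana graphs with $|V(G)|=2$. A second, more routine obstacle is pinning down the notion of harmonic morphism of metric graphs precisely enough that pullback commutes with $\divi(\,\cdot\,)$, and confirming that hyperellipticity is visible on the finite model at all --- i.e.\ that $V(G)$ is a rank-determining set --- so that conditions (ii) and (iii) can be phrased combinatorially.
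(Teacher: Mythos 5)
Your overall skeleton ((i)$\Rightarrow$(ii) via the involution $\iota(P)$ defined by $D-P\sim\iota(P)$, (ii)$\Rightarrow$(iii) via the quotient harmonic morphism, (iii)$\Rightarrow$(i) via pullback of a point from the tree) matches the paper's, and your (iii)$\Rightarrow$(i) and your 2-torsion/pushforward argument that $\Gamma/\iota$ has genus $0$ are workable (the latter is a genuinely different route from the paper, which deliberately avoids Jacobian-theoretic arguments and instead shows directly that every edge of $G/i$ is a cut edge, via Claims~\ref{c:2edges}--\ref{c:allcut}; your route would additionally require proving that pushforward along a harmonic morphism of metric graphs preserves principal divisors, the analogue of Proposition~\ref{p:pushpull}, which the paper states only as a remark). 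However, there is a genuine gap at exactly the point you flag as the main obstacle: your argument that $\iota$ is an isometry rests on the claim that the tropical Abel--Jacobi map is a local isometry, and this is false. By Baker--Faber, the Abel--Jacobi map collapses bridges and scales each edge by its Foster coefficient, which varies from edge to edge; so knowing that $\iota$ induces the isometry $x\mapsto[D]-x$ on $\Jac(\Gamma)$ does not transfer back to $\Gamma$. This is where the paper does its real work: Lemma~\ref{l:slopes} shows that a rational function with divisor $y+y'-x-x'$ has all slopes $\pm1$, with exactly two edges leaving $M(f)$ and two leaving $m(f)$, and Claim~\ref{c:iso} then computes $d(x,y)$ and $d(\iota(x),\iota(y))$ by matching ``paths down'' of constant slope $-1$. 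Your closed-graph argument does give continuity of $\iota$, but continuity is not enough to descend to a length-preserving involution of the canonical loopless model $(G,l)$, which is what (ii) and the quotient construction require.

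The second gap is the bridged case. ``Decomposing $\Gamma$ along its bridges and tracking a Weierstrass-type condition at each attachment point'' is not an argument: what is actually needed is (a) that contracting bridges preserves principal divisors and ranks (Lemma~\ref{l:bridge}, Corollary~\ref{c:bridge}), so hyperellipticity passes to the bridgeless contraction $\Gamma'$, and (b) that the hyperelliptic involution of $\Gamma'$ fixes every cut vertex (Lemma~\ref{l:cut}, whose proof again uses Lemma~\ref{l:slopes}(i)); only then can one extend the involution to $\Gamma$ by fixing each bridge pointwise and check that the quotient is still a tree. Without the cut-vertex statement the extension step has no justification, so as written your proof covers only the 2-edge-connected case of Theorem~\ref{t:2ec}, and even there with the isometry step unproved.
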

\begin{theorem}
Let $g\ge 3$.  The locus of 2-edge-connected genus $g$ tropical hyperelliptic curves is a $(2g-1)$-dimensional stacky polyhedral fan whose maximal cells are in bijection with trees on $g-1$ vertices with maximum valence 3.   (See Figure \ref{f:h23}).
\end{theorem}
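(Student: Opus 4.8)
The plan is to reduce the problem to the combinatorics of degree-$2$ harmonic morphisms via Theorem~\ref{t:mainintro}, classify the metric trees that can occur, and then read off the cell structure, dimension, and maximal cells.

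\emph{Reduction and the no-dilation lemma.} By Theorem~\ref{t:mainintro}, a $2$-edge-connected metric graph $\mg$ of genus $g$ is hyperelliptic exactly when it admits a nondegenerate harmonic morphism $\phi\colon\mg\to\mt$ of degree $2$ onto a metric tree, together with the degenerate case in which the canonical loopless model has two vertices, i.e.\ $\mg$ is a banana graph $B_{g+1}$; the latter gives a cell of dimension $g+1<2g-1$ for $g\ge 3$ and will appear below as a face of a larger cell, so I set it aside. The key structural point is that $2$-edge-connectivity forbids dilated edges: if an edge $e'$ of $\mt$ had a single preimage $e$ with local degree $2$, then deleting $e'$ disconnects the tree, so deleting $e$ disconnects $\mg=\phi^{-1}(\mt)$, contradicting $2$-edge-connectivity. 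Hence every edge of $\mt$ has exactly two preimages; these map isometrically onto it, so they must have equal lengths, and a first-Betti-number count forces $\phi$ to be branched over exactly $g+1$ vertices. Equivalently, a model of $\mg$ is obtained from two copies of a model of $\mt$ glued along these $g+1$ branch vertices, and the hyperelliptic curves lying over a fixed pair $(\mt,\ \text{branch locus})$ form a stratum parametrized by the edge lengths of the model of $\mt$ subdivided at the branch points: a cone $\R^{N}_{\ge 0}$ modulo the finite automorphism group of the pair.

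\emph{Dimension and the bijection with trees.} I would then bound $N$. Since $\mg$ has no valence-$1$ point, every leaf of the model of $\mt$ is a branch point, and since the metric tree $\mt$ has no interior valence-$2$ vertex, every valence-$2$ vertex of the subdivided model is a branch point; as the branch points number $g+1$, this gives $(\#\,\text{leaves})+(\#\,\text{valence-}2\ \text{vertices})\le g+1$. On the other hand $\sum_v(\deg v-2)=-2$ on any tree forces $(\#\,\text{valence-}{\ge}\,3\ \text{vertices})\le(\#\,\text{leaves})-2\le g-1$. Adding these, the subdivided model has at most $2g$ vertices, so $N\le 2g-1$, with equality iff all $g+1$ branch points are leaves, there are no valence-$2$ vertices, and the remaining $g-1$ vertices are internal of valence exactly $3$. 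In the equality case the subtree $\widehat{\mt}$ spanned by the $g-1$ internal vertices is a tree on $g-1$ vertices of maximum valence $\le 3$; conversely any such tree $\widehat{\mt}$ extends uniquely to a valid $\mt$ by attaching $3-\deg_{\widehat{\mt}}(v)$ new leaves at each $v$, and the identity $\sum_v\bigl(3-\deg_{\widehat{\mt}}(v)\bigr)=3(g-1)-2(g-2)=g+1$ produces exactly the required number of branch points. This yields the claimed bijection between maximal cells and trees on $g-1$ vertices of maximum valence $3$, and shows the locus has dimension $2g-1$.

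\emph{The fan structure, and the main obstacle.} To conclude I would assemble the strata into a stacky polyhedral fan: for each pair $(\mt,\ \text{branch locus})$ above take $\R^{N}_{\ge 0}$ modulo automorphisms, with face maps given by contracting an edge of $\mt$; such a contraction induces a symmetric contraction of a pair of edges of the glued graph, which is a genuine face relation in $\Mtrg$ (contraction preserves genus and $2$-edge-connectivity), so these cones glue into a stacky polyhedral fan, and the construction ``double $\mt$ along the branch locus, then smooth valence-$2$ vertices'' defines a morphism from it to $\Mtrg$ whose image is the $2$-edge-connected hyperelliptic locus (the banana graphs $B_k$, and curves with loops or positive vertex weights, occurring on the boundary). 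The step I expect to be the main obstacle is showing this morphism is an isomorphism onto its image, i.e.\ injective modulo the stacky structure: one must recover $\mt$ and its branch locus canonically from a $2$-edge-connected hyperelliptic tropical curve $\mg$. This should reduce to uniqueness of the degree-$2$ harmonic morphism --- equivalently uniqueness of the hyperelliptic involution $i$ --- for $g\ge 2$ outside the banana graphs, so that $\mt=\mg/i$ is intrinsic and the branch locus is its ramification; the remaining verification of automorphism groups and of the low-dimensional boundary strata is routine but delicate.
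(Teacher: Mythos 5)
Your overall strategy (classify the degree-$2$ harmonic morphisms directly, then count edge lengths of the quotient tree subdivided at the branch points) is a legitimate route and differs from the paper's, which builds a general formalism of constrained types closed under contraction and then pins down the maximal cells by local splitting moves in the proof of Theorem~\ref{t:ladder}. However, there is a genuine gap at your central structural claim. From ``no dilated edges'' you conclude, with the word ``equivalently,'' that a model of $\mg$ is two copies of a model of $\mt$ glued along $g+1$ branch vertices, and that each stratum is parametrized by the edge lengths of $\mt$ subdivided at the branch points. The no-dilation argument only controls the edges of $\mg$ that map onto edges of $\mt$; it says nothing about the fibers over vertices of $\mt$, i.e.\ the vertical edges of the harmonic morphism furnished by Theorem~\ref{t:mainintro}. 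These are not a degenerate side case: for the morphism obtained by quotienting by the hyperelliptic involution, the rungs of the ladders --- the very maximal cells you are after --- are vertical edges collapsed to vertices of the tree, so ``every edge of $\mt$ has exactly two preimages'' does not by itself yield your glued-double description, and your first-Betti-number count of $g+1$ branch points presupposes exactly the structure it is being used to justify.

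To close the gap you must analyze the hyperelliptic involution $i$ itself: show it fixes no edge pointwise (this is where $2$-edge-connectivity enters, cf.\ Claim~\ref{c:2edges}); show it never interchanges two parallel edges joining $x$ and $i(x)$ --- if it did, the topological quotient would contain a circle and $\mg$ would not be a double of a tree branched at $g+1$ points; ruling this out needs a separate linear-equivalence argument (moving a point along such a swapped pair changes the class of $x+i(x)$), and your sketch never mentions it; and show the union of horizontal edges is two disjoint copies of $\mt$, using that a tree is simply connected, as the paper does inside the proof of Theorem~\ref{t:ladder}. Once that structure theorem is in place, your counting bound $N\le 2g-1$, the equality analysis, and the bijection with trees on $g-1$ vertices of maximum valence $3$ are correct (your equality case reproduces exactly the ladders $L(T)$), and your appeal to uniqueness of the involution (Corollary~\ref{c:5.14}) for injectivity matches the paper's own argument. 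So the main missing piece is not the injectivity step you flagged, but the unproved branched-double structure theorem on which your entire dimension count rests.
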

\begin{theorem}
\label{t:skeletonintro}
Let $X \subseteq \mathbb{T}^2$ be a hyperelliptic curve of genus $g \geq 3$ over a complete nonarchimedean valuated field $K$, and suppose $X$ is defined by a polynomial of the form $P=y^2+f(x)y+h(x)$. Let $\widehat{X}$ be its smooth completion.  Suppose the Newton complex of $P$ is a unimodular subdivision of the triangle with vertices $(0,0), (2g+2,0),$ and $(0,2)$, and suppose that the core of $\Trop X$ is bridgeless. Then the skeleton $\Sigma$ of the Berkovich analytification  $\widehat{X}^{\operatorname{an}}$ is a standard ladder of genus $g$ whose opposite sides have equal length.  (See Figure~\ref{f:newton}).
\end{theorem}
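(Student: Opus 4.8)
The plan is to translate the statement, via nonarchimedean geometry, into a purely combinatorial and metric assertion about the core of the plane tropical curve $\Trop X$, and then to read that assertion off from the two special features of the hypothesis: the unimodular triangulation of $T$, and the fact that $P$ is quadratic in $y$. \emph{Step 1 (reduce to the core of $\Trop X$).} Since the Newton complex of $P$ is a unimodular triangulation of $T$, the plane tropical curve $\Trop X$ is smooth: every vertex is trivalent, and its bounded regions are dual to the interior lattice points of $T$, of which there are $g$, so $\Trop X$ has genus $g$. I would then invoke the comparison between the Berkovich skeleton of an analytic curve and its tropicalization (as developed by Baker--Payne--Rabinoff): for a smooth plane tropical curve whose core is bridgeless, the retraction $\widehat X^{\mathrm{an}} \to \Sigma$ identifies $\Sigma$, as a metric graph, with the core of $\Trop X$ taken with lattice lengths. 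It then remains to show that this core is a standard ladder of genus $g$ whose top and bottom edges agree in pairs.

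\emph{Step 2 (the core is a standard ladder).} The interior lattice points of $T$ are exactly $(1,1),\dots,(g,1)$, and they are collinear; this is what pins down the combinatorial type. First, the part of the triangulation lying above the line $y=1$ is forced to be the fan from the apex $(0,2)$ --- the triangle with vertices $(0,1),(g+1,1),(0,2)$ admits no other unimodular triangulation --- so every unit segment $[(i,1),(i+1,1)]$ is an edge and each interior point is joined to $(0,2)$. Dually, each bounded region $F_i$ acquires a single ``top edge'', consecutive regions $F_i,F_{i+1}$ share exactly one edge (a ``rung''), and since $(i,1)$ and $(j,1)$ with $|i-j|\ge 2$ can never be joined, the regions $F_1,\dots,F_g$ form a chain. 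Below $y=1$ the triangulation varies with $f$ and $h$, but a determinant computation shows that each $(i,1)$ is joined downward only to a block of \emph{consecutive} points on $y=0$; the triangles of such a block are dual to vertices of $\Trop X$ that carry unbounded legs, hence become $2$-valent once the legs are pruned to form the core, and the whole block collapses to a single ``bottom edge''. A short check of the two extreme regions shows that $F_1$ and $F_g$ likewise lose their outermost corners to pruning. After suppressing $2$-valent vertices one is left with a chain of quadrilaterals --- with the two end ones degenerated to bigons --- glued successively along rungs: this is precisely the standard ladder of genus $g$, i.e.\ the maximal cell indexed by the path $P_{g-1}$ in the second of the three theorems above.

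\emph{Step 3 (opposite sides have equal length).} The hyperelliptic involution $\iota$ of $\widehat X$ acts on $\widehat X^{\mathrm{an}}$ and preserves $\Sigma$, acting there by an isometry; on $\Trop X$ it is the piecewise-affine, lattice-length-preserving map $(X,Y)\mapsto(X,\ \trop h(X)-Y)$, which exchanges the two sheets of the tropical quadratic $Y^2+\trop f(X)\,Y+\trop h(X)$. Its quotient $\Sigma/\iota$ is a tree (the skeleton of $\widehat X/\iota\cong\P^1$; alternatively by Theorem~\ref{t:mainintro}), and among the isometric involutions of a genus-$g$ ladder the only one with tree quotient is the ``vertical flip'' exchanging the top path with the bottom path. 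That flip carries the top edge of each square onto its bottom edge, so the two are equal; equivalently one reads $\ell(t_i)=\ell(b_i)$ directly off the fact that $Y\mapsto\trop h(X)-Y$ maps the bottom of $F_i$ isometrically onto its top.

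The crux is Step 2: one must check, uniformly over \emph{all} unimodular triangulations of the trapezoid below $y=1$, that passing to the core and suppressing $2$-valent vertices really produces the ladder --- that every ``bottom'' collapses to one edge, that the edges shared by consecutive regions are genuine rungs rather than bridges, and that the two extreme regions degenerate exactly as claimed --- and then to match the resulting labelled metric graph with the ``standard ladder'' cell. The remaining metric bookkeeping in Step 3 (expressing each edge length through valuations of the coefficients of $f$ and $h$) is routine, but it is the place where the hypothesis that the Newton polygon is all of $T$ --- forcing $\deg f=g+1$ and $\deg h=2g+2$, hence two genuine sheets rather than a collapsed one --- gets used.
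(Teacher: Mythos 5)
The crux of your argument --- and its gap --- is the first claim of Step 2. You assert that unimodularity alone forces the part of the Newton complex above the line $y=1$ to be the fan from $(0,2)$, ``since the triangle with vertices $(0,1),(g+1,1),(0,2)$ admits no other unimodular triangulation.'' That reasoning presupposes that the Newton complex restricts to a triangulation of that top triangle, which is exactly what is not automatic: a unimodular triangulation of $\Delta_{2g+2,2}$ may contain an edge $\{(0,2),(k,0)\}$ with $k$ odd, $3\le k\le 2g-1$ (for instance the triangle with vertices $(0,2),(3,0),(1,1)$ is unimodular, and such a subdivision extends to a unimodular triangulation of the whole polygon). In that case some segment $\{(i,1),(i+1,1)\}$ is absent, the region above $y=1$ is not a union of faces, and the dual curve is not a ladder at all --- the edge dual to $\{(0,2),(k,0)\}$ is a bridge of the core. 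So the combinatorial shape is forced only by unimodularity \emph{together with} the hypothesis that the core of $\Trop X$ is bridgeless, which is precisely what rules out every edge from $(0,2)$ to the $x$-axis that separates the interior lattice points nontrivially. Your proposal uses bridgelessness only as a hypothesis for the Baker--Payne--Rabinoff comparison in Step 1 and never in the combinatorial analysis, so as written the ladder structure is not established; this is the missing idea (and it is how the paper's proof proceeds). Once that exclusion is in place, your chain-of-faces analysis of the height-$1$ trapezoid below $y=1$, including the collapsing of the $2$-valent vertices dual to the ``up'' triangles, does go through.

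Two further remarks on Step 3. Your involution route is a genuinely different (and attractive) way to get the length equality from the paper's, which argues directly: the rungs are vertical, so $V_i$ and $W_i$ have the same first coordinate, and since every segment of the top and bottom paths has integer slope, lattice length equals horizontal displacement, whence $d(V_i,V_{i+1})=d(W_i,W_{i+1})$. But the explicit claim that the hyperelliptic involution acts on the embedded curve by $(X,Y)\mapsto(X,\trop h(X)-Y)$ is not justified in general (tropicalization of $y\mapsto -y-f(x)$ does not act on $\Trop X$ pointwise where cancellation occurs), so you should lean on your alternative: $\Sigma$ is a hyperelliptic metric graph (specialization), Theorem \ref{t:mainintro} gives an involution of its canonical loopless model with tree quotient, and then you must actually prove the classification claim that every such involution of a genus-$g$ ladder exchanges the top and bottom edge of each square --- a finite check, but currently only asserted.
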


We begin in Section 2 by giving new definitions of harmonic morphisms and quotients of metric graphs, and recalling the basics of divisors on tropical curves. In Section 3, we prove the characterization of hyperellipticity stated in Theorem \ref{t:mainintro}.  This generalizes a central result of \cite{bn} to metric graphs.  See also \cite[Proposition 45]{hmy} for a proof of one of the three parts.
In Section 4, we build the space of hyperelliptic tropical curves and the space of 2-edge-connected hyperelliptic tropical curves. 
We then explicitly compute the hyperelliptic loci in $\Mtrg$ for $g=3$ and $g=4$. See Figures \ref{f:h3} and \ref{f:h23}. Note that all genus 2 tropical curves are hyperelliptic, as in the classical case.  
Finally, in Section 5, we establish a connection to embedded tropical curves in the plane, of the sort shown in Figure \ref{f:newton}, and prove Theorem \ref{t:skeletonintro}.

Our work in Section 5 represents a first step in studying the behavior of hyperelliptic curves under the map
\[ \trop: M_g(K) \to \Mtrg \]
in \cite[Remark 5.51]{bpr}, which sends a curve $X$ over a algebraically closed, complete nonarchimedean field $K$ to its Berkovich skeleton, or equivalently to the dual graph, appropriately metrized, of the special fiber of a semistable model for $X$.  Note that algebraic curves that are not hyperelliptic can tropicalize to hyperelliptic curves \cite[Example 3.6]{spec}, whereas tropicalizations of hyperelliptic algebraic curves are necessarily hyperelliptic \cite[Corollary 3.5]{spec}. We conjecture that the locus of hyperelliptic algebraic curves of genus $g$ maps surjectvely onto the tropical hyperelliptic locus.  It would be very interesting to study further the behavior of hyperelliptic loci, and higher Brill-Noether loci, under the tropicalization map above.

\medskip

\noindent {\bf Acknowledgments.}  The author thanks M.~Baker, J.~Rabinoff, and B.~Sturmfels for helpful comments, J.~Harris for suggesting Problem~\ref{p:bn}, R.~Masuda and J.~Rodriguez for much help with typesetting, and Sherrelle at Southwest Airlines for reuniting her with her notes for this paper.  The author is supported by a Graduate Research Fellowship from the National Science Foundation.

\begin{figure}
\includegraphics[height=1.5in]{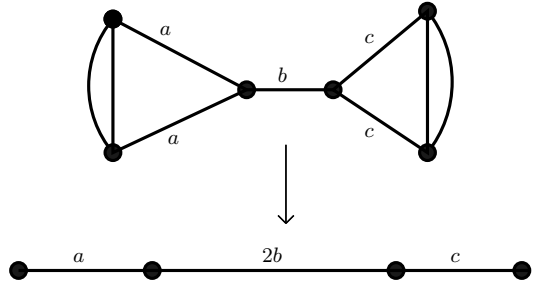}%
\caption{A harmonic morphism of degree two.  Here, $a,b$, and $c$ are positive real numbers.}
\label{f:main}
\end{figure}

\section{Definitions and notation}

We start by defining harmonic morphisms, quotients, divisors, and rational functions on metric graphs.  These concepts will be used in Theorem \ref{t:main} to characterize hyperellipticity.

\sbs{Metric graphs and harmonic morphisms}
Throughout, all of our graphs are connected, with loops and multiple edges allowed.
A {\bf metric graph} is a metric space $\mg$ such that there exists a graph $G$ and a length function $$l:E\left(G\right)\rightarrow\mathbb{R}_{>0}$$
so that $\mg$ is obtained from $(G,l)$ by gluing intervals $\left[0,l\left(e\right)\right]$ for $e\in E\left(G\right)$ at their endpoints, as prescribed by the combinatorial data of $G$.  The distance $d\left(x,y\right)$ between two points $x$ and $y$ in $\mg$ is given by the length of the shortest path between them.
We say that $(G,l)$ is a {\bf model} for $\mg$ in this case.  We say that $(G,l)$ is a loopless model if $G$ has no loops.  
Note that a given metric graph $\mg$ admits many possible models $(G,l)$.  For example, a line segment of length $a$ can be subdivided into many edges whose lengths sum to $a$.
The reason for working with both metric graphs and models is that metric graphs do not have a preferred vertex set, but choosing some vertex set is convenient for making combinatorial statements. 

Suppose $\mg$ is a metric graph, and let us exclude, once and for all, the case that $\mg$ is homeomorphic to the circle $S^1$.  We define the {\bf valence} $\val(x)$ of a point $x\in\mg$ to be the number of connected components in $U_x\setminus\{x\}$ for any sufficiently small neighborhood $U_x$ of $x$.  Hence almost all points in $\mg$ have valence 2.  By a {\bf segment} of $\mg$ we mean a subset $s$ of $\mg$ isometric to a real closed interval, such that any point in the interior of $s$ has valence 2.

If $V\subseteq \mg$ is a finite set which includes all points of $\mg$ of valence different from 2, then define a model $(G_V,l)$ as follows.  The vertices of the graph $G_V$ are the points in $V$, and the edges of $G_V$ correspond to the connected components of $\mg\setminus V$. These components are necessarily isometric to open intervals, the length of each of which determines the function $l:E\left(G_V\right)\rightarrow\mathbb{R}_{>0}$.  Then $(G_V,l)$ is a model for $\mg$.

The {\bf canonical model} $(G_0,l)$ for a metric graph $\mg$ is the model obtained by taking
$$V=\{x\in\mg~:~\val(x)\ne 2\}.$$
The {\bf canonical loopless model} $(G_-,l)$ for the metric graph $\mg$ is the model obtained from the canonical model by placing an additional vertex at the midpoint of each loop edge.  Thus, if a vertex $v\in V(G_-)$ has degree 2, then the two edges incident to $v$ necessarily have the same endpoints and the same length.

Suppose $(G,l)$ and $(G',l')$ are loopless models for metric graphs $\mg$ and $\mg'$, respectively.  A {\bf morphism of loopless models} $\phi:(G,l)\rightarrow (G',l')$ is a map of sets
$$V\left(G\right)\cup E\left(G\right)\overset{\phi}{\rightarrow}V\left(G'\right)\cup E\left(G'\right)$$
such that
\begin{enumerate}
	\item $\phi(V(G))\subseteq V(G')$,
	\item if $e=xy$ is an edge of $G$ and $\phi\left(e\right)\in V\left(G'\right)$ then $\phi\left(x\right)=\phi\left(e\right)=\phi\left(y\right)$,
	\item if $e=xy$ is an edge of $G$ and $\phi(e)\in E\left(G'\right)$ then $\phi(e)$ is an edge between $\phi(x)$ and $\phi(y)$, and
	\item if $\phi\left(e\right)=e'$ then $l'\left(e'\right)/l\left(e\right)$ is an integer.
\end{enumerate}
For simplicity, we will sometimes drop the length function from the notation and just write $\phi:G\rightarrow G'$.  An edge $e\in E(G)$ is called {\bf horizontal} if $\phi(e)\in E\left(G'\right)$ and {\bf vertical} if $\phi\left(e\right)\in V\left(G'\right)$.

Now, the map $\phi$ induces a a map $\tilde{\phi}: \mg\rightarrow\mg'$ of topological spaces in the natural way. In particular, if $e\in E\left(G\right)$ is sent to $e'\in E\left(G'\right)$, then we declare $\tilde{\phi}$ to be linear along $e$.  Let
$$\mu_\phi(e) = l'(e')/l(e)\in\Z$$
denote the slope of this linear map.

A morphism of loopless models $\phi:(G,l)\rightarrow (G',l')$ is said to be {\bf harmonic} if for every $x\in V\left(G\right)$,
the nonnegative integer
$$m_\phi(x)  =\sum_{\substack{e\in E(G) \\ x\in e,~\phi(e)=e'}} \!\!\! \mu_\phi(e)$$
is the same over all choices of $e'\in E\left(G'\right)$ that are incident to the vertex $\phi(x)$.  The number $m_\phi(x)$ is called the {\bf horizontal multiplicity} of $\phi$ at $x$.
We say that $\phi$ is {\bf nondegenerate} if $m_{\phi}\left(x\right)>0$ for all $x\in V(G)$.
The {\bf degree} of 
$\phi$ is defined to be
$$\deg\phi=\sum_{\substack{e\in E(G) \\ \phi(e)=e'}}\mu_\phi(e)$$
for any $e'\in E\left(G'\right)$.
One can check that the number $\deg\phi$ does not depend on the choice of $e'$ (\cite[Lemma 2.4]{bn}, \cite[Lemma 2.12]{ura}). If $G'$ has no edges, then we set $\deg\phi=0$.

We define a {\bf morphism of metric graphs} to be a continuous map $\tilde{\phi}: \mg\rightarrow\mg'$ which is induced from a morphism $\phi:(G,l)\rightarrow (G',l')$ of loopless models, for some choice of models $(G,l)$ and $(G',l')$.  It is {\bf harmonic} if $\phi$ is harmonic as a morphism of loopless models, and we define its degree $\deg(\tilde{\phi})$ = $\deg(\phi)$.  For any point $x\in \mg$, we define the {\bf horizontal multiplicity} of $\tilde{\phi}$ at $x$ as
$$m_{\tilde{\phi}}(x)=\begin{cases}
m_\phi(x) &\textrm{ if } x\in V(G)\\
0 &\textrm{ if } x\in e^\circ \textrm{ and }\phi(e)\in V(G')\\
\mu_\phi(e) &\textrm{ if } x\in e^\circ \textrm{ and }\phi(e)\in E(G').
\end{cases}$$
Here, $e^\circ$ denotes the interior of $e$. 
One can check that the harmonicity of $\tilde{\phi}$ is independent of choice of $\phi$, as are the computations of $\deg(\tilde{\phi})$ and $m_\phi(x)$.

\begin{figure}[t]
\includegraphics[width=\columnwidth,angle=0,scale=1.1]{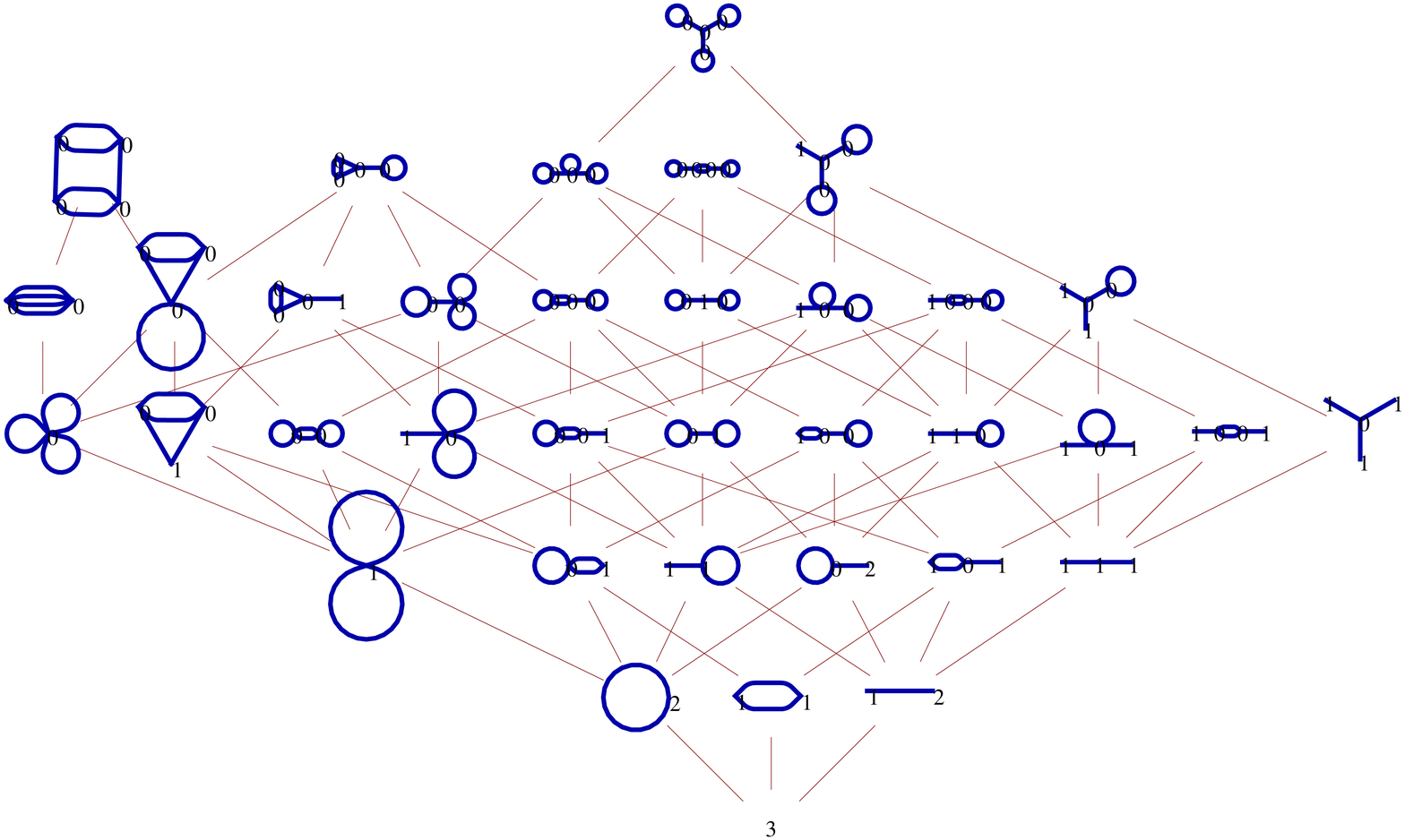}%
\vskip -.6in
  \caption{The tropical hyperelliptic curves of genus 3. Here, edges that form a 2-edge-cut are required to have the same length.  The space of such curves sits inside the moduli space $M^{tr}_3$ shown in \cite[Figure 1]{ch}.}
  \label{f:h3}
\end{figure}

\begin{figure}
\includegraphics[height=5in,angle=0,scale=1.1]{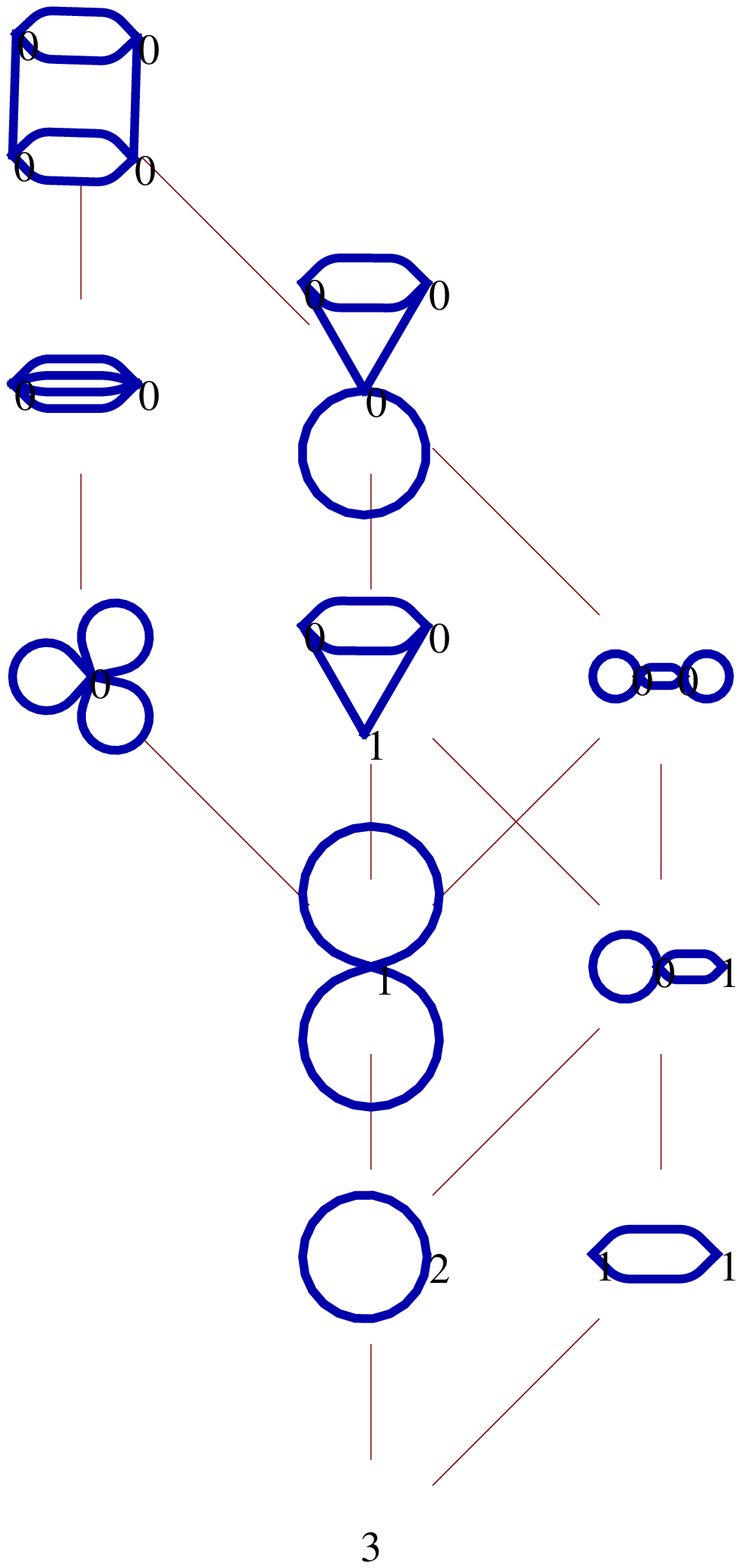}%
\vskip -.6in
  \caption{The 2-edge-connected tropical hyperelliptic curves of genus 3. Here, edges that form a 2-edge-cut are required to have the same length. The space of such curves sits inside the moduli space $M^{tr}_3$ shown in \cite[Figure 1]{ch}.}
  \label{f:h23}
\end{figure}

\sbs{Automorphisms and quotients}
Let $(G,l)$ be a loopless model for a metric graph $\mg$.  An {\bf automorphism} of $(G,l)$ is a harmonic morphism $\phi:(G,l)\rightarrow (G,l)$ of degree 1 such that the map $V\left(G\right)\cup E\left(G\right)\rightarrow V\left(G\right)\cup E\left(G\right)$ is a bijection.  Equivalently, an automorphism of $(G,l)$ is an automorphism of $G$ that also happens to preserve the length function $l$.  The group of all such automorphisms is denoted $\Aut(G,l)$.

An automorphism of $\mg$ is an isometry $\mg\rightarrow\mg$.  We assume throughout that $\mg$ is not a circle, so the automorphisms $\Aut(\mg)$ of $\mg$ form a finite group, since they permute the finitely many points of $\mg$ of valence $\ne2$.  Then automorphisms of $\mg$ correspond precisely to automorphisms of the canonical loopless model $(G_-,l)$ for $\mg$.
An automorphism $i$ is an {\bf involution} if $i^{2}=\id$.

Let $(G,l)$ be a loopless model, and let $K$ be a subgroup of $\Aut(G,l)$. We will now define the {\bf quotient loopless model} $(G/K,l')$.  The vertices of $G/K$ are the $K$-orbits of $V\left(G\right)$, and the edges of $G/K$ are the $K$-orbits of those edges $xy$ of $G$ such that $x$ and $y$ lie in distinct $K$-orbits. If $e\in E(G)$ is such an edge, let $[e]\in E(G/K)$ denote the edge corresponding to its $K$-orbit.  
The length function $$l':E\left(G/K\right)\rightarrow\mathbb{R}_{>0}$$
is given by 
$$l'\left(\left[e\right]\right)=l\left(e\right)\cdot|\Stab(e)|$$ 
for every edge $e\in E\left(G\right)$ with $K$-inequivalent ends. Notice that $l$ is well-defined on $E\left(G/K\right)$.  Note also that $(G/K,l')$ is a loopless model.  

\begin{remark}
If $K$ is generated by an involution $i$ on $(G,l)$, then computing the length function $l'$ on the quotient $G/K$ is very easy.  By definition, edges of $G$ in orbits of size 2 have their length preserved, edges of $G$ flipped by $i$ are collapsed to a vertex, and edges of $G$ fixed by $i$ are stretched by a factor of 2.  See 
Figure~\ref{f:main}.
\end{remark}

The definition of the quotient metric graph follows the definition in \cite{bn} in the case of nonmetric graphs, but has the seemingly strange property that edges can be stretched by some integer factor in the quotient.  The reason for the stretching is that it allows the natural quotient morphism to be harmonic.  Indeed, let $(G,l)$ be a loopless model, let $K$ be a subgroup of $\Aut(G,l)$, and let  $(G/K,l')$ be the quotient.  Define a morphism of loopless models
$$\pi_{K}:(G,l)\rightarrow (G/K,l')$$
as follows.  If $v\in V(G)$, let $\pi_K(v) = [v]$.  If $e\in E(G)$ has $K$-equivalent ends $x$ and $y$, then let $\pi_K(e)=[x]=[y]$.   If $e\in E(G)$ has $K$-inequivalent ends, then let $\pi_K(e)=[e]$.

\begin{lemma}\label{l:qharmonic}
Let $(G,l)$ be a loopless model, let $K$ be a subgroup of $\Aut(G,l)$, and let  $(G/K,l')$ be the quotient. Then the quotient morphism $\pi_{K}:(G,l)\rightarrow (G/K,l')$ constructed above is a harmonic morphism.  If the graph $G/K$ does not consist of a single vertex, then $\pi_K$ is nondegenerate of degree 
$|K|$.
\end{lemma}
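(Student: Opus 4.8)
The plan is to check directly that $\pi_K$ satisfies the four axioms for a morphism of loopless models, and then to pin down both the horizontal multiplicities $m_{\pi_K}(x)$ and the degree with a single orbit-counting argument. Axioms (1)--(3) are immediate from the construction of $\pi_K$ and of $(G/K,l')$: vertices map to vertices, an edge with $K$-equivalent ends collapses onto the common image vertex, and an edge $e=xy$ with $K$-inequivalent ends maps to $[e]$, which by construction joins $[x]=\pi_K(x)$ and $[y]=\pi_K(y)$. For axiom (4), a horizontal edge $e$ has $\mu_{\pi_K}(e)=l'([e])/l(e)=|\Stab_K(e)|\in\Z_{>0}$ straight from the definition of $l'$. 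The one structural fact I would extract here, and use twice below, is that every edge $e=xy$ surviving into $G/K$ has its two ends in distinct $K$-orbits, so no element of $K$ interchanges them; hence every element of $\Stab_K(e)$ fixes both $x$ and $y$, and in particular $\Stab_K(e)\le\Stab_K(x)$.

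For harmonicity I would fix $x\in V(G)$, set $K_x=\Stab_K(x)$, and fix an edge $e'\in E(G/K)$ incident to $[x]$ (if there is none, harmonicity at $x$ is vacuous). The key claim is that the horizontal edges of $G$ at $x$ mapping to $e'$ form a single $K_x$-orbit inside the star of $x$: this set is $K_x$-stable and nonempty, and if $k\in K$ carries one such edge to another, then $kx$ is an end of an edge lying over $e'$, hence $kx\in[x]$; but the two ends of that edge lie in different $K$-orbits, so in fact $kx=x$, i.e.\ $k\in K_x$. Granting this, and using $\Stab_K(e)=\Stab_{K_x}(e)$ from the previous paragraph together with the orbit--stabilizer theorem applied to the $K_x$-orbit, I obtain
\[
m_{\pi_K}(x)=\sum_{e\ni x,\ [e]=e'}\mu_{\pi_K}(e)=\sum_{e\ni x,\ [e]=e'}|\Stab_{K_x}(e)|=|K_x|,
\]
which is independent of $e'$. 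This shows $\pi_K$ is harmonic, and since $|K_x|\ge 1$ it is nondegenerate at every $x$ whose image $[x]$ is not isolated in $G/K$ --- which, when $G/K$ is not a single vertex, is every $x$, since $G/K$ is the image of the connected graph $G$ under the surjection $\pi_K$ and is therefore connected.

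The degree computation is the same idea one level up. Assuming $G/K$ is not a single vertex, it is connected with at least two vertices and hence has some edge $e'$; the edges of $G$ mapping to $e'$ are by definition a single $K$-orbit $T$, whose members have $K$-conjugate, hence equinumerous, stabilizers, and orbit--stabilizer gives
\[
\deg\pi_K=\sum_{e\in T}\mu_{\pi_K}(e)=\sum_{e\in T}|\Stab_K(e)|=|T|\cdot\frac{|K|}{|T|}=|K|.
\]
I expect the only genuinely delicate point to be the claim in the second paragraph that the horizontal edges at $x$ over a fixed $e'$ form a single $\Stab_K(x)$-orbit rather than merely a single $K$-orbit: this is exactly what forces the vertex sum of slopes to equal the constant $|\Stab_K(x)|$ for every choice of $e'$, and it hinges precisely on the fact that the construction of $G/K$ discards the edges with $K$-equivalent ends.
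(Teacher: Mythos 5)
Your proof is correct and follows essentially the same route as the paper: identify $\mu_{\pi_K}(e)=|\Stab(e)|$, apply orbit--stabilizer to the action of $\Stab(x)$ on the edges at $x$ over a fixed $e'$ to get $m_{\pi_K}(x)=|\Stab(x)|$, apply it again to the $K$-orbit of edges over $e'$ to get degree $|K|$, and deduce nondegeneracy from connectedness of $G/K$. The only difference is that you explicitly justify the transitivity of the $\Stab(x)$-action (using that surviving edges have $K$-inequivalent ends), a point the paper simply asserts.
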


\begin{proof}
Let $x\in V(G)$. Then for all $e'\in E(G/K)$ incident to $\pi_K(x)$, we have
$$\sum_{\substack{e\in E(G) \\ x\in e,~\phi(e)=e'}} \!\!\!\!\!\! \mu_\phi(e)
= \!\!\! \sum_{\substack{e\in E(G) \\ x\in e,~\phi(e)=e'}} \!\!\! |Stab(e)|
= |Stab(x)|$$
is indeed independent of choice of $e'$.  The last equality above follows from applying the Orbit-Stabilizer formula to the transitive action of $Stab(x)$ on the set $\{e\in E(G)~:~ x\in e,~\phi(e)=e'\}$. Furthermore, for any $e'\in E(G/K)$, we have
$$\deg(\pi_K) = \sum_{\substack{e\in E(G) \\ \phi(e)=e'}}  \frac{l'(e')}{l(e)}
= \sum_{\substack{e\in E(G) \\ \phi(e)=e'}}  |Stab(e)|
= |K|,$$
where the last equality again follows from the Orbit-Stabilizer formula.

Finally, suppose the graph $G/K$ is not a single vertex.  Then every $x'\in V(G/K)$ is incident to some edge $e'$.  Therefore, any $x\in V(G)$ with $\pi_K(x)=x'$ must be incident to some $e\in E(G)$ with $\pi_K(e)=e'$, which shows that $m_{\pi_K}(x)\ne 0$.  Hence $\pi_K(x)$ is nondegenerate.
\end{proof}

Figure~\ref{f:main} shows an example of a quotient morphism which is harmonic by Lemma \ref{l:qharmonic}. Here, the group $K$ is generated by the involution which flips the loopless model $G$ across its horizontal axis.  The quotient $G/K$ is then a path.  We will see in Theorem \ref{t:main} that $G$ is therefore hyperelliptic.

\sbs{Divisors on metric graphs and tropical curves}

The basic theory of divisors on tropical curves that follows is due to \cite{bn07} in the non-metric case and to \cite{gk}, \cite{mz} in the metric case. Let $\mg$ be a metric graph.
Then the {\bf divisor group} $\Div(\mg)$ is the group of formal $\mathbb{Z}$-sums of points of $\mg$.~If
$$D=\sum_{x\in \mg}D(x)\cdot x,\, D(x)\in\mathbb{Z}$$
is an element of $\Div(\mg)$,
then define the {\bf degree} of $D$ to be
$$\deg D=\sum_{x\in \mg}D(x)\in\Z.$$
Denote by $\Div^{0}(\mg)$ the subgroup of divisors of degree 0.

A {\bf rational function} on $\mg$ is a function $f:\mg\rightarrow\mathbb{R}$ that is continuous and piecewise-linear, with integer slopes along its domains of linearity. If $f$ is a rational function on $\mg$, then define the divisor $\divi f$ associated to it as follows: at any given point $x\in \mg$, let $(\divi f)(x)$ be the sum of all slopes that $f$ takes on along edges emanating~from~$x$.

The group of {\bf principal divisors} is defined to be $$\Prin(\mg)=\{ \divi f~:~ f\text{ a rational function on }\mg\}.$$
One may check that $\Prin(\mg)\subseteq\Div^{0}(\mg)$.
Then the {\bf Jacobian} $\Jac(\mg)$ is defined to be $$\Jac(\mg)=\frac{\Div^{0}(\mg)}{\Prin(\mg)}.$$
Note that $\Jac(\mg)=1$ if and only if $\mg$ is a tree; this follows from \cite[Corollary 3.3]{bf}.

A divisor $D\in\Div (\mg)$ is {\bf effective}, and we write $D\geq0$, if $D=\sum_{x\in \mg}D(x)\cdot x$ with $D(x)\in\mathbb{Z}_{\geq0}$ for all $x\in X$.
We say that two divisors $D$ and $D'$ are {\bf linearly equivalent}, and we write $D\sim D'$, if $D-D'\in\Prin(\mg)$.  Then
the {\bf rank} of a divisor $D$ is defined to be
$$\max\{ k\in\Z\,:\,\text{for all }E\geq0\text{ of degree $k$, there exists }E'\geq0 \text{ such that }D-E\sim E'\,\}.$$

\begin{definition}
A metric graph is {\bf hyperelliptic} if it has a divisor of degree $2$ and rank $1$.
\end{definition}

Let us extend the above definition to abstract tropical curves. The definition we will give is due to \cite{abc}.  First, we recall:

\begin{definition}
An {\bf abstract tropical curve} is a triple $(G,w,l)$, where $(G,l)$ is a model for a metric graph $\mg$, and
$$w:V(G)\rightarrow\mathbb{Z}_{\geq0}$$
is a weight function satisfying a stability condition as follows: every vertex $v$ with $w(v)=0$ has valence at least $3$.
\end{definition}

Now given a tropical curve $(G,w,l)$ with underlying metric graph $\mg$, let $\mg^{w}$ be the metric graph obtained from $\mg$ by adding at each vertex $v\in V(G)$ a total of $w(v)$ loops of any lengths.

\begin{definition}\label{d:thyp}
An abstract tropical curve  $(G,w,l)$ with underlying metric graph $\mg$ is {\bf hyperelliptic} if $\mg^w$ is hyperelliptic, that is, if $\mg^w$ admits a divisor of degree $2$ and rank $1$.
\end{definition}

\begin{remark}
We will see in Theorem \ref{t:main} that the definition of hyperellipticity does not depend on the lengths of the loops that we added to $\mg$.
\end{remark}

\sbs{Pullbacks of divisors and rational functions}

Let $\phi:\mg\rightarrow \mg'$ be a harmonic morphism, and let $g:\mg'\rightarrow\mathbb{R}$ be a rational function.  The {\bf pullback} of $g$ is the function $\phi^{*}g:\mg\rightarrow\mathbb{R}$ defined by $$\phi^{*}g=g\circ\phi.$$  One may check that  $\phi^{*}g$ is again a rational function.
The {\bf pullback} map on divisors
$$\phi^{*}:\Div \mg'\rightarrow\Div \mg$$
is defined as follows: given $D'\in\Div\mg'$, let $$\left(\phi^{*}\left(D'\right)\right)\left(x\right)=m_{\phi}\left(x\right)\cdot D'\left(\phi(x)\right)$$ for all $x\in \mg$.
The following extends \cite[Proposition 4.2(ii)]{bn} to metric graphs.  It will be used in the proof of Theorem \ref{t:2ec}.

\begin{proposition} \label{p:pushpull}Let $\phi:\mg\rightarrow \mg'$ be a harmonic morphism of metric graphs, and let $g:\mg'\rightarrow\mathbb{R}$ be a rational function. Then $$\phi^{*}\divi g  =  \divi\phi^{*}g.$$
\end{proposition}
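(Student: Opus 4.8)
The plan is to verify the identity $(\phi^*\operatorname{div} g)(x) = (\operatorname{div}\phi^* g)(x)$ pointwise on $\Gamma$, splitting into cases according to the position of $x$ relative to chosen loopless models $(G,l)$ and $(G',l')$ realizing $\phi$ as a morphism of loopless models. First I would fix such models, refining them if necessary so that $g$ is linear on each edge of $G'$; then $\phi^* g = g\circ\phi$ is automatically linear on each edge of $G$, so both sides make sense and are computed edge by edge. The heart of the argument is a local slope computation: if $e$ is a horizontal edge of $G$ with $\phi(e)=e'\in E(G')$, then the slope of $\phi^* g$ along $e$ (oriented away from an endpoint $x$) equals $\mu_\phi(e)$ times the slope of $g$ along $e'$ (oriented away from $\phi(x)$), by the chain rule together with the fact that $\tilde\phi$ is linear with slope $\mu_\phi(e)$ on $e$; if $e$ is vertical, then $\phi^* g$ is constant on $e$ and contributes slope $0$.

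Next I would evaluate $(\operatorname{div}\phi^* g)(x)$ at an arbitrary point $x\in\Gamma$. When $x$ lies in the interior of a horizontal edge $e$ with $\phi(e)=e'$, the outgoing slopes of $\phi^* g$ at $x$ are $\mu_\phi(e)$ times the two outgoing slopes of $g$ at the interior point $\phi(x)\in e'$, so they sum to $\mu_\phi(e)\cdot(\operatorname{div} g)(\phi(x)) = m_{\tilde\phi}(x)\cdot D'(\phi(x))$, which is exactly $(\phi^*\operatorname{div} g)(x)$ for $D'=\operatorname{div} g$. When $x$ lies in the interior of a vertical edge, $\phi^* g$ is locally constant, so both sides vanish, consistent with $m_{\tilde\phi}(x)=0$. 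The remaining case is $x\in V(G)$: here I would group the edges of $G$ incident to $x$ by their image. For each edge $e'\in E(G')$ incident to $\phi(x)$, the horizontal edges $e$ at $x$ with $\phi(e)=e'$ contribute, in total, $m_\phi(x)$ times the outgoing slope of $g$ along $e'$ at $\phi(x)$ — this is precisely where harmonicity enters, since $\sum_{e\in E(G),\, x\in e,\, \phi(e)=e'}\mu_\phi(e) = m_\phi(x)$ is independent of $e'$. Summing over all such $e'$ gives $m_\phi(x)\cdot(\operatorname{div} g)(\phi(x))$. The vertical edges at $x$ contribute slope $0$. Hence $(\operatorname{div}\phi^* g)(x) = m_\phi(x)\cdot(\operatorname{div} g)(\phi(x)) = (\phi^*\operatorname{div} g)(x)$.

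Finally I would note that both sides are supported on a finite set (the preimages of $\operatorname{supp}(\operatorname{div} g)$ together with $V(G)$), so checking the equality pointwise on $\Gamma$ suffices, and the result is independent of the chosen models by the remarks in Section 2 on well-definedness of $m_\phi$ and harmonicity. The main obstacle is bookkeeping rather than conceptual depth: one must carefully handle the vertex case, making sure that the harmonicity hypothesis is invoked exactly once to collapse the sum over incident edges, and that the subdivision used to make $g$ piecewise linear does not alter the morphism structure — a point where it is worth remarking that subdividing $G'$ forces a compatible subdivision of $G$ along horizontal edges, which is harmless since $\mu_\phi$ and $m_\phi$ are unchanged under such refinements.
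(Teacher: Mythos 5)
Your proposal is correct and follows essentially the same route as the paper: both decompose $\mg$ and $\mg'$ into pieces on which $g$ and $\phi^{*}g$ are linear and compatible with $\phi$, compute outgoing slopes via the factor $\mu_\phi(e)=l'(e')/l(e)$, and invoke harmonicity exactly once to collapse the sum over preimage edges at a vertex into $m_\phi(x)\cdot(\divi g)(\phi(x))$. Your extra case analysis (interiors of horizontal and vertical edges) and the remark on refining models are just explicit versions of steps the paper's proof leaves implicit.
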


\begin{remark}
The two occurrences of $\phi^{*}$ are really different operations, one on divisors and the other on rational functions.
Also, note that there is an analogous statement for pushforwards along $\phi$, as well as many other analogues of standard properties of holomorphic maps, but we do not need them in this paper.
\end{remark}

\begin{proof} It is straightforward (and we omit the details) to show that we may break $\mg$ and $\mg'$ into sets $S$ and $S'$ of segments along which $\phi^*g$ and $g$, respectively, are linear, and furthermore, such that each segment $s\in S$ is mapped linearly to some $s'\in S$ or collapsed to a point.
 Then at any point $x'\in \mg$, we have
$$(\divi g)(x')=\sum_{s'=x'y'\in S'}\frac{g(y')-g(x')}{l(s')}.$$
So for any $x\in \mg$,
\begin{eqnarray*}
(\phi^{*}\divi g)(x)
&=& m_{\phi}(x)\sum_{s'=\phi(x)y'\in S'}\frac{g(y')-g(\phi(x))}{l(s')} \\
&=& \sum_{s'=\phi(x)y'\in S'}    \sum_{s=xy\in\phi^{-1}(s')} \frac{l(s')}{l(s)} \cdot\frac{g(y')-g(\phi(x))}{l(s')} \\ \\
&=&\sum_{s'=\phi(x)y'\in S'}    \sum_{s=xy\in\phi^{-1}(s')} \frac{g(\phi(y))-g(\phi(x))}{l(s)} \\ \\
&=&(\divi\phi^{*}g)(x).
\end{eqnarray*}
\vskip -.75cm
\end{proof}

\section{When is a metric graph hyperelliptic?}

Our goal in this section is to prove Theorem \ref{t:main}, which characterizes when a metric graph hyperelliptic.  It is the metric analogue of one of the main theorems of \cite{bn}.  We will first prove the theorem for 2-edge-connected metric graphs in Theorem \ref{t:2ec} and then prove the general case.  

Given $f$ a rational function on a metric graph $\mg$, let $M\left(f\right)$ (respectively $m\left(f\right)$) denote the set of points of $\mg$ at which $f$ is maximized (respectively minimized).
Recall that a graph is said to be {\bf $k$-edge-connected} if removing any set of at most $k-1$ edges from it yields a connected graph.  A metric graph $\mg$ is said to be $k$-edge-connected if the underlying graph $G_0$ of its canonical model $(G_0,l)$ is $k$-edge-connected.  Thus a  metric graph is $2$-edge-connected if and only if any model for it is 2-edge-connected, although the analogous statement is false for $k>2$.

The next lemma studies the structure of tropical rational functions with precisely two zeroes and two poles.  It is important to our study of hyperelliptic curves because if $\mg$ is a graph with a degree two harmonic morphism $\phi$ to a tree $T$ and $D=\tilde{y}-\tilde{x}\in\Prin T$, then the pullback $\phi^*(D)$ is a principal divisor on $\mg$ with two zeroes and two poles by Proposition~\ref{p:pushpull}.

\begin{lemma}\label{l:slopes}
Let $\mg$ be a $2$-edge connected metric graph; let $f$ be a rational function on $\mg$ such that
$$\divi f=y+y'-x-x'$$
for $x,x',y,y'\in \mg$
such that the sets $\{ x,x'\}$
and $\{ y,y'\}$  are disjoint. Then
\begin{enumerate}
 \item $\partial M(f)=\{ x,x'\}$ and $\partial m (f)=\{ y,y'\}$, where $\partial$ denotes the boundary.  In fact, there are precisely two edges leaving $M(f)$, one at $x$ and one at $x'$, and precisely two edges leaving $m(f)$, one at $y$ and one at $y'$.  Furthermore, there is an $x{-}x'$ path in $M(f)$, and a $y{-}y'$ path in $m(f)$.
 \item  $f$ never takes on slope greater than 1. Furthermore, for any $w\in \mg$, let $s^{+}(w)$, (resp.~$s^{-}(w)$) denote the total positive (resp.~negative) outgoing slope from $w$.  Then \[s^{+}(w)\leq 2\text{ and }s^{-}(w)\geq-2.\]
\item  Given $w\in \mg$ with $(\divi f)(w)=0$, the multiset of outgoing nonzero slopes at $w$ is $\emptyset$, $\{ 1,-1\}$, or $\{ 1,1,-1,-1\}$.
\end{enumerate}
\end{lemma}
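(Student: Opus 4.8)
The plan is to analyze the level sets of $f$ by a combination of the maximum principle for harmonic/piecewise-linear functions on graphs and the constraint $\deg(\divi f) = 0$ with exactly four prescribed support points. First I would recall the basic fact that if $f$ is a rational function on a metric graph and $x$ is a point where $f$ attains a local maximum (in particular, any point of $M(f)$ that is a boundary point, i.e.\ lies in $\partial M(f)$), then the sum of outgoing slopes at $x$ is strictly negative, so $(\divi f)(x) < 0$; symmetrically, at a boundary point of $m(f)$, $(\divi f)(x) > 0$. Since $\divi f = y + y' - x - x'$, the only points where $\divi f$ is negative are $x$ and $x'$, and the only points where it is positive are $y$ and $y'$. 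Hence $\partial M(f) \subseteq \{x,x'\}$ and $\partial m(f) \subseteq \{y,y'\}$. To get equality and the ``precisely two edges'' statement, I would use $2$-edge-connectedness crucially: the set $M(f)$ is a closed subset of $\mg$, and if only one edge left $M(f)$, that edge would be a bridge separating $M(f)$ from the rest of $\mg$ (impossible by $2$-edge-connectivity, unless $M(f) = \mg$, which is excluded since $f$ is nonconstant). So at least two edges leave $M(f)$; each leaves at a point of $\partial M(f) \subseteq \{x,x'\}$; and the $\divi f$ values $(\divi f)(x) = (\divi f)(x') = -1$ force that exactly one edge leaves at $x$ and exactly one at $x'$ with slope $1$ (the slope cannot exceed $1$ since a larger inward-decreasing slope at $x$ would make $(\divi f)(x) \le -2$). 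This simultaneously gives that $f$ decreases with slope exactly $1$ along the single edge leaving $M(f)$ at each of $x, x'$; symmetrically at $m(f)$. For the path claim: $M(f)$ together with these edges — actually $M(f)$ itself — must be connected, because if $M(f)$ had two components, $2$-edge-connectivity would force at least two edges leaving \emph{each} component, giving more than two edges leaving $M(f)$ in total, contradiction. Connectedness of $M(f)$ gives the $x$--$x'$ path inside it, and likewise the $y$--$y'$ path in $m(f)$. This proves (i).

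For (ii), the slope-$\le 1$ bound I would prove by propagating the slope-$1$ fact established at $\partial M(f)$ along level sets. The clean way: suppose for contradiction $f$ achieves slope $>1$ somewhere; let $c$ be the supremum of values $a$ such that on the superlevel set $\{f \ge a\}$ all outgoing slopes (from that set) are $1$ in absolute value at boundary points, and track how the topology of $\{f \ge a\}$ changes as $a$ decreases. Each time an edge enters the superlevel set ``from outside'' at a non-support point $w$ (where $(\divi f)(w) = 0$), the local slope picture at $w$ must be balanced: the slopes going up out of the sublevel direction equal the slopes going down, and combined with the running bound this keeps everything at $\pm 1$. More concretely I would argue pointwise: fix $w$ and let the outgoing slopes be $m_1 \ge m_2 \ge \cdots$; positive slopes point toward directions where $f$ increases. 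Walking in a direction of maximal positive slope $m_1$ from $w$, we eventually must reach a point of $\partial M(f) = \{x,x'\}$ (since $f$ is bounded and increases), and along the way the slope can only decrease at intermediate valence-$2$ points where $\divi f = 0$ forces the slope to be locally constant — so the slope is constant $= m_1$ along a whole segment until we hit a point with $\divi f \ne 0$, which must be $x$ or $x'$, where we showed the (unique) outgoing-toward-$M(f)$... wait, rather: the incoming slope at $x$ is $1$. Hence $m_1 \le 1$. The same argument downward gives the negative slopes are $\ge -1$ in absolute value, i.e.\ $\ge -1$. Then $s^+(w) = \sum_{m_i > 0} m_i$: each positive outgoing slope at $w$ is exactly $1$ (it's $\le 1$ and a positive integer), and I claim there are at most two such directions — a direction of positive slope from $w$ leads (by the segment-tracking argument) monotonically up to $x$ or to $x'$, and these two ``uphill trees'' rooted at $x$ and $x'$ respectively can meet $w$ in at most one edge each, because each of $x,x'$ has only one edge of $M(f)$ leaving it and the uphill region $\{f \ge f(w)\}$... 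I would make this precise by noting the uphill region from $w$ deformation-retracts onto $M(f)$, which is connected with exactly two ``exit edges,'' so $w$ sees at most two uphill directions. Hence $s^+(w) \le 2$, and symmetrically $s^-(w) \ge -2$.

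Part (iii) is then essentially bookkeeping: at $w$ with $(\divi f)(w) = 0$ we have $s^+(w) + s^-(w) = 0$ (that's exactly what $\divi f = 0$ says, up to sign of the slope convention), all nonzero outgoing slopes are $\pm 1$ by (ii), and $s^+(w) \le 2$. So the number of $+1$ slopes equals the number of $-1$ slopes and is $0$, $1$, or $2$; the multiset is $\emptyset$, $\{1,-1\}$, or $\{1,1,-1,-1\}$ accordingly. The main obstacle I anticipate is the rigorous treatment of the ``track a maximal-slope direction uphill until it hits a support point'' argument in (ii) and the bound of two uphill directions at an arbitrary point $w$: it is intuitively clear from the maximum principle plus $2$-edge-connectedness, but making the retraction/connectedness statement airtight — especially ruling out that two uphill directions from $w$ ``merge'' and then split again in a way that would let a third direction also be uphill — requires care. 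I would handle this by working with the subgraph $\Gamma_{\ge f(w)}$ on the canonical model refined to include $w$ and the support points, observing it is connected (two components would each need two exit edges, contradicting that there are only two edges leaving $M(f)$ in total, as edges leaving a superlevel set are ``below'' $M(f)$ and funnel through $x, x'$), and that a connected graph in which only two edges are incident to leaves $x,x'$... —more simply, I would count: the total outgoing slope from $\Gamma_{\ge a}$ equals $-\deg(\divi f|_{\Gamma_{\ge a}})$ type identities, forcing exactly two unit-slope edges to leave it for every $a$ strictly between $\min f$ and $\max f$, and then $w$ lies on $\partial\Gamma_{\ge f(w)+\epsilon}$ with its uphill directions being precisely the directions into $\Gamma_{\ge f(w)+\epsilon}$, of which — since that set looks like a neighborhood of the connected set $M(f)$ with two exit edges — there are at most two at any single point. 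Once that structural lemma about superlevel sets is in hand, (i), (ii), (iii) all follow quickly.
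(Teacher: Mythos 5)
Your parts (i) and (iii) track the paper's own argument and are fine; the real point of comparison is (ii). There your ``pointwise'' argument --- walk uphill from $w$ in a direction of maximal slope $m_1$ and claim the slope stays $m_1$ until a point of nonzero divisor is reached --- fails as stated: at a point of valence $\ge 3$ with $(\divi f)=0$ the slope can drop without the divisor noticing (outgoing slopes $\{-2,+1,+1\}$ sum to zero), so the walk need not reach $x$ or $x'$ with slope $m_1$, and the claim that the two ``uphill trees'' meet $w$ in at most one edge each is likewise unjustified once uphill paths may merge and re-split. You anticipated this, and your fallback --- the superlevel-set count --- is the correct fix and does carry the argument: for $a>\min f$, summing $\divi f$ over $\{f\ge a\}$ shows the outgoing slopes at its boundary are negative integers summing to $-2$, hence either two exits of slope $-1$ or one exit of slope $-2$. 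But note that the degree identity alone does \emph{not} ``force exactly two unit-slope edges'': you must rule out the single slope-$2$ exit by the same bridge/2-edge-connectivity argument you used for $M(f)$, and this step is genuinely needed, since without it you cannot conclude that $f$ has slope at most $1$. Also, $w$ does not lie on $\partial\{f\ge f(w)+\epsilon\}$; rather, each direction of positive slope at $w$ crosses into that superlevel set at a distinct nearby boundary point whose downhill germ is one of its at most two exits, and that is what bounds the number of positive directions at $w$ and gives $s^{+}(w)\le 2$.

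For comparison, the paper proves (ii) by a related but different device: it fixes $w$, forms the region $U$ of all paths from $w$ along which $f$ is nonincreasing, observes that every segment leaving $U$ has positive slope, and sums the resulting divisor inequalities over the break points of $U$; the sum telescopes to $\sum_{u}(\divi f)(u)\ge s^{+}(w)$, whence $s^{+}(w)\le 2$, and in the equality case $\partial U=\{w\}$ together with 2-edge-connectivity excludes a single slope-$2$ direction. Both routes rest on ``sum $\divi f$ over a subregion, count boundary slopes against the total degree $2$, and use 2-edge-connectivity to forbid a lone exit''; yours does it with level sets (an exact identity, uniform in the level), the paper's with the downhill region from $w$ (localized at the point of interest). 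Provided you discard the constant-slope walk and add the bridge step for superlevel sets, your plan goes through.
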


\begin{proof}
Let $D=\divi f$.
 Note that $D<0$ at any point in $\partial M(f)$, so $\partial M(f)\subset\{ x,x'\}$. Now, $\partial M(f)$ must be nonempty since $\divi f\neq0$. If $|\partial M(f)|=1$, then since $M(f)$ has at least two edges leaving it (since $\mg$ is $2$-edge-connected), and $f$ is decreasing along them, then we must have $x=x'$ and $\partial M(f) = \{x\}$ as desired. Otherwise, $|\partial M(f)|=2$ and again $\partial M(f)=\{ x,x'\}$. Now, $f$ must decrease along any segment leaving $M(f)$, and there are at least two such segments by 2-edge-connectivity of $\mg$.  By inspecting $D$, we conclude that there exactly two such segments, one at $x$ and one at $x'$.  Furthermore, suppose $x\ne x'$.  Then deleting the segment leaving $M(f)$ at $x$ cannot separate $x$ from $m(f)$, again by 2-edge-connectivity, so there is an $x{-}x'$ path in $M(f)$.  The analogous results hold for $m(f)$. This proves (i).

Let us prove (ii).  Pick any $w\in \mg$. We will show that $s^+(w)\le 2$, and moreover, that if $s^+(w)=2$ then there are precisely two directions at $w$ along which $f$ has outgoing slope $+1$.  An analogous argument holds for $s^-(w)$, so (ii) will follow.

Let $U$ be the union of all paths in $\mg$ that start at $w$ and along which $f$ is nonincreasing. Let $w_{1},\dots,w_{l}$ be the set of points in $U$ that are either vertices of $\mg$ or are points at which $f$ is not differentiable.  Let $W=\{w,w_{1},\dots,w_{l}\}$. Then $U\backslash W$ consists of finitely many open segments. Let $S=\{s_{1},\dots,s_{k}\}$ be the set of closures of these segments.  So the closed segments $s_{1},\dots,s_{k}$ cover $U$ and intersect at points in $W$, and $f$ is linear along each of them.  Orient each segment in $S$ for reference, and for each $y\in W$, let
\begin{align*}
\delta^+(y)&=\{j~:~s_j\text{ is outgoing at }y\},\\
\delta^-(y)&=\{j~:~s_j\text{ is incoming at }y\}.
\end{align*}
Finally, for each $i=1,\ldots,k$, let $m_i\in\Z$ be the slope that $f$ takes on along the (oriented) segment $s_i$.

The key observation regarding $U$ is that the slope of $f$ along any edge $e$ leaving $U$ must be positive, because otherwise $e$ would lie in $U$.
Therefore we have
\begin{align}
D(w_{i})\,&\geq\sum_{j\in\delta^+(w_i)}m_{j}-\sum_{j\in\delta^-(w_i)}m_{j}\qquad\quad\text{for }i=1,\ldots,l,\label{e:wi}\\
D(w)\,&\geq\sum_{j\in\delta^+(w)}m_{j}-\sum_{j\in\delta^-(w)}m_{j} - s^+(w).\label{e:w}
\end{align}
Summing \eqref{e:wi} and \eqref{e:w}, we have
\begin{align}
D(w)+D(w_{1})+\cdots+D(w_{l})\geq s^+(w). \label{e:sum}
\end{align}
Since $D=y+y'-x-x'$, we must have $s^+(w)\le 2$.

Suppose $s^+(w)= 2$.  Then the inequality \eqref{e:sum} must be an equality, and so \eqref{e:wi} and \eqref{e:w} must also be equalities.  In particular, no segment leaves $U$ except at $w$.  Since $\mg$ is $2$-edge-connected and $\partial U=\{w\}$, it follows that there are precisely two directions at $w$ along which $f$ has outgoing slope $+1$.  This proves (ii).

Finally, part (iii) follows directly from (ii).
\end{proof}

We now come to the metric version of \cite[Theorem 5.12]{bn}, which gives equivalent characterizations of hyperellipticity for $2$-edge-connected metric graphs.

\begin{theorem}\label{t:2ec}
Let $\mg$ be a $2$-edge-connected metric graph, and let $(G,l)$ denote its canonical loopless model.
Then \tfae:
\begin{enumerate}
 \item $\mg$ is hyperelliptic.
 \item There exists an involution $i:G\rightarrow G$ such that $G/i$ is a tree.
 \item There exists a nondegenerate harmonic morphism of degree $2$ from $G$ to a tree, or $|V(G)|=2$.
\end{enumerate}
\end{theorem}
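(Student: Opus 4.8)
The plan is to prove the cycle of implications $(ii)\Rightarrow(iii)\Rightarrow(i)\Rightarrow(ii)$, since $(ii)\Rightarrow(iii)$ is essentially already in hand and $(iii)\Rightarrow(i)$ is where the pullback machinery pays off.

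First, $(ii)\Rightarrow(iii)$: given an involution $i$ with $G/i$ a tree, I would take $K=\langle i\rangle$ and invoke Lemma~\ref{l:qharmonic}. If $G/i$ has at least one edge, the quotient morphism $\pi_K:G\to G/i$ is a nondegenerate harmonic morphism of degree $|K|=2$ to the tree $G/i$, which is exactly the first alternative in $(iii)$. If $G/i$ consists of a single vertex, then $i$ must identify all vertices of $G$ into one orbit; since $G$ is loopless and $2$-edge-connected, a short case analysis forces $|V(G)|=2$ (two vertices swapped by $i$, joined by a positive number of parallel edges), which is the second alternative. One subtlety to check: the canonical loopless model of a $2$-edge-connected $\mg$ has no vertices of valence $2$ unless $\mg$ is a ``banana'' (two vertices joined by parallel edges), so the degenerate case really is just $|V(G)|=2$.

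Next, $(iii)\Rightarrow(i)$: suppose $\phi:\mg\to T$ is a nondegenerate harmonic morphism of degree $2$ to a metric tree $T$ (the case $|V(G)|=2$ is handled directly—a banana graph carries the obvious degree-$2$ divisor with rank $1$). Pick any point $p$ in $T$ that is not a leaf, and choose a leaf $q$ of $T$; the divisor $D_T=(q)-(p')$ for a suitable nearby point is principal on the tree $T$ (every degree-$0$ divisor on a tree is principal, since $\Jac(T)=1$). Actually, more cleanly: choose two distinct points $a,b\in T$, so $(a)-(b)=\divi g_T$ for some rational function $g_T$ on $T$. Set $D=\phi^*((a))$. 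This is an effective divisor of degree $2$ on $\mg$ by the definition of $\phi^*$ and the fact that $\deg\phi=2$ (using nondegeneracy so that $m_\phi(x)\ge 1$ everywhere, hence the fiber multiplicities sum correctly—here I would cite the degree computation and $m_\phi$ being the horizontal multiplicity). To see $D$ has rank at least $1$, I must show that for every point $z\in\mg$ there is an effective $E'$ with $D-(z)\sim E'$. Take $b=\phi(z)$ and let $g_T$ satisfy $\divi g_T=(a)-(b)$ on $T$; then by Proposition~\ref{p:pushpull}, $\divi(\phi^*g_T)=\phi^*\divi g_T=\phi^*((a))-\phi^*((b))=D-\phi^*((b))$. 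Since $\phi^*((b))$ is effective of degree $2$ and contains $z$ in its support (because $m_\phi(z)\ge 1$), we get $D\sim \phi^*((b))\ge (z)$, so $D-(z)\sim \phi^*((b))-(z)\ge 0$. Hence $\rk D\ge 1$; and $\rk D\le 1$ for degree reasons (or because $\mg$ is not a tree), so $\mg$ is hyperelliptic.

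Finally, $(i)\Rightarrow(ii)$ is the heart of the matter and where Lemma~\ref{l:slopes} is used. Suppose $\mg$ is hyperelliptic, so there is a rational function $f$ with $\divi f = y+y'-x-x'$ and $\{x,x'\}\cap\{y,y'\}=\varnothing$ (if $D$ has a base point one reduces to this form after subtracting it; one must argue the reduced divisor still has degree $2$ and rank $1$, using $2$-edge-connectivity to rule out degenerate configurations). The idea is to use $f$ to build the involution: Lemma~\ref{l:slopes}(ii)--(iii) says $f$ has slopes only in $\{-1,0,1\}$ and that at each point not in the support of $\divi f$ the outgoing slopes pair up as $\{1,-1\}$ or $\{1,1,-1,-1\}$. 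This lets me define $i$ ``vertically'': along each level set, the graph $\mg$ looks like a double cover, and $i$ swaps the two sheets. Concretely, for a generic value $t$ between $\min f$ and $\max f$, the level set $f^{-1}(t)$ consists of exactly two points (this follows by a continuity/degree argument from part (i): there are exactly two edges leaving $M(f)$ and $m(f)$, and the ``two-sheeted'' structure propagates), and the two ascending/descending trajectories from them organize $\mg\setminus(M(f)\cup m(f))$ into pairs of isometric segments. Define $i$ to fix $M(f)$ and $m(f)$ pointwise (using the $x$–$x'$ path in $M(f)$ and $y$–$y'$ path in $m(f)$ from part (i)) and to swap the paired segments; check $i$ is an isometry, $i^2=\id$, and that $i$ descends to the canonical loopless model $(G,l)$ after possibly refining the vertex set. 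Then $\mg/i$ is obtained by folding the two sheets together, so it is connected with trivial first homology—i.e.\ $\Jac(\mg/i)=1$, hence $\mg/i$ is a tree, and correspondingly $G/i$ is a tree. The main obstacle is precisely this last step: making rigorous the claim that the slope conditions from Lemma~\ref{l:slopes} force a global two-sheeted structure (no monodromy), so that the local sheet-swap glues into a well-defined global involution. I would handle this by working with the canonical loopless model, inducting on the number of edges or on a suitable complexity of $f$ (e.g.\ collapsing a segment where $f$ is constant, or contracting a ``digon'' swapped by the putative involution), reducing to the base case $|V(G)|=2$.
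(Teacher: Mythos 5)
Your cycle $(ii)\Rightarrow(iii)\Rightarrow(i)$ agrees with the paper: $(ii)\Rightarrow(iii)$ is Lemma~\ref{l:qharmonic}, and your $(iii)\Rightarrow(i)$ via $D=\phi^*(a)$, $\Jac T=1$ and Proposition~\ref{p:pushpull} is exactly the paper's argument. The genuine gap is in $(i)\Rightarrow(ii)$, and it is precisely the step you flag as unresolved. The paper's key idea, which your proposal is missing, is to define the involution directly from the rank-one divisor: for each $x\in\Gamma$ there is, by rank $1$, a point $x'$ with $D\sim x+x'$, and by $2$-edge-connectivity this $x'$ is \emph{unique}, so $i(x)=x'$ is a globally well-defined set-theoretic involution with no gluing or monodromy issue at all. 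Lemma~\ref{l:slopes} then enters only to prove that this $i$ is an isometry (so it descends to an automorphism of the canonical loopless model) and, later, that $G/i$ is a tree. Your alternative route -- building $i$ from the level sets of a single rational function $f$ with $\divi f=y+y'-x-x'$ -- cannot work as stated: one such $f$ does not determine the involution away from the fibers it ``sees,'' and your prescription that $i$ fix $M(f)$ and $m(f)$ pointwise is false in general. For instance, on the graph with two vertices joined by three edges, if $x,x'$ are interior to one edge then $M(f)$ is the subsegment between them, and the hyperelliptic involution \emph{reverses} that segment (swapping $x$ and $x'$, fixing only its midpoint); it does not fix it pointwise. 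The proposed repair by ``induction on edges or on a complexity of $f$'' is not carried out, so the construction of $i$ is not established.

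A second, smaller gap: even granting the involution, your argument that the quotient is a tree is only the assertion that folding two sheets kills first homology. The paper proves this concretely: for each edge $\tilde e$ of $G/i$ it shows the fiber $\pi^{-1}(\tilde e)$ consists of exactly two edges $e,e'$ (ruling out a fixed edge by a two-paths-of-equal-length contradiction), then applies Lemma~\ref{l:slopes} to a function with divisor $y+y'-x-x'$ supported on the endpoints of $e,e'$ to conclude that $\{e,e'\}$ separates $\{x,x'\}$ from $\{y,y'\}$, hence every $\tilde e$ is a cut edge and $G/i$ is a tree. Some argument of this kind (or the Baker--Norine pullback-of-Jacobians argument the paper mentions as an alternative) is needed; it does not follow formally from the existence of a degree-$2$ quotient map.
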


\begin{proof} First, if $|V(G)|=2$, then $G$ must be the unique graph on 2 vertices and $n$ edges, and so all three conditions hold.  So suppose $|V(G)|>2$.  Let us prove (i) $\Rightarrow$ (ii).

Let $D\in\Div \mg$ be a divisor of degree $2$ and rank $1$. We will now define an involution $i:\mg\rightarrow\mg$ and then show that it induces an involution $i:G\rightarrow G$ on its model.  By slight abuse of notation, we will call both of these involutions $i$.
We will then prove that $G/i$ is a tree.

We define $i:\mg\rightarrow\mg$ as follows.  Given $x\in \mg$, since $D$ has rank $1$, there exists $x'\in \mg$ such that $D\sim x+x'$. Since $\mg$ is $2$-edge-connected, this $x'$ is unique.  Then let $i(x)=x'$. Clearly $i$ is an involution on sets.

\begin{claim}\label{c:iso}
The map $i:\mg\rightarrow\mg$ is an isometry.
\end{claim}
\begin{proof}[Proof of Claim \ref{c:iso}]
Let $x,y\in \mg$, and let $x'=i(x)$, $y'=i(y)$. We will show that
$$d(x,y)=d(x',y')$$
\wrt\ the shortest path metric on $\mg$. We may assume $x\neq y$ and $x\neq y'$, for otherwise the statement is clear.

By construction, we have $x+x'\sim y+y'$, so let $f:\mg\rightarrow\mathbb{R}$ be a rational function on $\mg$ such that
$$\divi f=y+y'-x-x'.$$
By Lemma~\ref{l:slopes}(i), we have $\partial M(f)=\left\{ x,x'\right\}$ and $\partial m(f)=\left\{ y,y'\right\}$.  Furthermore, $M(f)$ has precisely two edges leaving it, one at $x$ and one at $x'$.  Similarly, $m(f)$ has precisely two edges leaving it, one at $y$ and one at $y'$.

By a {\bf path down from \ensuremath{x}} we mean a path in $\mg$ that starts at $x$ and along which $f$ is decreasing, and which is as long as possible given these conditions. By Lemma~\ref{l:slopes}(ii), $f$ must have constant slope $-1$ along such a path.  Furthermore, by Lemma~\ref{l:slopes}(iii), any path down from $x$ must end at $y$ or $y'$. After all, if it ended at some point $w$ with $\divi f(w)=0$, then it would not be longest possible.  Similarly, every path down from $x'$ must end at $y$ or $y'$.

So let $P$ be a path down from $x$, and let $l$ be its length in $\mg$. Suppose that $P$ ends at $y$. Note that $d(x,y)=l$. Indeed if there were a shorter path from $x$ to $y$, then $f$ would have an average slope of less than $-1$ along it, contradicting Lemma~\ref{l:slopes}(ii). Also, by Lemma~\ref{l:slopes}(iii), there must exist a path $P'$ down from $x'$ to $m(f)$ that is edge-disjoint from $P$, so $P'$ must end at $y'$. Now, $P$ and $P'$  have the same length since $f$ decreases from its maximum to its minimum value at constant slope $-1$ along each of them. Thus $d(x',y')=l$, and $d(x,y)=d(x',y')$ as desired.

So we may assume instead that every path $P$ down from $x$ ends at $y'$. Then every path down from $x'$  must end at $y$, and no pair of $x$--$y'$ and $x$--$y$ paths has a common vertex, for otherwise we could find a path down from $x$ that ends at $y$.
Now, all of these paths have a common length, say $l$, by the argument above. Let $d_{x}$ be the length of the shortest path in $M(f)$ between $x$ and $x'$, and let $d_{y}$ be the length of the shortest path in $m(f)$ between $y$ and $y'$. Then $$d(x,y)=\min(d_{x},d_{y})+l=d(x',y'),$$
proving the claim.
\end{proof}

Thus the involution $i:\mg\rightarrow\mg$ induces an automorphism $i:G\rightarrow G$ of canonical loopless models.
Then by Lemma \ref{l:qharmonic}, it induces a quotient morphism
$$\pi:G\rightarrow G/i$$
which is a nondegenerate harmonic morphism of degree two.  In particular, we note that $G/i$ does not consist of a single vertex, since $G$ had more than two vertices by assumption.

The final task is to show that $G/i$ is a tree. In \cite{bn}, this is done by by showing that $\Jac\left(G/i\right)=1$ using the pullback of Jacobians along harmonic morphisms. Here, we will instead prove directly, in several steps, that the removal of any edge $\tilde{e}$ from $G/i$ disconnects it.  If so, then $G/i$ is necessarily a tree.

\begin{claim}\label{c:2edges} Let $\tilde{e}\in E(G/i)$.  Then $\pi^{-1}(\tilde{e})$ consists of two edges.
\end{claim}

\begin{proof}[Proof of Claim \ref{c:2edges}]
Suppose instead that $\pi^{-1}(\tilde{e})$ consists of a single edge $e=xy\in E\left(G\right)$.  Then $i$ fixes $e$, and $i(x)=x$ and $i(y)=y$. Regarding $e$ as a segment of $\mg$, choose points $x_{0},y_{0}$ in the interior of $e$ such that the subsegment $x_{0}y_{0}$ of $e$ has length $<d\left(x,y\right)$. Then $i(x_0)=x_0$ and $i(y_0)=y_0$, so $D\sim2x_{0}\sim2 y_0$.  So there exists $f:G\rightarrow\mathbb{R}$ with $\divi f=2x_{0}-2y_{0}$. Then there are two paths down from $x_{0}$ to $y_{0}$, necessarily of the same length in $G$; but one of them is the segment $x_{0}y_{0}$ and the other one passes through $x$ and $y$, contradiction.
\end{proof}

\begin{claim} \label{c:2ecut} Let $\tilde{e}\in E(G/i)$, and let $\pi^{-1}(\tilde{e}) = \{e,e'\}$.  Let $e$ have vertices $x,y$ and $e'$ have vertices $x',y'$, labeled so that $\pi(x)=\pi(x')$ and $\pi(y)=\pi(y')$.
Then in $G\setminus\{e,e'\}$, there is no path from $\{x,x'\}$ to $\{y,y'\}$.
\end{claim}

\begin{proof}[Proof of Claim \ref{c:2ecut}] By definition of the quotient map $\pi$, we have $i(x)=x'$ and $i(y)=y'$.  
Since $x+x'\sim y+y'$, we may pick a rational function $f$ on $\mg$ such that
$$\divi f=y+y'-x-x'.$$
Then $f$ is linear along both $e$ and $e'$.  By Lemma~\ref{l:slopes}(i), we have $\partial M(f)=\{ x,x'\}$ and $\partial m (f)=\{ y,y'\}$, and by Lemma~\ref{l:slopes}(ii), $f$ must have constant slope $1$ along each of $e$ and $e'$, decreasing from $\{x,x'\}$ to $\{y,y'\}$.  Then again by Lemma~\ref{l:slopes}(i), $e$ and $e'$ separate $\{x,x'\}$ from $\{y,y'\}$.
\end{proof}

\begin{claim} \label{c:allcut} Any edge $\tilde{e}\in E(G/i)$ is a cut edge, that is, its removal disconnects $G/i$.
\end{claim}

\begin{proof}[Proof of Claim \ref{c:allcut}] By Claim \ref{c:2edges}, we may let $\pi^{-1}(\tilde{e}) = \{e,e'\}$.  Let $e$ have vertices $x,y$ and $e'$ have vertices $x',y'$, and let $\tilde{x}=\pi(x)=\pi(x')$ and $\tilde{y}=\pi(y)=\pi(y')$ be the vertices of $\tilde{e}$.

Suppose for a contradiction that there is a path in $G/i$ from $\tilde{x}$ to $\tilde{y}$ not using $\tilde{e}$.  By nondegeneracy of $\pi$, we may lift that path to a path in $G$ from $x$ to $y$ or $y'$ that does not use $e$ or $e'$.  But this contradicts Claim~\ref{c:2ecut}.
\end{proof}

We conclude that $G/i$ is a tree. We have shown (i) implies (ii).

(ii) $\Rightarrow$ (iii). This is precisely Lemma \ref{l:qharmonic}.

 (iii) $\Rightarrow$(i). This argument follows \cite{bn}.  Let $\phi:(G,l)\rightarrow (T,l')$ be a nondegenerate degree 2 harmonic morphism of loopless models, where $T$ is a tree.  Let $\phi:\mg\rightarrow\mt$ be the induced map on metric graphs, also denoted $\phi$ by abuse of notation.  Pick any $y_{0}\in \mt$ and, regarding $y_{0}$ as a divisor of degree $1$, let $D=\phi^{*}(y_{0})$.  So $D$ is an effective divisor on $\mg$ of degree $2$, say $D=y+y'$. Now we have
$r(D )\leq1$, for otherwise, for any $z\in\mg$, we would have $(y+y')-(z+y')\sim 0$, so $y\sim z$, so $\mg$ itself would be a tree.  It remains to show $r(D)\geq1$. Let $x\in \mg$. Then $\phi(x)\sim y_{0}, \text{ since } \Jac T=1$. Then by Proposition \ref{p:pushpull}, we have $$D=\phi^{*}(y_{0})\sim\phi^{*}(\phi(x))$$
$$=m_{\phi}(x)\cdot x+E$$
for $E$ an effective divisor on $\mg$. Since $\phi$ is nondegenerate, we have $m_{\phi}(x)\geq1$, so $D\sim x+E'$ for some effective $E'$, as desired. This completes the proof of Theorem~\ref{t:2ec}.
\end{proof}

In the next section, we will use the fact that the hyperelliptic involution in Theorem~\ref{t:2ec}(ii) is unique.  To prove uniqueness, we need the following fact.

\begin{proposition}\label{p:5.5}\cite[Proposition 5.5]{bn}
If $D$ and $D'$ are degree 2 divisors on a metric graph $\mg$ with $r(D) = r(D') = 1$, then $D \sim D'$.
\end{proposition}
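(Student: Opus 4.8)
The plan is to show that any two degree-$2$ rank-$1$ divisors $D,D'$ on $\mg$ are linearly equivalent by building explicit paths down along a rational function interpolating between them, in the spirit of the proof of Claim~\ref{c:iso}. First I would reduce to the case where $\mg$ is $2$-edge-connected: if $\mg$ has a bridge $b$, then $b$ separates $\mg$ into two components, and since a degree-$2$ rank-$1$ divisor must place an effective class on every point, one can use a Riemann--Roch or reduced-divisor argument to push $D$ and $D'$ into the $2$-edge-connected core, reducing to that case. (Alternatively one can simply invoke the structure theory developed later in Section~3 for general metric graphs; but the cleanest route is to assume $2$-edge-connectivity and then bootstrap.) So assume $\mg$ is $2$-edge-connected.

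Write $D=p+q$ and $D'=r+s$ with all points effective (possible since $r(D)=r(D')=1$: replace $D$ by a linearly equivalent effective divisor, and similarly $D'$). If $\{p,q\}\cap\{r,s\}\ne\emptyset$, say $q=s$, then $D-D'=p-r$; but a principal divisor on a graph that is $2$-edge-connected and not a tree has no degree-$0$ part supported on two points unless those points coincide — more precisely, $p\sim r$ forces $p=r$ because $\Jac(\mg)\ne 1$ and $p-r\in\Prin(\mg)$ would give a function with a single simple pole and single simple zero, impossible on a non-tree by the same slope-counting as in Lemma~\ref{l:slopes}. So in that degenerate case $D=D'$ and we are done. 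Otherwise $\{p,q\}$ and $\{r,s\}$ are disjoint, and we must produce a rational function $f$ with $\divi f = r+s-p-q$. Here is where I would use the rank-$1$ hypothesis: since $r(D)=1$, there is a point $x'$ with $D\sim r+x'$; since $\mg$ is $2$-edge-connected, $x'$ is unique, and by symmetry applying the same to $D'$ and chasing the involutions $i_D, i_{D'}$ of Theorem~\ref{t:2ec}, one shows $i_D$ and $i_{D'}$ agree (both equal "the" hyperelliptic involution), hence $x'=s$ and $D\sim r+s=D'$.

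The main obstacle — and the step I'd spend the most care on — is the claim that the involution $i_D\colon x\mapsto$ "the unique $x'$ with $D\sim x+x'$" does not actually depend on $D$ (only on $\mg$), which is exactly what lets us conclude $D\sim D'$. The argument: by Theorem~\ref{t:2ec}, $i_D$ realizes $G$ as a double cover of the tree $G/i_D$, and its fibers are precisely the linear equivalence classes $\{x,i_D(x)\}$ whose common class has rank $1$. Given a second such $D'$ with involution $i_{D'}$, the composite $i_D\circ i_{D'}$ is an isometry of $\mg$ fixing every point in the (nonempty, since $|V(G)|>2$) intersection of a path-down-from analysis — one shows it fixes a point of valence $\ge 3$ together with all three directions out of it, and an isometry of a non-circle metric graph fixing a point and all germs at it is the identity. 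Hence $i_D=i_{D'}=:i$, and then $D\sim r+i(r)$ while also $D'\sim r+i(r)$, giving $D\sim D'$. I would present this last uniqueness-of-involution lemma as the technical heart, citing Lemma~\ref{l:slopes} for the slope-$\pm1$ structure of the interpolating functions and the $2$-edge-connectivity reductions for everything else; the bridge reduction at the start is routine and I would dispatch it in a sentence or two.
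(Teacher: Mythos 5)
There is a genuine gap, and it sits exactly where you locate the ``technical heart.'' The paper gives no new argument for Proposition \ref{p:5.5}: it cites \cite[Proposition 5.5]{bn} and remarks that Baker--Norine's direct divisor-theoretic proof extends verbatim to metric graphs; that proof does not pass through the hyperelliptic involution. Your route instead reduces everything to the claim that the involutions $i_D$ and $i_{D'}$ furnished by Theorem \ref{t:2ec} coincide. But uniqueness of the hyperelliptic involution is precisely Corollary \ref{c:5.14}, which the paper deduces \emph{from} Proposition \ref{p:5.5}; reversing the order is legitimate only if you give an independent proof, and your sketch does not. The ``path-down-from analysis'' showing that $i_D\circ i_{D'}$ fixes a point of valence $\geq 3$ together with all germs at it is never carried out, and the general principle you then invoke --- that an isometry of a non-circle metric graph fixing a point and all germs at that point must be the identity --- is false: let $u$ be joined to $w$ by three edges of pairwise distinct lengths and attach two loops of equal length at $w$; an isometry can fix $u$ and every germ at $u$ (hence all three edges pointwise) and still swap or reflect the two loops. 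So the step $i_D=i_{D'}$ is unproven, and with it the whole argument.

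Two further points. Your ``degenerate case'' $q=s$ is handled backwards: showing that on a $2$-edge-connected non-tree $p\sim r$ forces $p=r$ only says that \emph{if} $D\sim D'$ then $D=D'$; it gives no reason why $p=r$ should hold, which is exactly what needs proving, so you cannot conclude ``$D=D'$ and we are done'' (and if you really had $i_D=i_{D'}$, the case split would be unnecessary anyway). Finally, the bridge reduction is fine in spirit, but the right tool is not Riemann--Roch or reduced divisors: contract all bridges and use Lemma \ref{l:bridge} and Corollary \ref{c:bridge}, which preserve linear equivalence and rank. Note, however, that this contraction can land on a circle (e.g.\ a circle with a tree attached), where every degree-$2$ divisor has rank $1$ yet they are not all equivalent and Theorem \ref{t:2ec} is unavailable; so a correct write-up must invoke, somewhere, the implicit genus-at-least-$2$ hypothesis under which the proposition is actually used.
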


\begin{remark}
The proof of \cite[Proposition 5.5]{bn} extends immediately to metric graphs.
\end{remark}

\begin{corollary}\label{c:5.14} \cite[Corollary 5.14]{bn}
Let $\mg$ be a 2-edge-connected hyperelliptic metric graph.  Then there is a {\em unique} involution $i:\mg\rightarrow\mg$ such that $\mg/i$ is a tree.
\end{corollary}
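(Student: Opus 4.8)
The plan is to deduce uniqueness of the involution directly from Proposition \ref{p:5.5} and the explicit construction of the involution carried out in the proof of Theorem \ref{t:2ec}. Suppose $i$ and $j$ are two involutions of $\mg$ such that both $\mg/i$ and $\mg/j$ are trees. First I would reduce to the case $|V(G)| > 2$, since if $|V(G)| = 2$ there is nothing to prove (the canonical loopless model is a banana graph on two vertices, and the only involution with tree quotient is the one swapping all parallel edges in pairs — or, rather, one checks the statement is trivially true in this small case). Assuming $|V(G)| > 2$, by Lemma \ref{l:qharmonic} the quotient maps $\pi_i : \mg \to \mg/i$ and $\pi_j : \mg \to \mg/j$ are nondegenerate harmonic morphisms of degree $2$ onto trees.

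The key step is to recover $i$ from the divisor data. Pick any point $y_0 \in \mg/i$ and set $D_i = \pi_i^*(y_0)$; by the argument in the proof of (iii)$\Rightarrow$(i) in Theorem \ref{t:2ec}, $D_i$ is an effective divisor of degree $2$ and rank exactly $1$ on $\mg$. Moreover, I claim that for each $x \in \mg$ the point $i(x)$ is characterized by $x + i(x) \sim D_i$: indeed, $\pi_i(x) \sim y_0$ since $\Jac(\mg/i) = 1$, so by Proposition \ref{p:pushpull} we get $x + i(x) = \pi_i^*(\pi_i(x)) \sim \pi_i^*(y_0) = D_i$ (using nondegeneracy, $m_{\pi_i}(x) = 1$ at all $x$, which follows because $\pi_i$ has degree $2$ and is nondegenerate, so $m_{\pi_i}(x) \in \{1,2\}$, and $m_{\pi_i}(x) = 2$ would force $x$ to be the only preimage of $\pi_i(x)$, contradicting the fact that a degree-$2$ nondegenerate morphism has generic fibers of size $2$ and $\mg$ is not a tree). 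And this $i(x)$ is the \emph{unique} point with $x + i(x) \sim D_i$ by $2$-edge-connectivity, exactly as in the definition of the involution in the proof of Theorem \ref{t:2ec}. Hence $i$ is completely determined by the linear equivalence class of $D_i$.

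Now I do the same for $j$, obtaining a degree $2$, rank $1$ divisor $D_j$ with $x + j(x) \sim D_j$ for all $x$, and $j$ determined by $[D_j]$. By Proposition \ref{p:5.5}, since $r(D_i) = r(D_j) = 1$ and both have degree $2$, we conclude $D_i \sim D_j$. Therefore for every $x \in \mg$ we have $x + i(x) \sim D_i \sim D_j \sim x + j(x)$, and cancelling (which is legitimate because $\Jac(\mg)$ is a group), $i(x) \sim j(x)$ as divisors of degree $1$; but on any metric graph two points are linearly equivalent only if they are equal, so $i(x) = j(x)$. Since $x$ was arbitrary, $i = j$.

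The main obstacle is the bookkeeping in the middle step — verifying cleanly that $i(x)$ is \emph{the} unique point with $x + i(x) \sim D_i$, and in particular that $D_i$ really has rank $1$ (not $0$ and not $\geq 2$) and that $m_{\pi_i}(x) = 1$ everywhere. Both of these were essentially established inside the proof of Theorem \ref{t:2ec}, so the corollary is really a matter of assembling those observations; the only genuinely new input is Proposition \ref{p:5.5}, which upgrades "the involution is determined by its divisor class" to "the divisor class is unique," and the elementary fact that distinct points of a metric graph are never linearly equivalent (a consequence of $\Jac(\mg) = 1 \iff \mg$ is a tree, applied to the fact that $\mg$ here is not a tree).
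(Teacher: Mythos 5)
Your overall strategy is the one the paper uses: attach to each involution the degree-$2$, rank-$1$ class of $x+i(x)$, invoke Proposition~\ref{p:5.5} to identify the classes coming from $i$ and $j$, and then cancel $x$ using $2$-edge-connectivity. However, two of your justifications are incorrect as written. The claim that $m_{\pi_i}(x)=1$ for all $x$ is false, and the contradiction you derive from $m_{\pi_i}(x)=2$ does not exist: a nondegenerate degree-$2$ harmonic morphism may well have fibers of size one, namely at fixed points of the involution, where $m_{\pi_i}(x)=|\Stab(x)|=2$ (see the proof of Lemma~\ref{l:qharmonic}). For instance, for the genus-$2$ graph consisting of two circles glued at a point, the hyperelliptic involution fixes all three vertices of the canonical loopless model. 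This particular error is harmless for the identity you want at such points, since there $\pi_i^*(\pi_i(x))=2x=x+i(x)$; but note that your identity $x+i(x)=\pi_i^*(\pi_i(x))$ does genuinely fail when $x$ lies in the interior of an edge collapsed by $\pi_i$ (there $m_{\pi_i}(x)=0$ and the pullback is the sum of the two endpoints of that edge), so at those points the relation $x+i(x)\sim D_i$ needs a separate argument rather than Proposition~\ref{p:pushpull} alone. The paper is terse on this point as well, simply citing the proof of Theorem~\ref{t:2ec}, but since you spelled the computation out, the gap sits visibly in your version.

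The more serious problem is your last step. You cancel to get $i(x)\sim j(x)$ and then assert that on \emph{any} metric graph two points are linearly equivalent only if they are equal, ``because $\Jac(\mg)\neq 1$ as $\mg$ is not a tree.'' Both halves of this are wrong: two distinct points in the interior of a bridge are always linearly equivalent (slide a slope-$1$ function along the bridge), and $\Jac(\mg)\neq 1$ only says that \emph{some} degree-zero class is nontrivial, not that the particular class $i(x)-j(x)$ is. What actually rescues the step is precisely the hypothesis you set aside: $2$-edge-connectivity. If $\divi f=i(x)-j(x)$ with $i(x)\neq j(x)$, then $f$ decreases along every segment leaving $M(f)$, and $2$-edge-connectivity provides at least two such segments, forcing $\divi f$ to be $\leq -2$ at some point, a contradiction; this is exactly how the paper concludes $i(x)=i'(x)$, and it is the same argument you yourself invoked earlier to say that the point $x'$ with $x+x'\sim D_i$ is unique. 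With that substitution (and a word about the collapsed-edge points), your proof becomes the paper's.
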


\begin{proof}
Suppose $i$ and $i'$ are two such involutions.  From the proof of Theorem~\ref{t:2ec}, we see that for any $x\in \mg$, $x+i(x)$ and $x+i'(x)$ are both divisors of rank 1.  Then by Proposition~\ref{p:5.5}, $x+i(x)\sim x+i'(x)$, so $i(x)\sim i'(x)$.  Since $\mg$ is 2-edge-connected, it follows that $i(x)=i'(x)$.
\end{proof}

Our next goal is to prove Theorem \ref{t:main}, which extends Theorem \ref{t:2ec} to graphs with bridges. Theorem \ref{t:main} has no analogue in \cite{bn} because it takes advantage of the integer stretching factors present in morphisms of metric graphs which do not occur in combinatorial graphs.

Let us say that a point $s$ in a metric graph $\mg$ \emph{separates} $y,z \in \mg$ if every $y$--$z$ path in $\mg$ passes through $s$, or equivalently, if $y$ and $z$ lie in different connected components of $\mg \setminus s$.  A \emph{cut vertex} is a point $x\in\mg$ such that $\mg\setminus x$ has more than one connected component.

\begin{lemma}
\label{l:cut}
Let $\mg$ be a 2-edge-connected hyperelliptic metric graph with hyperelliptic involution $i$, and let $x$ be a cut vertex of $\mg$. Then $i(x)=x$.
\end{lemma}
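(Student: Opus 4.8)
The plan is to argue by contradiction: suppose $x$ is a cut vertex of $\mg$ with $i(x) \ne x$. Since $\mg$ is $2$-edge-connected and hyperelliptic with involution $i$, Corollary~\ref{c:5.14} tells us $i$ is the unique involution with $\mg/i$ a tree, and from the proof of Theorem~\ref{t:2ec} we know that for every $w \in \mg$ the divisor $w + i(w)$ has degree $2$ and rank $1$. First I would set $x' = i(x)$ and pick a rational function $f$ on $\mg$ with $\divi f = x + x' - 2y_0$ for a suitable reference point (or, more symmetrically, compare $x + x'$ with $2x$); the key structural input is Lemma~\ref{l:slopes}, which constrains the shape of $M(f)$ and $m(f)$ and forces all "paths down" to have constant slope $-1$.

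The heart of the argument should be to exploit the cut vertex $x$ together with $2$-edge-connectivity. Since $x$ is a cut vertex, $\mg \setminus x$ has at least two components; call one of them $C$. Because $\mg$ is $2$-edge-connected, removing the single point $x$ disconnects $\mg$ only if $x$ is incident to edges going into $C$ and into its complement, and $2$-edge-connectivity forces at least two edge-ends of $\mg \setminus x$ on each side to be "resolved" through $x$ — in particular $x$ has valence $\ge 4$, with at least two edges leading into $C$ and at least two leading into $\mg \setminus (C \cup \{x\})$. Now I would locate where $x'$ lies: either $x' \in C$ or $x' \notin C \cup \{x\}$ (it cannot equal $x$ by assumption). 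In either case, consider the behavior of the harmonic quotient morphism $\pi = \pi_{\langle i\rangle} : \mg \to \mg/i$, which by Lemma~\ref{l:qharmonic} is nondegenerate of degree $2$, and note that $\pi(x) = \pi(x')$. The plan is to show that a path down from $x$ and a path down from $x'$ (in the sense of the proof of Theorem~\ref{t:2ec}, using $\divi f = y + y' - x - x'$ for appropriate $y, y'$) must leave $x$ through the two distinct edge-directions guaranteed by Lemma~\ref{l:slopes}(ii); tracing these through the cut structure at $x$ will force a contradiction with either the edge-disjointness of the two descending paths or with the fact that $x$ is a genuine cut vertex separating the two sides.

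More concretely, the cleanest route is probably this: since $x$ is a cut vertex, pick a point $z$ in the interior of an edge on the $C$ side very close to $x$, so that $x$ separates $z$ from everything in $\mg \setminus (C \cup \{x\})$. Let $z' = i(z)$. Using Claim-type reasoning from Theorem~\ref{t:2ec}: the descending paths in any $f$ with $\divi f = z + z' - w - w'$ are edge-disjoint and have constant slope, and they must navigate the cut at $x$. Because $z$ is "trapped" behind $x$ on the $C$ side while its partner $z'$ is not (one shows $z'$ lies on the opposite side of $x$, since $i$ being the quotient by an involution whose quotient is a tree cannot confine a full descending-path structure to one side of a cut vertex without collapsing an edge incident to $x$), any two edge-disjoint descending paths from $\{z, z'\}$ are forced to both pass through $x$, violating edge-disjointness as soon as $x$ has the wrong number of usable directions. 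Pushing $z \to x$ then shows $i(x) = x$ after all.

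\textbf{Main obstacle.} I expect the delicate point to be the bookkeeping that pins down \emph{on which side of the cut vertex $x$ the partner point $i(w)$ lies}, and showing that the two edge-disjoint "paths down" demanded by Lemma~\ref{l:slopes} genuinely collide at $x$ — this is where one must combine the valence lower bound at $x$ coming from $2$-edge-connectivity with the slope-$1$ rigidity, rather than just the set-theoretic fact that $x$ is a cut vertex. A secondary subtlety is choosing the right divisor ($2x$ versus $x + i(x)$ versus a nearby $z + i(z)$) so that Lemma~\ref{l:slopes} applies cleanly (it requires the two zeros and two poles to form disjoint pairs), which may require perturbing to an interior point of an edge as in the proof of Claim~\ref{c:2edges}.
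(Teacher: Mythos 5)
There is a genuine gap here. Your plan correctly identifies the ingredients (the rank‑one divisors $w+i(w)$, Lemma~\ref{l:slopes}, the constant‑slope ``paths down''), but the step that is supposed to produce the contradiction is exactly the step you leave open, and as stated it does not work. You want two edge‑disjoint descending paths from $\{z,i(z)\}$ to ``collide at $x$, violating edge‑disjointness'' — but, as you yourself compute, $2$‑edge‑connectivity forces $x$ to have valence at least $4$ (at least two edges into each side of the cut), so two edge‑disjoint paths can perfectly well both pass through the vertex $x$; passing through a common point is not a violation of edge‑disjointness. Likewise the assertion that $i(z)$ must lie on the opposite side of the cut from $z$ is only asserted (``one shows \dots cannot confine a full descending‑path structure to one side''), not proved, and the concluding ``pushing $z\to x$'' limit is not an argument. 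Two smaller issues: the divisor $x+i(x)-2y_0$ is principal only if $y_0$ is a fixed point of $i$, so that choice needs justification, and comparing $x+i(x)$ with $2x$ is circular, since $x+i(x)\sim 2x$ together with $2$‑edge‑connectivity already \emph{is} the conclusion $i(x)=x$.

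The paper's proof avoids all of this side‑of‑the‑cut bookkeeping by one separation trick you did not use: assuming $i(x)\neq x$, choose $y$ so that $x$ separates $y$ from $i(x)$; then $i(x)$ does not separate $x$ from $y$, while applying the isometry $i$ shows $i(x)$ separates $x$ from $i(y)$; hence $i(x)$ separates $y$ from $i(y)$. Now take $f$ with $\divi f = y+i(y)-x-i(x)$ (the pairs are disjoint for a suitable $y$) and invoke the last part of Lemma~\ref{l:slopes}(i): there is a $y$--$i(y)$ path contained in $m(f)$, which cannot pass through $i(x)\in\partial M(f)$ — contradicting that $i(x)$ separates $y$ from $i(y)$. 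So the missing idea in your proposal is precisely this transfer of the separation statement through the automorphism $i$, combined with the ``path inside $m(f)$'' clause of Lemma~\ref{l:slopes}(i), rather than any collision/edge‑disjointness count at $x$.
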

\begin{proof}
 Suppose $i(x) \neq x$. Since $x$ is a cut vertex, we may pick some $y \in \mg$ such that $x$ separates $y$ and $i(x)$, so $i(x)$ does not separate $x$ and $y$. Furthermore, since $i$ is an automorphism, $i(x)$ separates $i(y)$ and $x$. Therefore $i(x)$ separates $y$ and $i(y)$. But  Lemma \ref{l:slopes}(i), applied to a rational function $f$ with
\[ \divi f = y + i(y) - x - i(x), \]
yields the existence of a $y$---$i(y)$ path in $m(f)$ not passing through $i(x)$, contradiction.
\end{proof}

\begin{lemma}
\label{l:bridge}
Let $\mg$ be a metric graph, and let $\mg'$ be the metric graph obtained from $\mg$ by contracting all bridges. Let $\varphi: \mg \to \mg'$ be the natural contraction morphism. Given $D \in \Div \mg$, let $D' \in \Div \mg'$ be given by
\[ D' = \sum_{x \in \mg} D(x) \cdot \varphi(x). \]
Then $D \in \Prin \mg$ if and only if $D' \in \Prin \mg'$.
\end{lemma}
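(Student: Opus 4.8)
The plan is to reduce to the case of contracting a single bridge and then to write down rational functions explicitly. I would induct on the number of cut edges of the canonical model $(G_0,l)$ of $\mg$. If $G_0$ has no cut edges then $\mg'=\mg$ and there is nothing to prove. Otherwise I pick one cut edge $e=xy$, let $\mg_1$ be the metric graph obtained by contracting $e$ alone, and factor $\varphi=\varphi_2\circ\varphi_1$ with $\varphi_1\colon\mg\to\mg_1$ and $\varphi_2\colon\mg_1\to\mg'$; the associated pushforwards compose, $D'=(\varphi_2)_*(\varphi_1)_*D$. Since contracting an edge of a connected graph cannot turn a non-bridge into a bridge, $\mg_1$ has exactly one fewer cut edge than $\mg$, so it suffices to treat the case where $\varphi$ contracts a single bridge $e=xy$ and then chain the two equivalences.

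So suppose $\varphi$ contracts the bridge $e=xy$. Then $\mg\setminus e^\circ$ has exactly two components $A\ni x$ and $B\ni y$, and $\mg'$ is $A$ and $B$ glued at the single point $v$ that is the image of the whole closed segment $e$; writing $c=\sum_{z\in e^\circ}D(z)$ we have $D'(z)=D(z)$ for $z\neq v$ and $D'(v)=D(x)+D(y)+c$. The one local fact I will lean on is that for any rational function $f$ on $\mg$, the outgoing slope of $f$ at $x$ along $e$ and the outgoing slope of $f$ at $y$ along $e$ sum to $-c$, because along $e$ the slope of $f$ increases by exactly $c$ going from $x$ to $y$. For $(\Rightarrow)$, given $f$ with $\divi f=D$, I define $g$ on $\mg'$ to be $f|_A$ on $A$ and $f|_B+(f(x)-f(y))$ on $B$; these agree at $v$, so $g$ is a continuous piecewise-linear function with integer slopes (its slopes at $v$ being the slopes of $f$ at $x$ into $A$ and at $y$ into $B$). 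Off $v$ one has $\divi g=\divi f=D'$, and at $v$ the outgoing slopes of $g$ are exactly those of $f$ at $x$ into $A$ together with those at $y$ into $B$; subtracting the two slopes of $f$ along $e$ from $\divi f(x)+\divi f(y)$ and using the local fact gives $\divi g(v)=\divi f(x)+\divi f(y)+c=D(x)+D(y)+c=D'(v)$. Hence $D'\in\Prin\mg'$.

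For $(\Leftarrow)$, given $g$ with $\divi g=D'$, let $a$ and $b$ be the sums of the outgoing slopes of $g$ at $v$ along edges into $A$ and into $B$ respectively, so $a+b=D'(v)=D(x)+D(y)+c$. I build $f$ on $\mg$ by taking $f=g|_A$ on $A$; on $e=[x,y]$ taking $f$ to be the piecewise-linear function with $f(x)=g(v)$, outgoing slope $D(x)-a$ at $x$, and slope jump $D(z)$ at each of the finitely many $z\in e^\circ$ in the support of $D$ (so that $f$ has integer slopes on $e$); and on $B$ taking $f=g|_B+(f(y)-g(v))$, which matches at $y$. Then one checks that $f$ is a rational function on $\mg$ and that $\divi f=D$: it equals $D'$ off $\{x,y\}\cup e^\circ$, it equals $D$ on $e^\circ$ by construction, it equals $D(x)$ at $x$, and integrating the slope jumps shows the outgoing slope of $f$ at $y$ along $e$ is $a-D(x)-c$, so $\divi f(y)=b+(a-D(x)-c)=D(y)$, precisely because $a+b=D(x)+D(y)+c$. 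Thus $D\in\Prin\mg$.

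The substantive point — everything else being routine continuity and integer-slope checks — is the bookkeeping at the junction point $v$ and the bridge ends $x,y$: one must see that the degree $c$ of the part of $D$ in the interior of the bridge exactly accounts for the slope discrepancy introduced by the contraction, which is what makes the identity $a+b=D(x)+D(y)+c$ carry both directions. I expect this local slope accounting, rather than the induction or the global structure, to be the only place needing care.
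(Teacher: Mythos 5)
Your proof is correct and takes essentially the same route as the paper's: reduce by induction to contracting a single bridge, then transfer the rational function across the contraction by shifting one side by a constant ($f(x)-f(y)$) and, in the converse direction, choosing the bridge slope ($D(x)-a$, the paper's $m$) so that the divisor at the endpoints comes out right. The only difference is that you handle divisor support on the bridge interior directly through the slope jumps summing to $c$, whereas the paper arranges (via the induction) that $D$ is not supported there — a harmless variation.
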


\begin{proof}
By induction, we may assume that $\mg'$ was obtained by contracting a single bridge $e$ in $\mg$, and that $D$ is not supported on the interior of $e$. Let $e$ have endpoints $x_1$ and $x_2$, and for $i=1,2,$ let $\Gamma_i$ denote the connected component of $x_i$ in $\mg \setminus e$. Let $\Gamma_i' = \varphi(\Gamma_i)$, and let $x' = \varphi(x_1) = \varphi(x_2) = \varphi(e)$.

Now suppose $D \in \Prin \mg$, so let $D = \divi f$ for $f$ a rational function on $\mg$. Define $f'$ on $\mg$ as follows: let   $f'(\varphi(y)) = f(y)$ for $y \in \Gamma_1$, and $f'(\varphi(y)) = f(y)+f(x_1)-f(x_2)$ for $y \in \Gamma_2$. This uniquely defines a rational function $f'$ on $\mg'$, and one can check that $\divi f' = D'$.

Conversely, suppose $D' = \divi f' \in \Prin \mg'$. Let $$m = D(x_1) - \sum \text{outgoing slopes from } x' \text{ into } \Gamma_1'.$$ Now define $f$ on $\mg$ as follows: let $f(x) = f'(\varphi(x))$ if $x \in \Gamma_1$, let $f(x) = f'(\varphi(x)) + m \cdot l(e)$ if $x \in \Gamma_2$, and let $f$ be linear with slope $m$ along $e$. Then one can check that $f$ is a rational function on $\mg$ with $\divi f = D$.
\end{proof}

It follows from Lemma \ref{l:bridge} that the rank of divisors is preserved under contracting bridges.  The argument can be found in \cite[Corollaries 5.10, 5.11]{bn}, and we will not repeat it here. In particular, we obtain

\begin{corollary}
\label{c:bridge}
Let $\mg$ be a metric graph, and let $\mg'$ be the metric graph obtained from $\mg$ by contracting all bridges. Then $\mg$ is hyperelliptic if and only if $\mg'$ is hyperelliptic.
\end{corollary}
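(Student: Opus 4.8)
The plan is to reduce Corollary~\ref{c:bridge} to the statement that the pushforward of divisors along the bridge-contraction morphism $\varphi\colon\mg\to\mg'$ preserves rank; once that is established the corollary is immediate, since hyperellipticity is precisely the existence of an effective divisor of degree $2$ and rank $1$, and $\varphi_*D:=\sum_x D(x)\cdot\varphi(x)$ visibly preserves degree and effectivity. In the forward direction a degree-$2$ rank-$1$ divisor $D$ on $\mg$ yields the degree-$2$ rank-$1$ divisor $\varphi_*D$ on $\mg'$; conversely, because $\varphi$ is surjective, any degree-$2$ rank-$1$ divisor $D'$ on $\mg'$ lifts pointwise to an effective $D$ on $\mg$ of degree $2$ with $\varphi_*D=D'$, and then $r_\mg(D)=r_{\mg'}(D')=1$.

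So the substance is the rank-preservation claim, which I would prove by induction so that it suffices to contract a single bridge $e$, using Lemma~\ref{l:bridge} in both directions: $\varphi_*$ carries $\Prin\mg$ into $\Prin\mg'$ \emph{and} reflects it, so it is additive and descends to a map on linear-equivalence classes that both preserves and reflects $\sim$. For the inequality $r_{\mg'}(\varphi_*D)\ge r_\mg(D)$: take an effective $E'$ on $\mg'$ of degree at most $r_\mg(D)$, lift it to an effective $E$ on $\mg$ with $\varphi_*E=E'$, produce an effective $F$ on $\mg$ with $D-E\sim F$ (using that $r(D)\ge k$ already gives the defining property for all effective divisors of degree $\le k$, by padding with extra points), and apply $\varphi_*$ to get $\varphi_*D-E'\sim\varphi_*F\ge 0$. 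For the reverse inequality: take an effective $E$ on $\mg$ of degree at most $r_{\mg'}(\varphi_*D)$, push it to $E'=\varphi_*E$, produce an effective $F'$ on $\mg'$ with $\varphi_*D-E'\sim F'$, lift $F'$ to an effective $F$ on $\mg$ with $\varphi_*F=F'$, and note $\varphi_*(D-E-F)\in\Prin\mg'$, so by Lemma~\ref{l:bridge} $D-E-F\in\Prin\mg$, i.e.\ $D-E\sim F\ge 0$. Combining the two inequalities gives $r_\mg(D)=r_{\mg'}(\varphi_*D)$.

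I do not expect a genuine obstacle in the corollary itself: all of the analytic content — explicitly writing down the rational functions witnessing $D\in\Prin\mg\iff\varphi_*D\in\Prin\mg'$ — has already been handled in Lemma~\ref{l:bridge}, and what remains is the formal push-pull bookkeeping sketched above together with the trivial observations that $\varphi$ is surjective and $\varphi_*$ preserves degree and effectivity. The only place demanding a little care is the reduction inside Lemma~\ref{l:bridge} to divisors not supported on the interior of the contracted bridge (a point in a bridge's interior is itself a cut point, so one splits the bridge and contracts in two steps), but since that lemma is already in hand I may take it for granted.
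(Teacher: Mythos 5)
Your proposal is correct and takes essentially the same approach as the paper: the paper likewise deduces the corollary from Lemma~\ref{l:bridge} via preservation of divisor rank under the bridge-contraction pushforward, referring to \cite[Corollaries 5.10, 5.11]{bn} for precisely the push--pull bookkeeping you spell out. No gaps.
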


We can finally prove the main theorem of the section, which generalizes Theorem \ref{t:2ec}.
\begin{theorem}
\label{t:main}
Let $\mg$ be a metric graph with no points of valence 1, and let $(G,l)$ denote its canonical loopless model. Then the following are equivalent:
\begin{enumerate}
\item $\mg$ is hyperelliptic.
\item There exists an involution $i: G \to G$ such that $G/i$ is a tree.
\item There exists a nondegenerate harmonic morphism of degree 2 from $G$ to a tree, or $|V(G)|=2$.
\end{enumerate}
\end{theorem}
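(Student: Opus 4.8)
The plan is to reduce to the $2$‑edge‑connected case already settled in Theorem~\ref{t:2ec}, using the bridge–contraction results Lemma~\ref{l:bridge} and Corollary~\ref{c:bridge}. Let $\varphi\colon\mg\to\mg'$ be the contraction of all bridges of $\mg$. Contracting a bridge destroys that bridge and changes the bridge‑status of no other edge, so $\mg'$ is $2$‑edge‑connected; and since $\mg$ has no point of valence $1$, neither does $\mg'$. Writing $(G',l')$ for the canonical loopless model of $\mg'$, one checks that $G'=G/B$ with $B\subseteq E(G)$ the forest of bridges of $G$, and that $|V(G')|>2$ whenever $|V(G)|>2$ — for if $\mg$ had a bridge, the $\varphi$‑image of the bridge‑tree through it would be a cut vertex of $\mg'$ (here one uses that $\mg$ has no valence‑$1$ point), whereas a banana $B_n$ has no cut vertex. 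The case $|V(G)|=2$, in which $\mg=B_n$ with $n\ge 3$, is disposed of exactly as at the start of the proof of Theorem~\ref{t:2ec}: all three statements hold, hyperellipticity because $r(u+v)=1$ for the two vertices $u,v$. So I assume from now on that $|V(G)|>2$, hence $|V(G')|>2$.

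The implications (ii)$\Rightarrow$(iii) and (iii)$\Rightarrow$(i) need no change from Theorem~\ref{t:2ec}. The first is Lemma~\ref{l:qharmonic}: the quotient $G/i$ is not a single vertex because $|V(G)|>2$, so $\pi_{\langle i\rangle}$ is a nondegenerate harmonic morphism of degree $2$ onto the tree $G/i$. The second uses only $\Jac T=1$, Proposition~\ref{p:pushpull}, and nondegeneracy of the morphism — $2$‑edge‑connectivity of $\mg$ is never invoked — and the excluded alternative ``$\mg$ is a tree'' cannot arise, since $\mg$ has no valence‑$1$ point but has at least one edge. So the entire content of the theorem beyond Theorem~\ref{t:2ec} is the implication (i)$\Rightarrow$(ii).

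To prove (i)$\Rightarrow$(ii): if $\mg$ is hyperelliptic then so is $\mg'$ by Corollary~\ref{c:bridge}, so Theorem~\ref{t:2ec} produces an involution $i'$ of $G'$ with $G'/i'$ a tree; view $i'$ as an isometry of $\mg'$. By Corollary~\ref{c:5.14} it is the unique hyperelliptic involution of $\mg'$, so by Lemma~\ref{l:cut} it fixes every cut vertex of $\mg'$, in particular every point obtained by collapsing a bridge‑tree of $\mg$. I next claim $i'$ fixes each branch at such a point $p$, i.e.\ each connected component of $\mg'\setminus p$ together with $p$: such a branch necessarily contains a cycle (else, since $\mg'$ has no valence‑$1$ point, it would reduce to the single point $p$), and $i'$ cannot carry a branch containing a cycle onto a distinct branch, for the two would then project isomorphically onto a subgraph of the tree $\mg'/i'$. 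Granting this, define $i\colon\mg\to\mg$ to be the identity on each bridge‑tree $\varphi^{-1}(p)$ and to agree with $i'$ on every non‑bridge edge of $\mg$ and every vertex of $G$ meeting no bridge. These prescriptions agree at the vertices where bridge‑trees are attached, because those vertices map under $\varphi$ to the fixed points $p$ and $i'$ preserves the branches there; hence $i$ is a well‑defined, length‑preserving involution, i.e.\ an involution of $G$. Finally $\varphi$ descends to a bridge contraction $\mg/\langle i\rangle\to\mg'/\langle i'\rangle$ whose nontrivial fibres are exactly the bridge‑trees (which $i$ leaves pointwise fixed); since $\mg'/\langle i'\rangle$ is a tree and re‑expanding a vertex into a tree preserves the property of being a tree, $\mg/\langle i\rangle$, hence $G/i$, is a tree, and (ii) holds.

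The main obstacle — the only point beyond bookkeeping — is this lifting step: establishing that $i'$ cannot permute nontrivially the branches hanging from a collapsed bridge‑point (the cycle‑versus‑tree argument), and then checking that the local formula for $i$ genuinely patches together into an isometry of $\mg$. The remaining assertions are either citations of Theorem~\ref{t:2ec}, Lemma~\ref{l:qharmonic}, Corollary~\ref{c:bridge}, and Corollary~\ref{c:5.14}, or routine verifications about how bridge contraction interacts with canonical loopless models and with the quotient by $\langle i\rangle$.
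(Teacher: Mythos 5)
Your proof is correct and takes essentially the same route as the paper: contract all bridges, invoke Corollary~\ref{c:bridge} and Theorem~\ref{t:2ec} to get the hyperelliptic involution $i'$ on $\mg'$, use Lemma~\ref{l:cut} to see that the images of the bridge-trees are fixed, extend $i'$ to $\mg$ by fixing the bridges pointwise, and observe that $G/i$ contracts to the tree $\mg'/i'$. The only difference is that you make explicit the branch-preservation step (each branch at a collapsed cut vertex contains a cycle, so $i'$ cannot swap branches without forcing a cycle in the tree $\mg'/i'$), which is exactly the justification needed for the paper's compressed assertion that $i'$ extends to a well-defined involution of $\mg$.
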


\begin{proof}
As in the proof of Theorem \ref{t:2ec}, we may assume that $|V(G)|>2$. In fact, the proofs of (ii) $\Rightarrow$ (iii) and (iii) $\Rightarrow$ (i) from Theorem \ref{t:2ec} still hold here, since they do not rely on 2-edge-connectivity. We need only show (i) $\Rightarrow$ (ii).

Let $\mg$ be a hyperelliptic metric graph with no points of valence 1, and let $\mg'$ be obtained by contracting all bridges of $\mg$. Then $\mg'$ is hyperelliptic by Corollary \ref{c:bridge}. Since $\mg$ had no points of valence 1, the image of any bridge in $\mg$ is a cut vertex $x \in \mg'$. By Lemma \ref{l:cut}, all such vertices are fixed by the hyperelliptic involution $i'$ on $\mg'$. Thus, we can extend $i'$ uniquely to an involution $i$ on $\mg$ by fixing, pointwise, each bridge of $\mg$. Then $i$ is also an involution on the canonical loopless model $(G,l)$, and $G/i$ is a tree whose contraction by the images of the bridges of $G$ is the tree~$\mg'/i'$.
\end{proof}

\begin{remark}
The requirement that $\mg$ has no points of valence 1 is not important, because Corollary~\ref{c:bridge} allows us to contract such points away.  Also, because of Definition~\ref{d:thyp} and the stability condition on tropical curves, we will actually never encounter points of valence 1 in the tropical context.
\end{remark}

\section{The hyperelliptic locus in tropical $M_g$}
Which tropical curves of genus $g$ are hyperelliptic? In this section, we will use the main combinatorial tool we have developed, Theorem \ref{t:main}, to construct the hyperelliptic locus $H^\text{tr}_g$ in the moduli space $\Mtrg$ of tropical curves. The space $\Mtrg$ was defined in \cite{bmv} and computed explicitly for $g \leq 5$ in \cite{ch}. It is $(3g-3)$-dimensional and has the structure of a stacky fan, as defined in \cite[Definition 3.2]{ch}.

It is a well-known fact that the classical hyperelliptic locus $\mathcal{H}_g \subset \mathcal{M}_g$ has dimension $2g-1$. Therefore, it is surprising that $H^{\tr}_g$ is actually $(3g-3)$-dimensional, as observed in \cite{lpp}, especially given that tropicalization is a dimension-preserving operation in many important cases \cite{bg}. However, if one considers only 2-edge-connected tropical curves, then the resulting locus, denoted $H^{(2),\tr}_g$, is in fact $(2g-1)$-dimensional. The combinatorics of $H_g^{(2),\tr}$ is nice, too: in Theorem \ref{t:ladder} we prove that the $(2g-1)$-dimensional cells are graphs that we call ladders of genus $g$. See Definition~\ref{d:ladder} and Figure \ref{f:ladder}. We then explicitly compute the spaces $H_3^{\tr}$, $H_3^{(2),\tr}$ and~$H_4^{(2),\tr}$.

\subsection{Construction of $H_g^{\tr}$ and $H_g^{(2),\tr}$}

We will start by giving a general framework for constructing parameter spaces of tropical curves in which edges may be required to have the same length. We will see that the loci $H^{\tr}_g$ and $H^{(2),\tr}_g$ of hyperelliptic and 2-edge-connected hyperelliptic tropical curves, respectively, fit into this framework.

Recall that a {\bf combinatorial type} of a tropical curve is a pair $(G,w)$, where $G$ is a connected, non-metric multigraph, possibly with loops, and $w:V(G) \to \Z_{\geq 0}$ satisfies the stability condition that if $w(v) = 0$ then $v$ has valence at least 3. The {\bf genus} of $(G,w)$ is
\[ |E(G)|-|V(G)|+1+\sum_{v \in V(G)} w(v). \]

Now, a {\bf constrained type} is a triple $(G,w,r)$, where $(G,w)$ is a combinatorial type and $r$ is an equivalence relation on the edges of $G$. We regard $r$ as imposing the constraint that edges in the same equivalence class must have the same length.  Given a constrained type $(G,w,r)$ and a union of equivalence classes $S=\{e_1,\dots,e_k\}$ of $r$, define the contraction along $S$ as the constrained type $(G',w',r')$. Here, $(G',w')$ is the combinatorial type obtained by contracting all edges in $S$.
Contracting a loop, say at vertex $v$, means deleting it and adding $1$ to $w(v)$. Contracting a nonloop edge, say with endpoints $v_1$ and $v_2$, means deleting that edge and identifying $v_1$ and $v_2$ to obtain a new vertex whose weight is $w(v_1)+w(v_2)$. Finally, $r'$ is the restriction of $r$ to $E(G)\setminus S$.

An automorphism of $(G,w,r)$ is an automorphism $\varphi$ of the graph $G$ which is compatible with both $w$ and $r$. Thus, for every vertex $v \in V(G)$, we have $w(\varphi(v))=w(v)$, and for every pair of edges $e_1,e_2 \in E(G)$, we have that $e_1 \sim_r e_2$ if and only if $\varphi(e_1) \sim_r \varphi(e_2)$. Let $N_r$ denote the set of equivalence classes of $r$. Note that the group of automorphisms $\Aut(G,w,r)$ acts naturally on the set $N_r$, and hence on the orthant $\R^{N_r}_{\geq 0}$, with the latter action given by permuting coordinates. We define $\overline{C(G,w,r)}$ to be the topological quotient space
\[ \overline{C(G,w,r)} = \frac{\R^{N_r}_{\geq 0}}{\Aut(G,w,r)}. \]

Now, suppose $\mathcal{C}$ is a collection of constrained types that is closed under contraction. Define an equivalence relation $\sim$ on the points in the union
\[ \coprod_{(G,w,r)\in \mathcal{C}} \overline{C(G,w,r)} \]
as follows. Regard a point $x \in \overline{C(G,w,r)}$ as an assignment of lengths to the edges of $G$ such that $r$-equivalent edges have the same length. Given two points $x \in \overline{C(G,w,r)}$ and $x'\in\overline{C(G',w',r')}$, let $x \sim x'$ if the two tropical curves obtained by contracting all edges of length zero are isomorphic.

Now define the topological space $M_\mathcal{C}$ as
\[ M_\mathcal{C} = \coprod \overline{C(G,w,r)}/\sim, \]
where the disjoint union ranges over all types $(G,w,r)\in\mathcal{C}$.
Since $\mathcal{C}$ is closed under contraction, it follows that the points of $M_\mathcal{C}$ are in bijection with $r$-compatible assignments of positive lengths to $E(G)$ for some $(G,w,r) \in \mathcal{C}$.

\begin{theorem}
  \label{t:mc}
  Let $\mathcal{C}$ be a collection of constrained types, as defined above, that is closed under contraction.
  Then the space $M_\mathcal{C}$ is a stacky fan, with cells corresponding to types $(G,w,r)$ in $\mathcal{C}$.
\end{theorem}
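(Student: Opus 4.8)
The plan is to check that the topological space $M_\mathcal{C}$ produced by the construction meets the defining conditions of a stacky fan in \cite[Definition 3.2]{ch}; the argument is the abstract version of the proof that $\Mtrg$ itself is a stacky fan in \cite{ch}, which is the special case where $r$ is the trivial relation, so I will emphasize where $r$ enters. The local data is immediate: for each $(G,w,r)\in\mathcal{C}$ the group $\Aut(G,w,r)$ is finite, being a subgroup of $\Aut(G)$, and it acts on $\R^{N_r}$ by permuting coordinates while preserving the orthant $\R^{N_r}_{\geq 0}$, so each closed cell $\overline{C(G,w,r)}=\R^{N_r}_{\geq 0}/\Aut(G,w,r)$ is the quotient of a rational polyhedral cone by a finite group acting linearly, the basic building block of a stacky fan. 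Once a genus is fixed, $\mathcal{C}$ is finite, since contraction preserves genus, so there are only finitely many cells.

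Next I would exhibit the face morphisms coming from contraction. Given a union $S$ of $r$-classes of $(G,w,r)$ and the contraction $(G',w',r')$ along $S$, sending to $0$ the coordinates of $\R^{N_r}_{\geq 0}$ indexed by the classes in $S$ identifies $\R^{N_{r'}}_{\geq 0}$ with a face $F_S$ of $\R^{N_r}_{\geq 0}$. An automorphism in $\Aut(G,w,r)$ that stabilizes $S$ setwise is compatible with $w$ and $r$ in a way that descends to an automorphism of the contraction, so the stabilizer of $F_S$ in $\Aut(G,w,r)$ maps to $\Aut(G',w',r')$, and therefore the inclusion $F_S\hookrightarrow \R^{N_r}_{\geq 0}$ passes to a continuous map $\overline{C(G',w',r')}\to\overline{C(G,w,r)}$ that is a homeomorphism onto the closed stacky face of $\overline{C(G,w,r)}$ associated with $S$. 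Since $\mathcal{C}$ is closed under contraction, the target $(G',w',r')$ again lies in $\mathcal{C}$, so these face morphisms organize the closed cells into a diagram of cone-quotients and face inclusions, and $M_\mathcal{C}$ is, by construction, obtained by gluing the $\overline{C(G,w,r)}$ along their stacky faces according to this diagram.

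The core step is to check that the relation $\sim$ really coincides with the equivalence relation generated by these face morphisms. A point of $\overline{C(G,w,r)}$ amounts to an $r$-compatible length function on $E(G)$ modulo $\Aut(G,w,r)$, some coordinates of which may be $0$; contracting the length-$0$ edges produces --- using closure of $\mathcal{C}$ under contraction --- a type $(\bar G,\bar w,\bar r)\in\mathcal{C}$ together with a point in the relative interior of $\overline{C(\bar G,\bar w,\bar r)}$, so every point is the image, under a face morphism, of a relative-interior point of some cell. Granting that distinct types contribute disjoint relative-interior images, two relative-interior points are $\sim$-equivalent precisely when the underlying metric tropical curves, having all edge lengths positive, are isomorphic, equivalently precisely when an isomorphism of types --- or, when the types agree, an element of $\Aut(G,w,r)$ --- carries one to the other; this is exactly the identification the diagram imposes. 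Hence $M_\mathcal{C}=\bigl(\coprod\overline{C(G,w,r)}\bigr)/\sim$ with the quotient topology is a stacky fan, and its cells --- the images in $M_\mathcal{C}$ of the relative interiors of the $\overline{C(G,w,r)}$ --- are in bijection with the types $(G,w,r)\in\mathcal{C}$.

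I expect the only genuine work to be the bookkeeping around the finite group actions and the cell decomposition: one must check that each face inclusion is well defined after passing to $\Aut$-quotients; that a single cell can be glued to itself along two distinct faces, coming from automorphisms of a contraction that stabilize a face but do not extend to the whole type; that ``contract to isomorphic curves'' is generated by \emph{single} contractions, a connectivity statement in the poset of types ordered by contraction; and that distinct types in $\mathcal{C}$ have disjoint relative-interior images, which is where one uses that, for the collections $\mathcal{C}$ of interest here, the relation $r$ is pinned down by the combinatorial type $(G,w)$ via the essentially unique harmonic morphism, so that a positive-length tropical curve determines its type and lies in the interior of at most one cell. All of this is carried out in \cite{ch} in the case $r$ trivial, where $M_\mathcal{C}=\Mtrg$; the presence of $r$ changes nothing essential, since contraction is only ever performed along unions of $r$-classes and $r$ is transported along functorially, so it never obstructs a face identification.
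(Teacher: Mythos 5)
Your proposal is correct and follows essentially the same route as the paper: the paper's entire proof of Theorem \ref{t:mc} is a one-line remark that the argument is ``entirely analogous'' to the proof that $\Mtrg$ is a stacky fan in \cite[Theorem 3.4]{ch}, and what you have written is precisely that analogy spelled out, with the relation $r$ transported along contractions. Your closing bookkeeping list (face maps descending through the $\Aut$-quotients, self-gluings, and disjointness of cell interiors via the rigidity of $r$ for the hyperelliptic collections) identifies exactly the points the paper leaves implicit, so there is nothing to correct.
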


\begin{proof}
  The proof of Theorem \ref{t:mc} is entirely analogous to the proof that the moduli space of genus $g$ tropical curves is a stacky fan, so we refer the reader to \cite[Theorem 3.4]{ch}.
\end{proof}

\begin{remark}
Note that the stacky fan $M_\mathcal{C}$ is not in general a stacky subfan of $\Mtrg$.  Instead, it may include only parts of the cells of $\Mtrg$, since edges may be required to have equal length.
\end{remark}

Our next goal is to define the collections $\mathcal{C}_g$ and $\mathcal{C}_g^2$ of hyperelliptic and 2-edge-connected hyperelliptic types of genus $g$.  If $(G,w)$ is a combinatorial type, let $G^w$ denote the graph obtained from $G$ by adding $w(v)$ loops at each vertex $v$.  Let $G^w_-$ be the loopless graph obtained by adding a vertex to the interior of each loop in $G^w$.  Now let $G$ and $G'$ be loopless graphs.  Then a morphism $\phi:G\rightarrow G'$ is
 a map of sets
$V\left(G\right)\cup E\left(G\right){\rightarrow}V\left(G'\right)\cup E\left(G'\right)$
such that
\begin{enumerate}
	\item $\phi(V(G))\subseteq V(G')$,
	\item if $e=xy$ is an edge of $G$ and $\phi\left(e\right)\in V\left(G'\right)$ then $\phi\left(x\right)=\phi\left(e\right)=\phi\left(y\right)$, and
	\item if $e=xy$ is an edge of $G$ and $\phi(e)\in E\left(G'\right)$ then $\phi(e)$ is an edge between $\phi(x)$ and $\phi(y)$.
\end{enumerate}
The morphism $\phi$ is harmonic if the map $\bar{\phi}:(G,{\bf 1})\rightarrow (G',{\bf 1})$ of loopless models is a harmonic morphism, where ${\bf 1}$ denotes the function assigning length 1 to every edge.  The definitions above follow \cite{bn}.

\begin{definition}
A constrained type $(G,w,r)$ is {\bf 2-edge-connected hyperelliptic} if

\begin{enumerate}
	\item $G$ is 2-edge-connected,
	\item the loopless graph $G^w_-$ has a nondegenerate harmonic morphism $\phi$ of degree 2 to a tree, or $|V(G)|=2$, and
	\item the relation $r$ is induced by the fibers of $\phi$ on nonloop of $G$, and is trivial on the loops of $G$.
\end{enumerate}  
The type $(G,w,r)$ is said to be {\bf hyperelliptic} if $r$ is the trivial relation on bridges and the type  $(G',w',r')$ obtained by contracting all bridges is  2-edge-connected hyperelliptic in the sense we have just defined.
\end{definition}

Let $\mathcal{C}_g$ denote the collection of hyperelliptic types of genus $g$, and $\mathcal{C}_g^{(2)}$ the collection of 2-hyperelliptic types of genus $g$.

\begin{proposition}
The collections $\mathcal{C}_g$ and $\mathcal{C}_g^2$ of hyperelliptic and 2-edge-connected hyperelliptic types defined above are closed under contraction.
\end{proposition}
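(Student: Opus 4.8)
The plan is to reduce to the contraction of a single $r$-equivalence class and then show that a witnessing harmonic morphism descends through it. Since a contraction along a union $S$ of $r$-classes can be performed one class at a time, and $r'$ is by definition the restriction of $r$, induction reduces the problem to contracting a single class $S_0$. Moreover the genus $|E(G)|-|V(G)|+1+\sum_v w(v)$ is unchanged by contraction: contracting a loop at $v$ lowers $|E(G)|$ by $1$ and raises $w(v)$ by $1$, while contracting a non-loop edge lowers $|E(G)|$ and $|V(G)|$ by $1$ each and fixes $\sum w$. So in every case the contracted type again has genus $g$, and only the structural conditions must be checked.

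For $\mathcal{C}_g^{(2)}$, take a $2$-edge-connected hyperelliptic type $(G,w,r)$ with $|V(G)|>2$ and a nondegenerate degree-$2$ harmonic morphism $\phi\colon G^w_-\to T$ to a tree inducing $r$. If $S_0$ is a loop of $G$, then $(G')^{w'}=G^w$ — the loop deleted from $G$ is restored by the increment of $w$ at its vertex — so $(G')^{w'}_-=G^w_-$, the same $\phi$ witnesses the conditions, and $r'$ is still its fiber relation. If $S_0$ is an $r$-class of non-loop edges, it is a full fiber $\phi^{-1}(\tilde e)$ over some $\tilde e=\tilde x\tilde y\in E(T)$; because $\deg\phi=2$ this fiber has at most two edges, each of slope $1$, and $2$-edge-connectedness of $G$ forbids a one-element fiber (that edge would be a bridge). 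Hence $S_0=\{e,e'\}$ is a $2$-edge-cut of $G$, no weight half-loop of $G^w_-$ lies over $\tilde e$, and contracting $S_0$ in $G$ matches contracting $\phi^{-1}(\tilde e)$ in $G^w_-$, i.e.\ $(G')^{w'}_-=G^w_-/\phi^{-1}(\tilde e)$. Setting $T'=T/\tilde e$ (again a tree), $\phi$ descends to $\phi'\colon (G')^{w'}_-\to T'$, which one checks is harmonic of degree $2$ with fibers over $E(T')=E(T)\setminus\{\tilde e\}$ equal to the images of the fibers of $\phi$, so $\phi'$ induces exactly $r'$; nondegeneracy of $\phi'$ survives because each merged vertex over $[\tilde x]=[\tilde y]$ is the image of some vertex of $G^w_-$ carrying horizontal multiplicity through an edge other than $\tilde e$, the only exception being $T$ a single edge, which forces $|V(G')|\le 2$. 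Finally $G'=G/\{e,e'\}$ stays $2$-edge-connected: writing $G\setminus\{e,e'\}=A\sqcup B$, no edge inside $A$ (or $B$) can be a bridge of $G'$ — it is either already a bridge of $G$, impossible, or it has an alternate route through the two contracted arcs. The cases $|V(G)|\le 2$ are elementary: there $G$ is a banana graph with all edges $r$-equivalent, and the only nontrivial contraction yields a single vertex of weight $g$, whose canonical loopless model folds onto a star.

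For a general hyperelliptic type $(G,w,r)\in\mathcal{C}_g$, $r$ is trivial on bridges, so $S_0$ is either one bridge or a class of non-bridge edges. Contracting an edge set containing all the bridges can be done in either order, so contracting $S_0$ and then all bridges gives the same type as contracting all bridges and then the image of $S_0$. In the first case this is just the bridge-contraction of $(G,w,r)$, which lies in $\mathcal{C}_g^{(2)}$ by hypothesis; in the second case the image of $S_0$ is an $r'$-class of non-bridge edges of that $2$-edge-connected type, so the previous paragraph applies. Either way the bridge-contraction of $(G/S_0,w',r')$ lies in $\mathcal{C}_g^{(2)}$. It remains to see $r'$ is trivial on the bridges of $G/S_0$: a bridge of $G/S_0$ that was not a bridge of $G$ forms a $2$-edge-cut with some edge of $S_0$, hence lies in a single fiber of the descended morphism and so is fixed by the induced involution (cf.\ Lemma~\ref{l:cut}), hence is a singleton $r'$-class. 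Thus $(G/S_0,w',r')\in\mathcal{C}_g$, completing the induction.

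The step I expect to require the most care is the claim that contracting an $r$-class of $G$ is matched, upstairs, by contracting a single tree edge, together with the preservation of nondegeneracy near leaves of $T$: this rests on the interplay between the weight half-loops of $G^w_-$, the vertical edges of $\phi$, and the collapse of a whole horizontal fiber, and is closest in spirit to the fiber computations of \cite{bn}. The verification that $2$-edge-connectedness is preserved under contracting a $2$-edge-cut, and that any new bridge created downstairs is $i$-fixed, is the other point that needs a genuine (if short) argument.
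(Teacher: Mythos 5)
Your reduction to a single $r$-class and the genus bookkeeping are fine, and your loop case agrees with the paper's. But your case analysis of the remaining classes has a genuine gap: you assert that an $r$-class of non-loop edges must be a full fiber $\phi^{-1}(\tilde e)$ over a tree edge. This overlooks the \emph{vertical} non-loop edges, i.e.\ edges $e$ of $G$ with $\phi(e)\in V(T)$. Such an edge forms a singleton $r$-class (the rungs of a ladder are exactly of this type), it is not a bridge, and the collection must be closed under contracting it. Your claim that a singleton non-loop class is impossible ("that edge would be a bridge") conflates singleton classes with one-element horizontal fibers: with all lengths $1$ every fiber over a tree edge has exactly two edges, and the singleton non-loop classes are the vertical ones. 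Handling this case needs a genuinely new construction, which your proposal does not contain: the paper replaces $T$ by the tree obtained by attaching $k$ leaf edges at $\phi(e)$, where $k$ is the number of edges of $G$ parallel to $e$ (these become loops of $G/e$), and maps those new subdivided loops onto the new leaf edges so that the descended morphism is again harmonic, nondegenerate, of degree $2$ (Figure \ref{f:case2}).

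A second step that fails is your horizontal case. Setting $T'=T/\tilde e$ is not enough when the two edges $e,e'$ over $\tilde e$ share an endpoint (a vertex of horizontal multiplicity $2$): then vertical edges joining the two endpoints on the other side become loops of $G/\{e,e'\}$, so $(G/\{e,e'\})^{w'}_-$ acquires new subdivision vertices and is \emph{not} equal to $G^w_-/\phi^{-1}(\tilde e)$; under your descended map each such subdivided loop would be entirely vertical, its midpoint would have horizontal multiplicity $0$, and nondegeneracy fails. This is not confined to your exceptional case of $T$ a single edge (and even there, the contraction can leave $|V(G')|=1$, so the clause $|V(G)|=2$ in the definition does not apply and a morphism must actually be produced). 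The paper's construction fixes this by first attaching one leaf at an end of $\tilde e$ for each newly created loop, then contracting $\tilde e$, and sending each new loop onto its leaf edge (Figure \ref{f:case3}). Your commuting-contractions discussion for $\mathcal{C}_g$ is more elaborate than necessary (contraction never creates bridges, and the paper simply observes that $\mathcal{C}_g$ is defined by its bridge-contractions landing in $\mathcal{C}_g^{(2)}$), but that is not where the difficulty lies; the missing vertical case and the missing leaf-attachment in the horizontal case are.
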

\begin{proof}
We will check that $\mathcal{C}_g^{(2)}$ is closed under contraction. If so, then $\mathcal{C}_g$ is too, since it was defined precisely according to contractions to types in $\mathcal{C}_g^{(2)}$. Note also that by definition, contraction preserves the genus $g$.

Let $(G,w,r)$ be a 2-hyperelliptic type, and let
\[ \varphi: G^w_- \rightarrow T \]
be the unique harmonic morphism to a tree $T$ that is either nondegenerate of degree 2, or $T$ has 1 vertex and $G^w_-$ has 2 vertices. Let $S$ be a class of the relation $r$. Then there are three cases: either $S=\{e\}$ and $e$ is a loop; or $S = \{e\}$, $e$ is a nonloop edge, and $\varphi(e) \in V(T)$; or $S=\{e_1,e_2\}$ and $\varphi(e_1) = \varphi(e_2)$ is an edge of $T$.

Suppose $S = \{e\}$ and $e$ is a loop. Then the contraction of $(G,w,r)$ by $e$ is $(G/e,w',r|_{E(G/e)})$, where $w'$ is obtained from $w$ by adding 1 at $v$. Then
\[ (G/e)^{w'}_- = G^w_- \]
so the same morphism $\varphi$ shows that $(G/e,w',r|_{E(G/e)})$ is a 2-hyperelliptic type.

Suppose $S=\{e\}$, $e$ is a nonloop edge, and, regarding $e$ as an edge of $G^w_-$, we have $\varphi(e) = v \in V(T)$. Let $k$ be the number of edges in $G$ that are parallel to $e$, and let $T'$ be the tree obtained from $T$ by adding $k$ leaves at $v$. Then we may construct a morphism
\[ \varphi' : (G/e)^{w'}_- \rightarrow T' \]
which in particular sends the $k$ edges parallel to $e$ to the $k$ new leaf edges of $T'$, and which has the required properties. See Figure \ref{f:case2}.

Suppose $S=\{e_1,e_2\}$ where $e_1$ and $e_2$ are nonloop edges of $G$ and $\varphi(e_1)=\varphi(e_2)=e \in E(T)$. Now, contracting $\{e_1,e_2\}$ in $G^w_-$ may create new loops, say $k$ of them, as illustrated in Figure \ref{f:case3}. Let $T'$ be the tree obtained from $T$ by adding $k$ leaves at either end of the edge $e$ and then contracting $e$. Then we may construct a harmonic morphism
\[ \varphi': (G/\{e_1,e_2\})^w_- \rightarrow T' \]
which sends the $k$ new loops to the $k$ new leaves of $T'$, and which has the required properties.
\end{proof}

\begin{figure}[h!]
\includegraphics[width=3in]{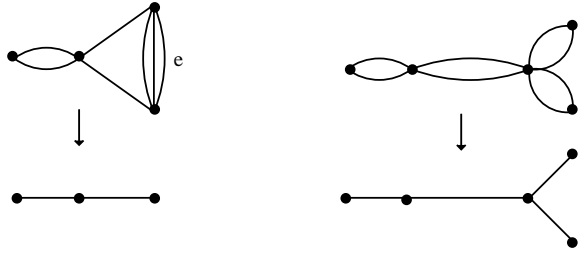}%
\caption{Contracting a vertical edge.}
\label{f:case2}
\end{figure}

\begin{figure}[h!]
\includegraphics[width=3in]{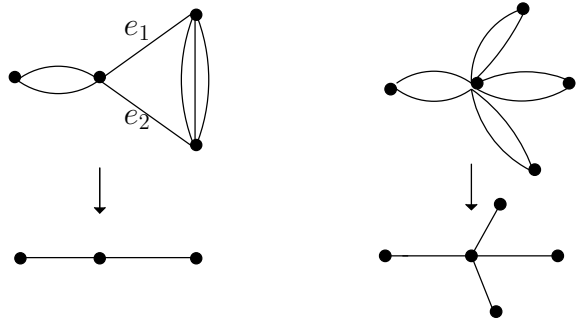}%
\caption{Contracting two horizontal edges.}
\label{f:case3}
\end{figure}

\begin{definition}
  The space $H_g^{\tr}$ of tropical hyperelliptic curves of genus $g$ is defined to be the space $M_{\mathcal{C}_g}$, where $\mathcal{C}_g$ is the collection of hyperelliptic combinatorial types of genus $g$ defined above.

The space $H^{(2),\tr}_g$ of 2-edge-connected tropical hyperelliptic curves of genus $g$ is defined to be the space $M_{\mathcal{C}^{(2)}_g}$, where $\mathcal{C}^{(2)}_g$ is the collection of 2-edge-connected hyperelliptic combinatorial types of genus $g$ defined above.
\end{definition}

\begin{proposition} $ $
\begin{enumerate}
	\item The points of $H_g^{(2),\tr}$ are in bijection with the 2-edge-connected hyperelliptic tropical curves of genus $g$.
\item The points of $H_g^{\tr}$ are in bijection with the hyperelliptic tropical curves of genus $g$.

\end{enumerate}
\end{proposition}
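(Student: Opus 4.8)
The plan is to derive both statements from Theorem~\ref{t:main} and from the description of $M_{\mathcal C}$ given just before Theorem~\ref{t:mc}. Since $\mathcal{C}_g^{(2)}$ is closed under contraction, the points of $H_g^{(2),\tr}=M_{\mathcal{C}_g^{(2)}}$ are in bijection with isomorphism classes of tuples $(G,w,l)$ in which $(G,w,r)$ is a $2$-edge-connected hyperelliptic constrained type (for a suitable relation $r$) and $l$ is a positive, $r$-compatible length function; the injectivity of this correspondence is precisely the $\sim$-identification used to build $M_{\mathcal C}$. So I would reduce part~(1) to the claim that a genus-$g$ tropical curve admits such a presentation if and only if it is $2$-edge-connected and hyperelliptic, prove the two implications, and then deduce part~(2) from part~(1) via Corollary~\ref{c:bridge}.

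For the forward implication, suppose $(G,w,r)\in\mathcal{C}_g^{(2)}$ and $l$ is positive and $r$-compatible; let $\mg$ be the underlying metric graph. Genus $g$ and $2$-edge-connectedness are immediate, the latter because $G$ is $2$-edge-connected and $2$-edge-connectivity does not depend on the model. To see $\mg$ is hyperelliptic I would apply Theorem~\ref{t:main} to $\mg^w$, which has no valence-$1$ points by the stability condition and whose canonical loopless model is $G^w_-$; the degenerate case $|V(G^w_-)|=2$ is covered by the corresponding clause of Theorem~\ref{t:2ec}, so assume the definition of $\mathcal{C}_g^{(2)}$ supplies a nondegenerate degree-$2$ harmonic morphism of graphs $\phi\colon G^w_-\to T$ with $T$ a tree. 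The key step is to upgrade $\phi$ to a harmonic morphism of loopless models: equip $G^w_-$ with the length function $\tilde l$ agreeing with $l$ on $E(G)$ and arbitrary on the half-edges of subdivided loops, equip $T$ with $l'(\phi(e)):=\tilde l(e)$, and declare every slope to be $1$. This is well-defined precisely because $l$ is $r$-compatible while $r$ records the fibers of $\phi$ on the nonloop edges of $G$, and because a \emph{horizontal} half-edge of a subdivided loop is the only edge in its $\phi$-fiber (here one uses $\deg\phi=2$ and that $T$ is a tree). With all slopes equal to $1$ the upgraded morphism is still nondegenerate and harmonic of degree $2$, so Theorem~\ref{t:main}, (iii)$\Rightarrow$(i), gives that $\mg^w$, hence $\mg$, is hyperelliptic.

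For the reverse implication, I would take a $2$-edge-connected hyperelliptic tropical curve of genus $g$ with metric graph $\mg$, and choose a model $(G,w)$ for it with $G$ $2$-edge-connected and $G^w_-$ equal to the canonical loopless model of $\mg^w$. By Theorem~\ref{t:main}, (i)$\Rightarrow$(ii) (again the case $|V(G^w_-)|=2$ being immediate), there is an involution $i$ on $G^w_-$ with $G^w_-/i$ a tree. The crucial observation is that, because $\mg^w$ is $2$-edge-connected, $i$ fixes no edge of $G^w_-$ pointwise: such an edge $e$ would satisfy $i(e)=e$ with both endpoints fixed, hence would descend to an edge $[e]$ of $G^w_-/i$ with $\pi_i^{-1}([e])=\{e\}$, contradicting Claim~\ref{c:2edges}. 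Therefore the quotient morphism $\pi_i\colon G^w_-\to G^w_-/i$ of Lemma~\ref{l:qharmonic} has slope $|\Stab(e)|=1$ on every horizontal edge, so on forgetting lengths it is a nondegenerate degree-$2$ harmonic morphism of \emph{graphs} onto the tree $G^w_-/i$. Taking $\phi:=\pi_i$ and letting $r$ be the relation it induces on the nonloop edges of $G$ exhibits $(G,w,r)\in\mathcal{C}_g^{(2)}$, and $l$ is $r$-compatible since $r$-equivalent edges lie in a single orbit of the isometry $i$. This proves part~(1). Part~(2) follows by the same scheme: Corollary~\ref{c:bridge}, applied to $\mg^w$ (whose bridges coincide with those of $\mg$), says a genus-$g$ tropical curve is hyperelliptic if and only if its bridge-contraction is a $2$-edge-connected hyperelliptic tropical curve, while by definition $(G,w,r)\in\mathcal{C}_g$ if and only if $r$ is trivial on the bridges and the bridge-contraction of $(G,w,r)$ lies in $\mathcal{C}_g^{(2)}$; combining these with part~(1) gives the stated bijection.

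I expect the main obstacle to be reconciling the \emph{combinatorial} harmonic morphisms (of graphs, with unit lengths) that define $\mathcal{C}_g^{(2)}$ with the harmonic morphisms of metric graphs in Theorem~\ref{t:main}, where quotient edges may be stretched by integer factors. The forward implication uses $r$-compatibility to realize the combinatorial $\phi$ metrically with all slopes $1$; the reverse implication uses the fact, read off from the proof of Theorem~\ref{t:2ec}, that the hyperelliptic involution of a $2$-edge-connected metric graph never fixes an edge pointwise, so that its quotient morphism is already combinatorially harmonic. The remaining points --- the degenerate cases $|V(G^w_-)|=2$, the freedom to choose the lengths of the loops added for vertex weights, and the bridge bookkeeping in part~(2) --- should be routine once this dictionary is in place.
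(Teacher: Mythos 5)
Your overall route is the same as the paper's: the forward direction upgrades the combinatorial degree-$2$ harmonic morphism $G^w_-\to T$ to a harmonic morphism of loopless models using only stretching factor $1$ and then applies Theorem~\ref{t:2ec}(iii)$\Rightarrow$(i); the reverse direction reverses that construction via the hyperelliptic involution; part (2) is deduced from part (1) with Corollary~\ref{c:bridge}. Your reverse direction is in fact more explicit than the paper's one-line ``reversing the construction'': the observation that the (unique, by Corollary~\ref{c:5.14}) involution fixes no edge pointwise, via Claim~\ref{c:2edges}, is exactly what makes the quotient morphism combinatorially harmonic with unit stretching factors, and it is a good point to spell out.

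One justification in your forward direction is wrong as stated, though easily repaired. A horizontal half-edge of a subdivided loop is \emph{not} the only edge in its $\phi$-fiber: since $T$ is a tree and all stretching factors are $1$, if one half of a subdivided loop is horizontal then the other half is horizontal as well and maps to the same tree edge (the midpoint must go to a leaf of $T$, and degree $2$ says these two halves are the whole fiber). Consequently you cannot put ``arbitrary'' lengths on the half-edges and declare all slopes to be $1$: well-definedness of $l''$ forces the two halves of each loop to have equal length, and for the loops of $G$ (as opposed to the weight loops) the two halves must moreover sum to $l(e)$ so that the metric graph you build is actually $\mg^w$. This is precisely why the paper takes $l'(e)/2$ on each half of a loop of $G$ and a common value (uniformly $1$) on the added weight loops; with that choice your argument goes through verbatim. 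A final cosmetic difference: you treat injectivity as built into the equivalence $\sim$ defining $M_{\mathcal C}$, whereas the paper records it by appealing to the uniqueness of the hyperelliptic involution (Corollary~\ref{c:5.14}); either bookkeeping is acceptable once surjectivity is established as you do.
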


\begin{proof}
Regard a point in $H_g^{(2),\tr}$ as an assignment $l':E(G)\rightarrow \R$ of positive lengths to the edges of $G$, where $(G,w,r)$ is some 2-edge-connected hyperelliptic type.  Then there is a degree 2 harmonic morphism of loopless graphs $G^w_-\rightarrow T$ inducing the relation $r$ on $E(G)$, or $|V(G)|=2$ in which case $(G,l)$ is clearly hyperelliptic.  Then there is a a degree 2 harmonic morphism of loopless models $\phi:(G^w_-, l)\rightarrow (T,l'')$.  Here, $l$ agrees with $l'$ on nonloop edges of $G$, is uniformly 1, say, on the $2 w(v)$ added half-loops of $G^w_-$ at each vertex, and is $l'(e)/2$ on the half-loops of $G^w_-$ corresponding to each loop $e\in E(G)$.  Furthermore, $l''$ is defined in the natural way, so that the harmonic morphism $\phi$ uses only the stretching factor 1.  

Then by Theorem~\ref{t:2ec}, the loopless model we constructed is 2-edge-connected hyperelliptic. Reversing the construction above shows that the map from $H_g^{(2),\tr}$ to the set of 2-edge-connected tropical hyperelliptic curves is surjective.  On the other hand, it is injective by Corollary~\ref{c:5.14}.
Finally, part (ii) follows from part (i), using Corollary~\ref{c:bridge} and the fact that the  hyperelliptic types are constructed by adding bridges to 2-edge-connected hyperelliptic~types.
\end{proof}

\subsection{Maximal cells of $H_g^{(2),\tr}$}
Now we will prove that $H_g^{(2),\tr}$ is pure of dimension $2g-1$, and we will characterize its maximal cells. Recall that the dimension of a cell of the form $\R^N_{\geq 0} / G$ is equal to $N$, and the dimension of a stacky fan is the largest dimension of one of its cells. It is {\bf pure} if all of its maximal cells have the same dimension.

First, we define the graphs which, as it turns out, correspond to the maximal cells of $H_g^{(2),\tr}$.

\begin{definition}\label{d:ladder}
Let $T$ be any nontrivial tree with maximum valence $\leq 3$. Construct a graph $L(T)$ as follows. Take two disjoint copies of $T$, say with vertex sets $\{v_1,\dots,v_n\}$ and $\{v_1',\dots,v_n'\}$, ordered such that $v_i$ and $v_i'$ correspond. Now, for each $i=1,\dots,n$, consider the degree $d= \deg v_i = \deg v_i'$. If $d=1$, add two edges between $v_i$ and $v_i'$. If $d =2$, add one edge between $v_i$ and $v_i'$. The resulting graph $L(T)$ is a loopless connected graph, and by construction, every vertex has valence 3. We call a graph of the form $L(T)$ for some tree $T$ a {\bf ladder}. See Figure \ref{f:ladder}.
\end{definition}

\begin{figure}[h!]
\includegraphics[height=.6in]{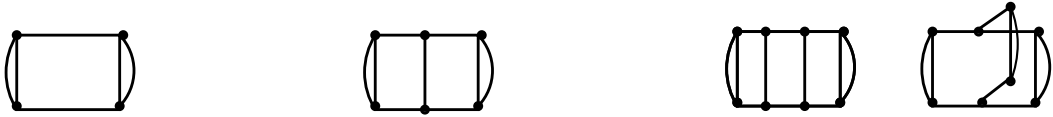}%
  \caption{The ladders of genus 3, 4, and 5.}
  \label{f:ladder}
\end{figure}

\begin{lemma}\label{l:laddergenus}
Let $T$ be a tree on $n$ vertices with maximum degree at most 3. Then the genus of the graph $L(T)$ is $n+1$.
\end{lemma}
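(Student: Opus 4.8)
The plan is a direct Euler-characteristic count. Since $L(T)$ is a connected graph with all vertex weights zero, its genus is simply the first Betti number $|E(L(T))| - |V(L(T))| + 1$, so it suffices to count vertices and edges. First I would record that $L(T)$ consists of two disjoint copies of $T$, hence $|V(L(T))| = 2n$, and that the edges of $L(T)$ come in two kinds: the $2(n-1)$ edges inherited from the two copies of $T$ (a tree on $n$ vertices has $n-1$ edges), together with the ``rung'' edges joining $v_i$ to $v_i'$.

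The key step is counting the rungs. By the construction in Definition~\ref{d:ladder}, a vertex $v_i$ of degree $d = \deg v_i \in \{1,2,3\}$ contributes exactly $3-d$ rung edges: two rungs when $d=1$, one rung when $d=2$, and none when $d=3$. Therefore the total number of rungs is
\[
\sum_{v \in V(T)} \bigl(3 - \deg v\bigr) = 3n - \sum_{v\in V(T)} \deg v = 3n - 2(n-1) = n+2,
\]
where the middle equality is the handshake lemma $\sum_{v} \deg v = 2|E(T)| = 2(n-1)$. (Here one uses that $T$ has maximum degree at most $3$ precisely so that every vertex falls into one of the three cases of Definition~\ref{d:ladder}; this is the only place that hypothesis enters.)

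Combining, $|E(L(T))| = 2(n-1) + (n+2) = 3n$, and so the genus of $L(T)$ equals $3n - 2n + 1 = n+1$, as claimed. I do not expect any real obstacle here: the argument is a finite bookkeeping computation, and the only point requiring a moment's care is the uniform formula ``$3 - \deg v_i$ rungs at $v_i$,'' which lets the handshake lemma finish the count cleanly.
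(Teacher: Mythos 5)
Your proof is correct and follows essentially the same route as the paper: count the $2n$ vertices and the $2(n-1)$ tree edges plus rungs, use the handshake lemma in $T$, and compute the first Betti number. The only difference is cosmetic — you package the rung count as $\sum_v (3-\deg v)$ where the paper writes it as $2n_1+n_2$ and eliminates the $n_i$ afterwards.
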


\begin{proof}
For $i=1,2,3$, let $n_i$ denote the number of vertices in $T$ of degree $i$. Then $L(T)$ has $2n$ vertices and $2(n-1)+2n_1+n_2$ edges; hence its genus is $g=2n_1+n_2-1$. Also, double-counting vertex-edge incidences in $T$ gives $2(n-1)=n_1+2n_2+3n_3$. Adding, we have
\[ g+2(n-1)=3n_1+3n_2+3n_3-1 = 3n-1, \]
so $g=n+1$.
\end{proof}

\begin{theorem}
\label{t:ladder}
Fix $g \geq 3$. The space $H_g^{(2),\tr}$ of 2-edge-connected hyperelliptic tropical curves of genus $g$ is a stacky fan which is pure of dimension $2g-1$. The maximal cells correspond to ladders of genus $g$.
\end{theorem}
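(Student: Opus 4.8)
The plan is to apply Theorem~\ref{t:mc}: $H_g^{(2),\tr}=M_{\mathcal C^{(2)}_g}$ is a stacky fan whose cells are the constrained types $(G,w,r)\in\mathcal C^{(2)}_g$, the cell $\overline{C(G,w,r)}$ having dimension $|N_r|$. So the statement reduces to two claims. \emph{(a)} For every $(G,w,r)\in\mathcal C^{(2)}_g$ one has $|N_r|\le 2g-1$, with equality exactly when $(G,w,r)$ is a \emph{ladder type}: $w\equiv 0$, $G=L(T)$ for a tree $T$ on $g-1$ vertices of maximum valence $3$, and $r$ the relation identifying the two rail-copies of each edge of $T$. \emph{(b)} Every cell of $M_{\mathcal C^{(2)}_g}$ is a face of a ladder cell, i.e.\ a contraction of a ladder type along a union of equivalence classes. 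Granting (a) and (b): the ladder types are precisely the cells of dimension $2g-1$; each is maximal, since a nontrivial contraction strictly lowers $|N_r|$, so no ladder type is a proper contraction of another cell; and by (b) every non-ladder cell is a proper face of a ladder cell, hence not maximal. Thus the maximal cells are exactly the ladders of genus $g$, all of dimension $2g-1$, and $M_{\mathcal C^{(2)}_g}$ is pure.

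For (a), fix $(G,w,r)$. If $|V(G)|=2$ then $|N_r|\le|E(G)|\le g+1<2g-1$ since $g\ge 3$, so assume $|V(G)|>2$ and fix a nondegenerate degree-$2$ harmonic morphism $\phi\colon G^w_-\to T$, $T$ a tree (taken equal to the image), and classify the edges of $G^w_-$ as horizontal or vertical. I would first record, by chasing the local multiplicity and degree conditions, three structural facts: (i) for every $v\in V(T)$, $\sum_{x\in\phi^{-1}(v)}m_\phi(x)=\deg\phi=2$, so the fiber is two unramified points or one ramification point (a point $x$ with $m_\phi(x)=2$); (ii) since $G$ is $2$-edge-connected, $G^w_-$ is bridgeless, and any slope-$2$ edge of $G^w_-$ would be a bridge (its image is a separating edge of $T$ of which it is the unique preimage), so there are \emph{no} slope-$2$ edges and every edge of $T$ has exactly two slope-$1$ preimages; (iii) the midpoint of any loop of $G$ or weight-loop has valence $2$, and nondegeneracy forces both of its edges to be horizontal with common image a leaf-edge of $T$ used by no other edge, whence that midpoint is a ramification point. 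Let $\lambda=\#\{\text{loops of }G\}$, $W=\sum_v w(v)$, $\alpha=\#\{\text{ramification points of }\phi\}$, and let $\nu$ be the number of vertical edges. From the definition of $r$ one gets $|N_r|=|E(T)|-W+\nu$; the Euler characteristic of $G^w_-\to T$ together with (ii) gives $\nu=g+1-\alpha$; and $|V(G^w_-)|=|V(G)|+\lambda+W=2|V(T)|-\alpha$, while stability ($\sum_v\deg_G v=2|E(G)|$, $b_1(G)=g-W$) gives $|V(G)|\le 2g-2$. Combining these,
\[ |N_r|=g+\tfrac12\bigl(|V(G)|+\lambda-W-\alpha\bigr)\le 2g-1, \]
using $\alpha\ge\lambda$ from (iii). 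Tracing equality: it forces $|V(G)|=2g-2$ (so $G$ is $3$-regular and $w\equiv0$, hence $W=0$) and $\lambda=\alpha$; but a loop of $G$ contributes both its midpoint and its $3$-valent base vertex to $\alpha$, so $\lambda=\alpha$ forces $\lambda=0$, hence $\alpha=0$. Then $\phi$ is an unramified double cover of $T$ with vertical edges added, $|V(T)|=g-1$, and $3$-regularity together with simple-connectedness of $T$ identifies $G=L(T)$ (two copies of $T$ joined by $3-\deg_T v$ parallel edges over each $v$, forcing $\deg_T\le3$) with $r$ the rail relation; by Lemma~\ref{l:laddergenus} this is a ladder of genus $g$.

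For (b), I would reverse the resolution implicit in (a) by repeatedly performing ``uncontraction'' moves that raise $|N_r|$ by one and stay in $\mathcal C^{(2)}_g$, until no edge can be added. The basic moves: trade a unit of weight at $v$ for a loop at $v$ (legitimate since $(G+\ell_v)^{w-e_v}_-=G^w_-$, so $\phi$, $2$-edge-connectedness and $r$ are unchanged, and contracting $\ell_v$ recovers the original type); and split off a loop of $G$, or a vertex of valence $>3$, or a ramification point of $\phi$, into the local ladder pictures of Figures~\ref{f:case2} and~\ref{f:case3} already used in proving that $\mathcal C^{(2)}_g$ is closed under contraction, each such local replacement being a contraction read backwards. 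When the process terminates the degree and ramification estimates of (a) force the type to be $3$-regular with $\phi$ unramified, i.e.\ a ladder $(L(T),0,r_T)$ with $|V(T)|=g-1$; reading the moves backwards exhibits the original type as a contraction of it, hence its cell as a face of the $(2g-1)$-dimensional ladder cell.

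The main obstacle is the bookkeeping in (b): one must verify that \emph{every} type with $|N_r|<2g-1$ admits some legal uncontraction move, handling uniformly vertex weights, loops of $G$, vertices of valence $>3$, and ramification points of $\phi$, and checking at each step that $2$-edge-connectedness, the existence of the nondegenerate degree-$2$ harmonic morphism, and the induced relation $r$ are all preserved. By contrast, the inequality and equality analysis in (a) is comparatively routine once the structural facts (i)--(iii) are established.
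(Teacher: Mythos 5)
Your proposal is correct in outline, and its first half is genuinely different from the paper. The paper never proves a global dimension bound: it takes a maximal cell and shows directly, by local enlargement moves (trading weight for loops, splitting a vertex of horizontal multiplicity $2$ as in Figure~\ref{f:split}, and horizontal splits as in Figure~\ref{f:horizontal}), that its type must be a ladder, and only afterwards computes the dimension of a ladder cell as $(g-2)+2n_1+n_2=2g-1$. Your part (a) --- the formula $|N_r|=g+\tfrac12\bigl(|V(G)|+\lambda-W-\alpha\bigr)$ obtained from the fiber counts $|V(G^w_-)|=2|V(T)|-\alpha$, $|E(G^w_-)|=2|E(T)|+\nu$, together with the stability bound $|V(G)|\le 2g-2$ --- gives the upper bound $2g-1$ for \emph{every} cell, with equality exactly at ladder types; the paper gets this only a posteriori. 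I checked your bookkeeping and it is correct, with two small slips worth fixing: equality forces $W=0$ and $\alpha=\lambda$ first, and only then $3$-regularity ($|V(G)|=2g-2$ alone still permits vertices with $w=1$ and degree $1$, since the paper's stability condition constrains only weight-zero vertices); and the ``slope-$2$ edge'' discussion is vacuous here, because the morphism in the definition of $\mathcal{C}_g^{(2)}$ uses unit edge lengths, so every horizontal edge has $\mu_\phi=1$ and the two-preimages-per-edge statement follows immediately from $\deg\phi=2$.

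Where you and the paper coincide --- and where the paper spends essentially all of its effort --- is your part (b): verifying that each enlargement move is legal, i.e.\ that the split graph is again stable and $2$-edge-connected, carries a nondegenerate degree-$2$ harmonic morphism to a modified tree inducing the correct relation $r'$, and has the original type as a contraction along $r'$-classes. The delicate case, which is why the paper proves its multiplicity-one claim ``carefully,'' is a ramification point that is the midpoint of a loop (one must split the base vertex instead, suppressing the degree-$2$ midpoints). As written, (b) defers exactly this verification, so it is a sketch rather than a proof. Note, however, that the theorem needs only the one-step statement that every non-ladder type with $|V(G)|>2$ admits a single legal enlargement: your equality analysis already pinpoints which move applies ($W>0$: weight-to-loop; a ramification point: vertical split; otherwise $w\equiv 0$ with some vertex of valence $\ge 4$: horizontal split), and the $2$-vertex type is directly a face of every ladder cell. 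Carrying out those three checks would complete your argument and would essentially reproduce the second half of the paper's proof, while your part (a) replaces (and slightly strengthens) its dimension count.
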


\begin{remark}
Note that the stacky fan  $H_g^{(2),\tr}$ is naturally a closed subset of $\Mtrg$, but it is not a stacky subfan because sometimes edges are required to have equal lengths.  So its stacky fan structure is more refined than that of $\Mtrg$.  Compare Figure \ref{f:h23} and \cite[Figure 1]{ch}.
\end{remark}

\begin{proof}
Let $(G,w,r) \in \mathcal{C}_g^{(2)}$ be the type of a maximal cell in $H_g^{(2),\tr}$. Our goal is to show that $w \equiv 0$, that $G=L(T)$ for some tree $T$, and that $r$ is the relation on $E(G)$ induced by the natural harmonic morphism
\[ \varphi_T: L(T) \rightarrow T \]
of degree 2.

First, we observe that the dimension of the cell $\overline{C(G,w,r)}$ is, by construction, the number of equivalence classes of $r$. Now, we have, by definition of $\mathcal{C}_g^{(2)}$, a morphism
\[ \varphi: G^w_- \rightarrow T \]
where either $G^w_-$ has 2 vertices and $T$ is trivial, or $T$ is nontrivial and $\varphi$ is a nondegenerate harmonic morphism of degree 2.

We immediately see that $w$ is uniformly zero, for otherwise the cell $\overline{C(G^w,\underline{0},r')}$ would contain $\overline{C(G,w,r)}$, contradicting maximality of the latter. Here, $r'$ denotes the relation on $E(G^w)$ that is $r$ on $E(G)$ and trivial on the added loops of $G^w$.

Let us also dispense with the special case that $G_-$ has 2 vertices: if so, then $G$ is the unique graph consisting of $g+1$ parallel edges. But then our cell $\overline{C(G,\underline{0},r)}$ is far from maximal; in fact, it is contained in each cell corresponding to a ladder.

Therefore we may assume that $T$ is nontrivial and $\varphi: G_- \to T$ is a nondegenerate degree 2 harmonic morphism.

Next, we claim that every vertex $v \in V(G_-)$ has horizontal multiplicity $m_\varphi(v)=1$. This claim shows in particular that $G$ has no loops. In fact, the intuition behind the claim is simple: if $m_\varphi(v)=2$, then split $v$ into two adjacent vertices $v'$ and $v''$ with $\varphi(v')=\varphi(v'')$ to make a larger cell. However, because $G$ might a priori contain loops and thus $G_-$ might have vertices of degree 2, we will need to prove the claim carefully.  The proof is illustrated in Figure~\ref{f:split}.

To prove the claim, suppose $v \in V(G_-)$ is such that $m_\varphi(v)=2$. Let us assume that $\deg(v)>2$, i.e. that $v$ is not the midpoint of some loop $l$ of $G$, for if it is, then pick the basepoint of $l$ instead. Let $w = \varphi(v)$. Let $e_1,\dots,e_k \in E(T)$ be the edges of $T$ incident to $w$, and let $w_1,\dots,w_k$ denote their respective endpoints that are different from $w$. Now, for each $i=1,\dots,k$, the set $\varphi^{-1}(e_i)$ consists of two edges of $G_-$; call them $a_i$ and $b_i$. By renumbering them, we may assume that the first $j$ of the pairs form loops in $G$. That is, if $1 \leq i \leq j$, then the edges $a_i$ and $b_i$ have common endpoints $v_i$ and $v$, with $\deg(v_i)=2$.

Let us construct a new graph $G'$ and relation $r'$ such that $(G,\underline{0},r)$ is a contraction of $(G',\underline{0},r')$. Replace the vertex $v$ with two vertices, $v_a$ and $v_b$, where $v_a$ is incident to edges $a_1,\dots,a_k$ and $v_b$ is incident to edges $b_1,\dots,b_k$. Now suppress the degree 2 vertices $v_1,\dots,v_j$, so that for $1 \leq i \leq j$, the pair of edges $\{a_i,b_i\}$ becomes a single edge $e_i$. Finally, add an edge $e$ between $v_a$ and $v_b$. Call the resulting graph $G'$. See Figure \ref{f:split}.

\begin{figure}[h!]
\includegraphics[height=1.5in]{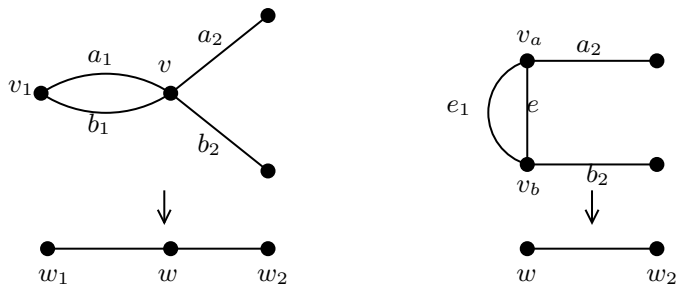}%
\caption{Splitting a vertex $v$ of horizontal multiplicity 2 in the proof of Theorem \ref{t:ladder}.}
\label{f:split}
\end{figure}

By construction, the graph $G'$ is a stable, 2-edge-connected graph with $G'/e=G$ and with the same genus as $G$. Let $T'$ be the tree obtained from $T$ by deleting $e_1,\dots,e_j$. Then there is a natural degree 2 harmonic morphism
\[ \varphi': G'_- \to T' \]
as shown in Figure \ref{f:split}, inducing a relation $r'$ on $E(G')$. Finally, note that $\overline{C(G',\underline{0},r')} \supseteq \overline{C(G,\underline{0},r)}$ is a larger cell in $H_g^{(2),\tr}$, contradiction. We have proved the claim that every $v \in V(G_-)$ satisfies $m_\varphi(v)=1$. As a consequence, every vertex of $T$ has precisely two preimages under $\varphi$. Hence $G=G_-$ has no loops.

Next, we claim that $G$ consists of two disjoint copies of $T$, say $T_1$ and $T_2$, that are sent by $\varphi$ isomorphically to $T$, plus some vertical edges. The intuition is clear: the harmonicity of $\varphi$ implies that the horizontal edges of $G$ form a twofold cover of $T$, which must be two copies of $T$ since $T$ is contractible. More formally, pick any $v \in V(T)$ and let $\varphi^{-1}(v) = \{v_1,v_2\}$. For $i=1,2$, let $T_i$ be the union of all paths from $v_i$ using only horizontal edges of $G$. Since no horizontal edges leave $T_i$, we have $\varphi(T_i)=T$, and since $m_\varphi(x)=1$ for all $x \in V(G)$, it follows that $T_1$ and $T_2$ are disjoint and that
\[ \varphi|_{T_i}: T \rightarrow T_i \]
is harmonic of degree 1 and nondegenerate, hence an isomorphism. Finally, since each edge $e \in E(T)$ has only two preimages, $T_1$ and $T_2$ account for all of the horizontal edges in $G$, and hence only vertical edges are left.

Where can the vertical edges of $G$ be? Let $v \in V(T)$ be any vertex. If $v$ has degree 1, then $G$ must have at least two vertical edges above $v$, and if $v$ has degree 2, then $G$ must have at least one vertical edge above $v$, for otherwise $G$ would not be stable. In fact, we claim that $G$ cannot have any vertical edges other than the aforementioned ones. Otherwise, $G$ would have vertices of degree at least 4, and splitting these vertices horizontally produces a larger graph $G'$, the cell of which contains $\overline{C(G,w,r)}$. See Figure \ref{f:horizontal}.

\begin{figure}[h!]
\includegraphics[height=1.5in]{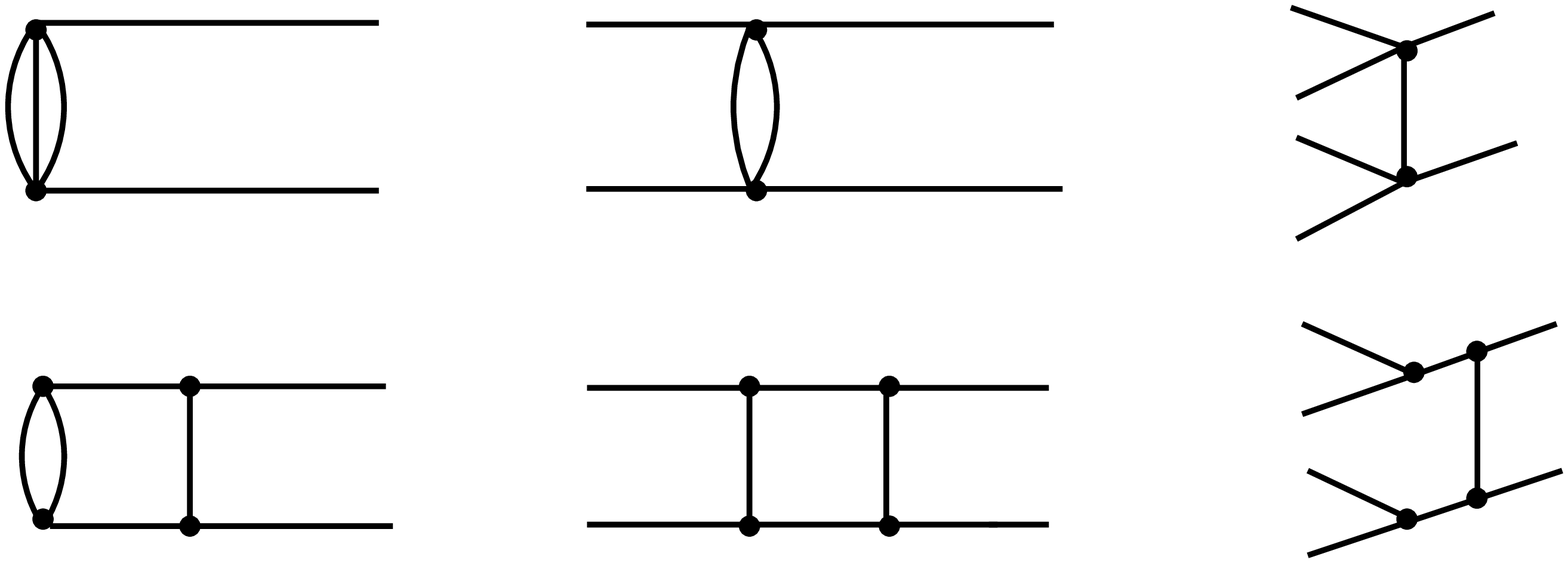}%
  \caption{Horizontal splits in $G$ above vertices in $T$ of degrees 1, 2, and 3, respectively.}
  \label{f:horizontal}
\end{figure}

For the same reason, $T$ cannot have vertices of degree $\geq 4$. We have shown that every maximal cell of $H_g^{(2),\tr}$ is a ladder of genus $g$. If $L(T)$ is a ladder of genus $g$ with tree $T$, then $T$ has $g-1$ nodes by Lemma~\ref{l:laddergenus}, and the dimension of the cell of $L(T)$ is $(g-2)+2n_1+n_2 = (g-2)+(g+1)=2g-1$. So all genus $g$ ladders yield equidimensional cells and none contains another. Therefore the genus $g$ ladders are precisely the maximal cells of $H_g^{(2),\tr}$, and each cell has dimension $2g-1$.
\end{proof}

\begin{corollary}
The maximal cells of $H_g^{(2),\tr}$ are in bijection with the trees on $g-1$ vertices with maximum degree 3.
\end{corollary}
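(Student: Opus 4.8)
The plan is to deduce the statement directly from Theorem~\ref{t:ladder} and Lemma~\ref{l:laddergenus}, the only extra content being that a ladder determines its underlying tree. Throughout we keep $g\ge 3$, so every tree on $g-1$ vertices has at least two vertices and is nontrivial, as required by Definition~\ref{d:ladder}.

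\textbf{Step 1: the map.} Send a tree $T$ on $g-1$ vertices with maximum degree at most $3$ to the constrained type $(L(T),\underline 0,r_T)$, where $L(T)$ is the ladder of Definition~\ref{d:ladder} and $r_T$ is the equivalence relation on $E(L(T))$ whose classes are the fibers of the natural degree-$2$ harmonic morphism $\varphi_T\colon L(T)\to T$ (two horizontal edges above each edge of $T$; the two vertical edges above a leaf of $T$ forming one class; each vertical edge above a degree-$2$ vertex forming a singleton). By Lemma~\ref{l:laddergenus} the genus of $L(T)$ is $(g-1)+1=g$, so $(L(T),\underline 0,r_T)\in\mathcal C_g^{(2)}$ and defines a cell of $H_g^{(2),\tr}$. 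Theorem~\ref{t:ladder} (together with the computation in its proof that the cell of $L(T)$ has dimension $2g-1$ and that these are precisely the maximal cells) shows this map lands in the set of maximal cells and is surjective onto it.

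\textbf{Step 2: injectivity.} It remains to recover $T$, up to isomorphism, from the cell of $(L(T),\underline 0,r_T)$; then non-isomorphic trees give distinct cells and we obtain the claimed bijection. Every vertex of $L(T)$ has valence $3$, so a point of the cell with generic $r_T$-compatible edge lengths is a metric graph $\mg$ whose canonical loopless model is exactly $L(T)$, and by the construction in the proof of Theorem~\ref{t:ladder} (or directly by Theorem~\ref{t:2ec}) $\mg$ is $2$-edge-connected and hyperelliptic. By Corollary~\ref{c:5.14}, $\mg$ has a \emph{unique} involution $i$ with $\mg/i$ a tree. The involution $\sigma$ of $L(T)$ that swaps the two copies of $T$ (identifying the collapsed vertical edges and flipping the paired vertical edges above each leaf) is an involution with quotient the tree $T$, so uniqueness forces $i=\sigma$; hence $\mg/i$ is combinatorially $T$. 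Moreover $r_T$ is exactly the fiber relation of the quotient map $\pi_i$, so the \emph{constrained} type $(L(T),\underline 0,r_T)$ determines $T$ up to isomorphism. Finally, the bound "maximum degree at most $3$" on $T$ is precisely the constraint forced in the last two paragraphs of the proof of Theorem~\ref{t:ladder}, so the two index sets match exactly, and the corollary follows.

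\textbf{Main obstacle.} The substance is entirely in Step 2: checking that the unique hyperelliptic involution of a generic metric ladder is the copy-swap $\sigma$, so that its quotient is combinatorially the original tree and the equal-length relation $r_T$ is reconstructed. Everything else is a restatement of Theorem~\ref{t:ladder} and an application of Lemma~\ref{l:laddergenus}; one need only be mildly careful to reconstruct the relation $r$, and not merely the graph, so that the bijection is at the level of constrained types, equivalently of cells.
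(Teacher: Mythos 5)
Your proposal is correct, and at heart it follows the same route as the paper, whose entire proof of this corollary is the one-line observation that each ladder $L(T)$ of genus $g$ corresponds to the tree $T$ (after Theorem \ref{t:ladder} and Lemma \ref{l:laddergenus}). What you add beyond the paper is a genuine justification of injectivity: recovering $T$ from a generic metric point of the cell via the uniqueness of the hyperelliptic involution (Corollary \ref{c:5.14}), so that the quotient by the copy-swap involution is forced to be $T$. That argument is sound, and it is in the same spirit as the paper's own use of Corollary \ref{c:5.14} in the proposition identifying points of $H_g^{(2),\tr}$ with 2-edge-connected hyperelliptic curves; the paper simply treats the ladder--tree correspondence as self-evident.

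One detail in your Step 1 is off and worth fixing: your description of $r_T$ puts the two vertical edges above a leaf of $T$ into a single class. In the proof of Theorem \ref{t:ladder} the cell of $L(T)$ has dimension $(g-2)+2n_1+n_2=2g-1$, i.e.\ every vertical edge is its own class and only the two horizontal edges over each edge of $T$ are constrained to have equal length; indeed a metric ladder whose two rungs over a leaf have different lengths is still hyperelliptic (the side-swapping involution can fix each rung, and the rungs are collapsed in the quotient), so such curves must lie in the maximal cell. With your finer relation the cell would have dimension $2g-1-n_1$ and would not be maximal, so Step 1 as literally stated would not land in the set of maximal cells. This does not affect your main argument: take $r_T$ to identify only the horizontal pairs (the fibers of $\varphi_T$ over edges of $T$), and both steps, including the reconstruction of $T$ and of $r_T$ from the unique involution, go through as you wrote them.
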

\begin{proof}
Each ladder $L(T)$ of genus $g$ corresponds to the tree $T$.
\end{proof}

\begin{corollary}
Let $g \geq 3$. The number of maximal cells of $H_g^{(2),\tr}$ is equal to the $(g-2)^\text{nd}$ term of the sequence
\[ 1, 1, 2, 2, 4, 6, 11, 18, 37, 66, 135, 265, 552, 1132, 2410, 5098, \dots \]
\end{corollary}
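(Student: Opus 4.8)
The plan is to reduce the statement entirely to a classical problem in tree enumeration and then carry out that enumeration. By the previous corollary, the maximal cells of $H_g^{(2),\tr}$ are in bijection with the isomorphism classes of trees on $g-1$ vertices having maximum degree at most $3$; write $f(n)$ for the number of such trees on $n$ vertices. It therefore suffices to show that $f(n)$ equals the $(n-1)$-st term of the displayed sequence for every $n \geq 2$, i.e.\ that $f(2), f(3), f(4), \dots$ is exactly $1, 1, 2, 2, 4, 6, 11, 18, \dots$. A quick sanity check handles the first cases: the only trees on $2,3,4,5$ vertices with maximum degree $\leq 3$ are, respectively, the edge; the path $P_3$; the path $P_4$ together with the claw $K_{1,3}$; and $P_5$ together with the spider with leg lengths $(1,1,2)$ (note that $K_{1,4}$ is excluded), yielding $1,1,2,2$ as required; and one checks $f(6)=4$, $f(7)=6$ by a short case analysis on degree sequences. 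This sequence is recorded in the OEIS (A000672) as the number of isomorphism classes of trees on $n$ nodes with maximum degree at most $3$, so the corollary is just the combination of the previous corollary with this identification.

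To prove the identification in general I would use standard P\'olya theory. Let $B(x) = \sum_{n\geq 1} b_n x^n$ be the generating function counting rooted trees in which every vertex has at most two children; since a non-root vertex with two children has degree $3$ and the root has degree at most $2$, the underlying unrooted tree automatically has maximum degree $\leq 3$. Decomposing at the root, which carries a multiset of $0$, $1$, or $2$ such rooted subtrees, and applying Burnside to the $S_2$-action on size-two multisets gives the functional equation
\[ B(x) = x\Bigl(1 + B(x) + \tfrac12\bigl(B(x)^2 + B(x^2)\bigr)\Bigr), \]
from which $b_1 = 1$, $b_2 = 1$, $b_3 = 2, \dots$ are read off recursively.

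Passing from rooted to unrooted trees is done via Otter's dissymmetry theorem for trees, $\mathcal{F} + \mathcal{F}^{\bullet\to\bullet} \cong \mathcal{F}^{\bullet} + \mathcal{F}^{\bullet-\bullet}$, applied to the family $\mathcal{F}$ of trees with maximum degree $\leq 3$. Rooting such a tree at a vertex exhibits that vertex as carrying a multiset of at most three subtrees each counted by $B$, so $F^{\bullet}(x)$ is $x$ times the P\'olya-enumeration expression built from $B(x)$, $B(x^2)$, $B(x^3)$; rooting at an unoriented, resp.\ oriented, edge splits the tree into an unordered, resp.\ ordered, pair of rooted trees each of whose root has at most two children, giving $F_{\bullet-\bullet}(x) = \tfrac12\bigl(B(x)^2 + B(x^2)\bigr)$ and $F_{\bullet\to\bullet}(x) = B(x)^2$. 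The dissymmetry theorem then yields the closed formula
\[ F(x) = F^{\bullet}(x) + \tfrac12\bigl(B(x^2) - B(x)^2\bigr), \]
and extracting the coefficient of $x^n$ gives $f(n)$; a direct computation of the first several coefficients reproduces $1, 1, 2, 2, 4, 6, 11, 18, \dots$, confirming that $f(g-1)$ is the $(g-2)$-nd term of the sequence.

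The only delicate point is getting the degree bookkeeping exactly right in the dissymmetry argument: a vertex-rooted tree may have a root of degree $3$ (hence three subtrees), whereas in the edge-rooted and oriented-edge-rooted families each endpoint has at most two subtrees, so the ``root profiles'' contributing to $F^{\bullet}$, $F_{\bullet-\bullet}$, $F_{\bullet\to\bullet}$ are governed by different truncations of the symmetric-group cycle indices, and one must keep the $B(x^2)$, $B(x^3)$ substitution terms straight. The small cases $g = 3, 4$, where $T$ is a single edge or the path $P_3$ and $G$ may instead be a bundle of parallel edges (the $|V(G)|=2$ clause), should be checked by hand against Figure~\ref{f:h23}. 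Everything else is routine; for a reader content to cite A000672, the verification of the first few terms of $(f(n))_{n\ge 2}$ recorded above already suffices.
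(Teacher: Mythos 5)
Your proposal is correct, and its first step is exactly the paper's: by the preceding corollary the maximal cells of $H_g^{(2),\tr}$ correspond to isomorphism classes of trees on $g-1$ vertices with maximum degree at most $3$, so the count is $f(g-1)$ and one only needs to identify $f(n)$ with the displayed sequence. Where you diverge is in how that identification is justified. The paper simply cites the sequence as A000672 in the OEIS, together with Cayley's 1875 enumeration and the Rains--Sloane generating function, whereas you supply a self-contained derivation: the functional equation $B(x)=x\bigl(1+B(x)+\tfrac12(B(x)^2+B(x^2))\bigr)$ for planted trees of outdegree at most $2$, and then Otter's dissymmetry theorem with $F^{\bullet-\bullet}=\tfrac12(B(x)^2+B(x^2))$, $F^{\bullet\to\bullet}=B(x)^2$, giving $F=F^{\bullet}+\tfrac12\bigl(B(x^2)-B(x)^2\bigr)$. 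Your bookkeeping here is right (the edge-rooted endpoints indeed have at most two further subtrees while the vertex root may have three, so the truncated cycle indices you use are the correct ones), and your hand checks of $f(2),\dots,f(7)=1,1,2,2,4,6$ match the sequence and its indexing (the $(g-2)$nd term is $f(g-1)$). So your argument buys independence from the literature at the cost of redoing a classical enumeration; the paper's proof buys brevity by outsourcing exactly this step to Cayley and Rains--Sloane. Either is acceptable; if you keep the P\'olya--Otter route, state explicitly that the generating function so obtained agrees with the one in the cited references (or verify coefficients through the range of $g$ you need), since that identity is the entire content beyond the bijection.
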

\begin{proof}
This is sequence A000672 in \cite{oeis}, which counts the number of trees of maximum degree 3 on a given number of vertices.  Cayley obtained the first twelve terms of this sequence in 1875 \cite{cayleytrees}; see \cite{rainssloane} for a generating function.  See Figure \ref{f:ladder} for the ladders of genus 3,4, and 5, corresponding to the first three terms 1, 1, and 2 of the sequence.
\end{proof}

\begin{remark}
As pointed out in \cite{lpp}, the stacky fan $H_g^{\tr}$ is full-dimensional for each $g$. Indeed, take any 3-valent tree with $g$ leaves, and attach a loop at each leaf. The resulting genus $g$ graph indexes a cell of dimension $3g-3$. Thus it is more natural to consider the sublocus $H_g^{(2),\tr}$, at least from the point of view of dimension.
\end{remark}

One can use Theorem \ref{t:ladder} to compute $H_g^{(2),\tr}$ and $H_g^{\tr}$, the latter by adding all possible bridges to the cells of $H_g^{(2),\tr}$. We will explicitly compute $H_3^{(2),\tr}$, $H_3^{\tr}$, and $H_4^{(2),\tr}$ next. The space $H_4^{\tr}$ is too large to compute by hand.

\subsection{Computations}
	We now apply Theorem \ref{t:ladder} to explicitly compute the spaces $H_3^{(2),\tr}$, $H_3^{\tr}$, and $H_4^{(2),\tr}$.

\begin{theorem}
\label{t:h23}
The moduli space $H_3^{(2),\tr}$ of 2-edge-connected tropical hyperelliptic curves has 11 cells and $f$-vector
\[ (1,2,2,3,2,1). \]
Its poset of cells is shown in Figure \ref{f:h23}.
\end{theorem}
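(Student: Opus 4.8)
\emph{Proof strategy.} The plan is to build the poset of cells from the top down: first pin down the unique maximal cell via Theorem~\ref{t:ladder}, then enumerate all of its iterated contractions. By Theorem~\ref{t:ladder} the $(2g-1)$-dimensional cells of $H_3^{(2),\tr}$ are the ladders $L(T)$ of genus $3$, and by Lemma~\ref{l:laddergenus} the tree $T$ has $g-1=2$ vertices; since there is only one tree on two vertices (a single edge), there is exactly one maximal cell $\overline{C(L(T),\underline{0},r)}$, and it has dimension $2\cdot 3-1=5$. Concretely $L(T)$ has four vertices, two ``horizontal'' edges that are $r$-equivalent (hence constrained to have equal length) and four ``vertical'' edges in singleton $r$-classes, so this cell is indeed $5$-dimensional; this is the last entry of the $f$-vector.

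Since $\mathcal{C}_3^{(2)}$ is closed under contraction and, by Theorem~\ref{t:mc}, the cells of $H_3^{(2),\tr}$ are exactly the types in $\mathcal{C}_3^{(2)}$, the remaining task is to list every constrained type obtained from $(L(T),\underline{0},r)$ by iterated contraction, carrying along the graph, the vertex weights, and the relation $r$ (this last point is what fixes the dimensions, since $r$ can force lengths to coincide while loops always contribute a free parameter). The three cases in the proof that $\mathcal{C}_g^{(2)}$ is closed under contraction -- collapsing a vertical class, a horizontal class, or a loop class, together with the leaves that must be grafted onto the target tree -- make each contraction mechanical, and the $2$-edge-connectedness and hyperellipticity of every type produced is automatic from that closure argument. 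Collapsing a single $r$-class of $(L(T),\underline{0},r)$ gives either the banana graph of four parallel edges on two vertices (collapse the horizontal class) or, up to the symmetry of $L(T)$, a graph carrying one loop (collapse a vertical class); these are the two $4$-dimensional cells. Iterating produces three $3$-dimensional cells (the rose with three loops and weight $0$; a theta graph one of whose edges is subdivided by a weight-$1$ vertex; and two loops joined by a double edge), two $2$-dimensional cells (a two-loop rose with a weight-$1$ vertex, and a loop joined by a double edge to a weight-$1$ vertex), two $1$-dimensional cells (a one-loop graph with a weight-$2$ vertex, and a double edge between two weight-$1$ vertices), and finally the single $0$-dimensional cell, the point of weight $3$. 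This yields $1+2+2+3+2+1=11$ cells and the $f$-vector $(1,2,2,3,2,1)$; recording which types arise as contractions of which gives the poset of Figure~\ref{f:h23}.

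The main obstacle is not any individual computation but the bookkeeping that keeps the enumeration honest: several different contraction sequences land on the same constrained type -- for instance the three-loop rose is a face of both the banana graph and the $4$-dimensional loop graph, and the weight-$1$ two-loop rose is reached in three different ways -- so one must compare the full data $(G,w,r)$, including the relation, rather than just the underlying graphs, in order to neither over- nor under-count. A minor additional check, useful as a sanity test even though it follows from closure under contraction, is to exhibit the degree-$2$ harmonic morphism to a tree directly for the loop-bearing types, obtained by folding each loop onto a leaf edge as in the closure proof. Once the $11$ types and their incidences are written down, the theorem is immediate.
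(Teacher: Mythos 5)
Your enumeration is correct, but it takes a genuinely different route from the paper. The paper's proof is essentially a citation: it takes the explicit computation of $M_3^{\tr}$ (the full poset of genus~3 combinatorial types) from the earlier paper \cite[Theorem 2.13, Figure 1]{ch} and then applies Theorem~\ref{t:ladder} to cut out and refine the hyperelliptic sublocus, reading off the $11$ cells and their incidences from that known poset. You instead work top-down inside $H_3^{(2),\tr}$ itself: identify the unique maximal cell (the genus~3 ladder, with the two horizontal edges $r$-equivalent and the four vertical edges free, hence $5$-dimensional) and enumerate all iterated contractions of the constrained type $(L(T),\underline{0},r)$. This is self-contained -- it needs no external computation of $M_3^{\tr}$ -- but it silently uses one more piece of Theorem~\ref{t:ladder} than you state: purity. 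The fact that every cell of $H_3^{(2),\tr}$ arises as a contraction of the single ladder type is exactly what guarantees your top-down list is exhaustive (a priori $\mathcal{C}_3^{(2)}$ is defined intrinsically, not as the set of faces of maximal cells); since purity is part of the statement of Theorem~\ref{t:ladder}, the ingredient is available, but you should invoke it explicitly. Your list of types checks out (ladder; $4$-edge banana and the one-loop graph in dimension~$4$; three-loop rose, theta subdivided by a weight-$1$ vertex, and the doubled-bar dumbbell in dimension~$3$; the two weighted types in each of dimensions~$2$ and~$1$; the weight-$3$ point), as does your warning that types must be compared as triples $(G,w,r)$ to avoid over- or under-counting. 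In short: the paper buys brevity by outsourcing the genus-$3$ census, while your argument buys independence from \cite{ch} at the cost of the contraction bookkeeping and an explicit appeal to purity.
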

\begin{proof}
The space $M_3^{\tr}$ was computed explicitly in \cite[Theorem 2.13]{ch}. A poset of genus 3 combinatorial types is shown in \cite[Figure 1]{ch}. Now use Theorem \ref{t:ladder}.
\end{proof}

\begin{theorem}
\label{t:h3}
The moduli space $H_3^{\tr}$ of tropical typerelliptic curves has 36 cells and $f$-vector
\[ (1,3,6,11,9,5,1). \]
Its poset of cells is shown in Figure \ref{f:h3}.
\end{theorem}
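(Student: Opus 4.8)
The plan is to reduce the computation of $H_3^{\tr}$ to the already-established description of $H_3^{(2),\tr}$ from Theorem~\ref{t:h23}, together with the bridge-adding procedure that defines hyperelliptic types from 2-edge-connected ones. Recall that by construction $\mathcal{C}_3$ consists of types obtained from 2-edge-connected hyperelliptic types of genus $3$ by attaching bridges (subject to the stability condition, since valence-1 and unstable valence-2 vertices are disallowed), with the relation $r$ trivial on those bridges. So first I would enumerate the cells of $H_3^{(2),\tr}$ using Figure~\ref{f:h23} — there are $11$ of them with $f$-vector $(1,2,2,3,2,1)$ — and then, for each such 2-edge-connected type, list all ways to attach bridges that produce a stable genus-3 type, up to isomorphism.

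The key bookkeeping step is to organize the enumeration by the underlying combinatorial structure: a genus-3 hyperelliptic type is a 2-edge-connected hyperelliptic ``core'' of some genus $g' \le 3$, with trees of bridges attached; but since attaching a bridge does not change the genus (it adds one vertex and one edge), the core still has genus $3$. Thus every hyperelliptic genus-3 type is a genus-3 2-edge-connected hyperelliptic type with some extra bridges, EXCEPT that attaching a bridge to a vertex can force us to subdivide or can be constrained by stability. I would go through the $11$ cells of $H_3^{(2),\tr}$ one at a time; for each, a bridge may be attached at a vertex only if doing so keeps all weight-zero vertices at valence $\ge 3$, and one must also allow chains of bridges and bridges meeting at a new vertex. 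The new cells each have dimension one higher per bridge added (each bridge contributes a free length parameter, and $r$ is trivial on bridges), so I can simultaneously track the $f$-vector. Cross-checking against \cite[Figure 1]{ch}, which shows the poset of all genus-3 tropical types, will catch errors: every cell of $H_3^{\tr}$ must appear there, and I only keep those whose core is hyperelliptic.

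Carrying this out, I expect the count to come to $36$ cells with $f$-vector $(1,3,6,11,9,5,1)$: dimension $0$ has the single cell (all lengths zero / the point), dimension $6 = 3g-3$ has one maximal cell, etc. The consistency check is that $\sum (-1)^i f_i$ and the containment poset match Figure~\ref{f:h3}, and that each top-dimensional cell is either the genus-3 ladder $L(T)$ (the unique tree $T$ on two vertices) possibly with bridges, or a theta-graph-type core with a bridge tree — in genus $3$ the $3g-3=6$ dimensional cells are exactly the maximal-dimensional 2-edge-connected ladder cell together with bridge-decorated lower-genus-core configurations whose total dimension reaches $6$.

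The main obstacle is the purely combinatorial enumeration: making sure I have found \emph{all} stable bridge attachments and all isomorphism classes without double-counting, especially the cases where several bridges share an endpoint or form a path, and the cases where attaching a bridge at a vertex of a 2-edge-connected core of valence exactly $3$ is allowed but at one of valence $2$ (after adding a loop midpoint, etc.) requires care. I would mitigate this by drawing the poset explicitly, layer by layer by dimension, using Theorem~\ref{t:ladder}'s description of the top cells as a scaffold and the contraction maps to link layers, and then simply reading off the $f$-vector $(1,3,6,11,9,5,1)$ and the total $36$ from the completed Figure~\ref{f:h3}.
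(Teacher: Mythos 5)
Your approach is essentially the paper's: its proof of Theorem~\ref{t:h3} is precisely to combine the computation of $H_3^{(2),\tr}$ (Theorem~\ref{t:h23}) with the bridge lemma (Lemma~\ref{l:bridge}, via Corollary~\ref{c:bridge}) and then enumerate the stable bridge-decorated genus-3 types, the result being recorded in Figure~\ref{f:h3}. The one point to keep in mind, which you partly acknowledge, is that ``adding bridges'' must be read as the inverse of bridge contraction --- splitting vertices and redistributing weights and incident edges along trees of bridges, subject to stability --- not merely hanging pendant edges on a fixed genus-3 core.
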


\begin{proof}
  Apply Theorem \ref{t:h23} and Lemma \ref{l:bridge}.
\end{proof}

\begin{theorem}
\label{t:h24}
The moduli space $H_4^{(2),\tr}$ of 2-edge-connected tropical hyperelliptic curves of genus 4 has 31 cells and $f$-vector
\[ (1,2,5,6,7,6,3,1) \].
\end{theorem}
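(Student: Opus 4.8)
The plan is to proceed exactly as in the proofs of Theorems \ref{t:h23} and \ref{t:h3}: use Theorem \ref{t:ladder} to pin down the top-dimensional cell, and then enumerate all of its faces by contraction. By Theorem \ref{t:ladder} the maximal cells of $H_4^{(2),\tr}$ are in bijection with trees on $g-1=3$ vertices of maximum degree $\le 3$, of which there is exactly one, the path $P_3$. Hence $H_4^{(2),\tr}$ has a unique maximal cell, the genus-$4$ ladder $L=L(P_3)$ of Figure \ref{f:ladder}, of dimension $2g-1=7$; its seven $r$-classes are the two ``horizontal'' pairs (the two preimages under $\varphi\colon L\to P_3$ of each of the two edges of $P_3$) together with the five ``vertical'' edges, each forming its own singleton class (the vertical edges over a leaf are parallel but are not $r$-constrained, which is why the dimension is $7$ and not $5$).

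Next I would argue that every cell of $H_4^{(2),\tr}$ is a face of this top cell. By the argument in the proof of Theorem \ref{t:ladder}, any $2$-edge-connected hyperelliptic type of genus $4$ can be enlarged by vertex splits until it becomes a ladder, which here must be $L$; dually it is an iterated contraction of $(L,\underline 0,r)$ along unions of $r$-classes, and all such contractions lie in $\mathcal{C}_4^{(2)}$ since that collection is closed under contraction (Proposition). So the cells of $H_4^{(2),\tr}$ are exactly the faces of $\overline{C(L,\underline 0,r)}$, namely the constrained types $(L_\Sigma,\underline 0,r_\Sigma)$ obtained by contracting the $r$-classes outside a chosen subset $\Sigma$ of the seven classes, taken up to isomorphism.

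The enumeration itself is then a finite check. One computes $\Aut(L,\underline 0,r)$ — generated by the deck involution, a lift of the nontrivial symmetry of $P_3$, and the transpositions of the two parallel vertical edges over each leaf of $P_3$ — and lets it act on the $2^7$ subsets $\Sigma$. For each orbit representative one performs the contraction, carefully tracking the loops and new parallel edge-classes that are created (the two local pictures being exactly those of Figures \ref{f:case2} and \ref{f:case3}), records the resulting constrained type together with its dimension $|\Sigma|$, and finally merges any types that turn out isomorphic even though their defining subsets are not $\Aut$-conjugate. Sorting the surviving types by dimension yields the $f$-vector $(1,2,5,6,7,6,3,1)$ and the total of $31$ cells. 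As an independent cross-check, since $M_4^{\tr}$ is computed explicitly in \cite{ch}, one verifies that the underlying combinatorial type of each cell obtained this way is a $2$-edge-connected genus-$4$ type from that list carrying a nondegenerate degree-$2$ harmonic morphism to a tree (or having two vertices), and that $r$ is then forced.

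The only genuine difficulty is the bookkeeping in the last step: getting $\Aut(L,\underline 0,r)$ right and, above all, correctly detecting when two non-conjugate face selections produce isomorphic constrained types once contraction has merged edges into loops or into new parallel classes. This is error-prone by hand; organizing the casework by the number of surviving $r$-classes and by which edges of $P_3$ remain downstairs keeps it manageable, and the count is readily confirmed by computer.
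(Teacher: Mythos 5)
Your argument is correct, and it reaches the count by a somewhat different route than the paper. The paper's proof is a one-liner: it takes the explicit list of genus-4 combinatorial types of $M_4^{\tr}$ computed in \cite[Theorem 2.13]{ch} and applies Theorem \ref{t:ladder} to that list, i.e.\ it filters the known cells of $M_4^{\tr}$ for the 2-edge-connected hyperelliptic ones and imposes the equal-length relation $r$ cell by cell. You instead work intrinsically inside $H_4^{(2),\tr}$: Theorem \ref{t:ladder} gives purity in dimension $2g-1=7$ and a unique maximal cell $L(P_3)$ (the path being the only tree on $g-1=3$ vertices of maximum degree $3$), and closedness of $\mathcal{C}_4^{(2)}$ under contraction shows every cell is a face of that top cell, so the $f$-vector is obtained by enumerating contractions of the seven $r$-classes up to $\Aut(L,\underline{0},r)$ and then up to isomorphism of the resulting constrained types; you correctly identify the one real pitfall, namely that non-conjugate subsets of classes can produce isomorphic types once contraction creates loops or new parallel classes, which is exactly where hand computation goes wrong. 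Your route is self-contained (the computation of $M_4^{\tr}$ enters only as a cross-check), at the price of doing the orbit/isomorphism bookkeeping yourself, whereas the paper outsources precisely that bookkeeping to the already-computed poset of $M_4^{\tr}$. Neither your write-up nor the paper's displays the $31$ types explicitly, so at the level of detail given, both proofs rest on the same finite combinatorial verification.
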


\begin{proof}
The space $M_4^{\tr}$ was computed explicitly in \cite[Theorem 2.13]{ch}, so we can apply Theorem \ref{t:ladder} directly to it.
\end{proof}

\section{Berkovich skeletons and tropical plane curves}
\label{s:skeletons}

In this section, we are interested in hyperelliptic curves in the plane over a complete, nonarchimedean valuated field $K$.  Every such curve $X$ is given by a polynomial of the form $$P=y^2+f(x)y+h(x)$$
for $f,h\in K[x]$.  We suppose that the Newton polygon of $P$ is the lattice triangle in $\R^2$ with vertices $(0,0), (2g+2,0),$ and $(0,2)$.  We write $\Delta_{2g+2,2}$ for this triangle. Since  $\Delta_{2g+2,2}$ has $g$ interior lattice points, the curve $X$ has genus $g$.
 The main theorem in this section, Theorem \ref{t:skeleton}, says that under certain combinatorial conditions, the Berkovich skeleton of $X$ is a ladder over the path  $P_{g-1}$  on $g-1$ vertices. The main tool is \cite[Corollary 6.27]{bpr}, which states that under nice conditions, an embedded tropical plane curve, equipped with a lattice length metric, faithfully represents the Berkovich skeleton of $X$.

 Theorem \ref{t:skeleton} is a first step in studying the behavior of hyperelliptic curves under the map
\[ \trop: M_g(K) \to \Mtrg,  \]
which sends a genus $g$ curve $X$ over $K$ to its Berkovich skeleton, which canonically has the structure of a metric graph with nonnegative integer weights \cite[Remark 5.51]{bpr}.  We conjecture that the locus of hyperelliptic algebraic curves of genus $g$ maps surjectively onto the tropical hyperelliptic locus.  It would be very interesting to study this tropicalization map further.

Recall that an embedded tropical curve $T \subseteq \R^2$ can be regarded as a metric space with respect to {\bf lattice length}. Indeed, if $e$ is a 1-dimensional segment in $T$, then it has a rational slope. Then a ray from the origin with the same slope meets some first lattice point $(p,q)\in \Z^2$. Then the length of $e$ in the metric space $T$ is its Euclidean length divided by $\sqrt{p^2+q^2}$.
By a {\bf standard ladder} of genus $g$, we mean the graph $L(T)$ defined above for $T$ a path on $g-1$ vertices. We will denote this graph $L_g$. Figure \ref{f:ladder} shows the standard ladders of genus 3, 4, and 5, as well as a nonstandard ladder of genus~5.

In Theorem \ref{t:skeleton}, we will consider unimodular triangulations of $\Delta_{2g+2,2}$ with bridgeless dual graph.  Such triangulations contain a maximally triangulated trapezoid of height 1 in their bottom half (see Figure \ref{f:newton}), so we study height 1 trapezoids in the next lemma.
\begin{lemma}
\label{l:trapezoid}
Fix $a,b,c,d \in \Z$ with $a<b$ and $c<d$, and let $Q$ be the lattice polytope with vertices $(a,0), (b,0), (c,1)$, and $(d,1)$. Then any unimodular triangulation of $Q$ has $N=b-a+d-c+1$ nonhorizontal edges $w_1,\dots,w_N$ satisfying $w_1=\{(a,0), (c,1)\}$, $w_N = \{(b,0), (d,1)\}$, and for each $i=1,\dots,N-1$, if $w_i$ has the form $\{(x,0),(y,1)\}$ for some $x,y \in \Z$, then
\[ w_{i+1} = \{(x+1,0), (y,1)\} \text{ or } w_{i+1} = \{(x,0), (y+1,1)\}. \]
Thus there are $\binom{b-a+d-c}{b-a}$ such triangulations.
\end{lemma}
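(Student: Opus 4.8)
The plan is to analyze a unimodular triangulation of the height-1 trapezoid $Q$ by reading it off "from left to right." First I would observe that, since $Q$ has height $1$ and all lattice points of $Q$ lie on the two horizontal lines $y=0$ and $y=1$, every triangle in a unimodular triangulation of $Q$ has exactly one horizontal edge (either on the bottom or on the top) and two nonhorizontal edges; there is no interior lattice point to worry about, and no triangle can have all three vertices on one line. Each nonhorizontal edge is therefore of the form $\{(x,0),(y,1)\}$ for integers $x,y$, and is shared by at most two triangles of the triangulation. The leftmost nonhorizontal edge must be $w_1 = \{(a,0),(c,1)\}$, since $(a,0)$ and $(c,1)$ are the leftmost vertices on their respective lines and the left boundary of $Q$ is the segment joining them; similarly the rightmost is $w_N = \{(b,0),(d,1)\}$.

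Next I would set up the "sweep." Order the nonhorizontal edges $w_1, \dots, w_N$ by, say, the $x$-coordinate of their midpoint (one checks these midpoints are distinct and strictly increasing, using unimodularity: two nonhorizontal edges of a unimodular triangulation of $Q$ cannot cross, and cannot share both endpoints). Given $w_i = \{(x,0),(y,1)\}$ with $i < N$, the triangle of the triangulation immediately to its right has $w_i$ as an edge and its third vertex is a lattice point to the right; by unimodularity (area $1/2$) that third vertex is forced to be either $(x+1,0)$ or $(y+1,1)$. In the first case the next nonhorizontal edge is $w_{i+1} = \{(x+1,0),(y,1)\}$; in the second it is $w_{i+1} = \{(x,0),(y+1,1)\}$. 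This is exactly the claimed recursion. Iterating, the sequence of edges corresponds to a lattice path from $(a,c)$ to $(b,d)$ taking unit steps in the two coordinate directions — a monotone staircase — and the horizontal edges of the triangulation are completely determined by this staircase (each "step" of the staircase fills in one triangle with its horizontal edge). Conversely, every such staircase gives a valid unimodular triangulation. Hence the number of nonhorizontal edges is the number of steps plus one, namely $(b-a) + (d-c) + 1 = N$, and the triangulations are in bijection with monotone lattice paths from $(a,c)$ to $(b,d)$, of which there are $\binom{(b-a)+(d-c)}{b-a}$.

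The main obstacle, and the one step deserving care, is justifying the local forcing: that once $w_i$ is fixed, the triangle immediately to its right — and in particular which of the two candidate edges is $w_{i+1}$ — is determined, with no other lattice points available as the apex. This is where unimodularity is essential: a triangle with base $w_i$ and a third vertex $(x',0)$ with $x' > x+1$ or $(y',1)$ with $y' > y+1$ would have area $> 1/2$, hence would have to be further subdivided, contradicting that it is a cell of the triangulation; and a third vertex that is not adjacent in the staircase sense would force an edge crossing $w_i$. I would also need the mild bookkeeping fact that the sweep really does exhaust all nonhorizontal edges and all triangles, i.e. that the process terminates exactly at $w_N$ — this follows because each triangle is charged to exactly one staircase step and the staircase is finite. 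Everything else (distinctness and monotonicity of the edge ordering, the converse direction, the count of lattice paths) is routine combinatorics once the local picture is in place.
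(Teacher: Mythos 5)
Your proposal is correct and follows essentially the same route as the paper: the paper observes that the unimodular triangle containing the left edge $\{(a,0),(c,1)\}$ must have apex $(a+1,0)$ or $(c+1,1)$ and then inducts by deleting that triangle, which is exactly your left-to-right sweep with the same unimodularity forcing and the same bijection with monotone lattice paths. Your write-up just makes explicit a few routine points (each triangle has one horizontal edge, the converse that every staircase is a triangulation) that the paper leaves implicit.
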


\begin{proof}
The edge $\{(a,0), (c,1)\}$ is contained in some unimodular triangle $\Delta$, so either $\{(a+1,0), (c,1)\}$ or $\{(a,0), (c+1,1)\}$ must be present. Now induct, replacing $Q$ by $Q-\Delta$.
\end{proof}

\begin{definition}
Let $T \subseteq \R^n$ be an embedded tropical curve with $\dim H_1(X,\R) > 0$. We define the {\bf core} of $T$ to be the smallest subspace $Y \subseteq T$ such that there exists a deformation retract from $T$ to $Y$. See Figure \ref{f:newton}.
\end{definition}

\begin{figure}[h!]
\includegraphics[height=3in]{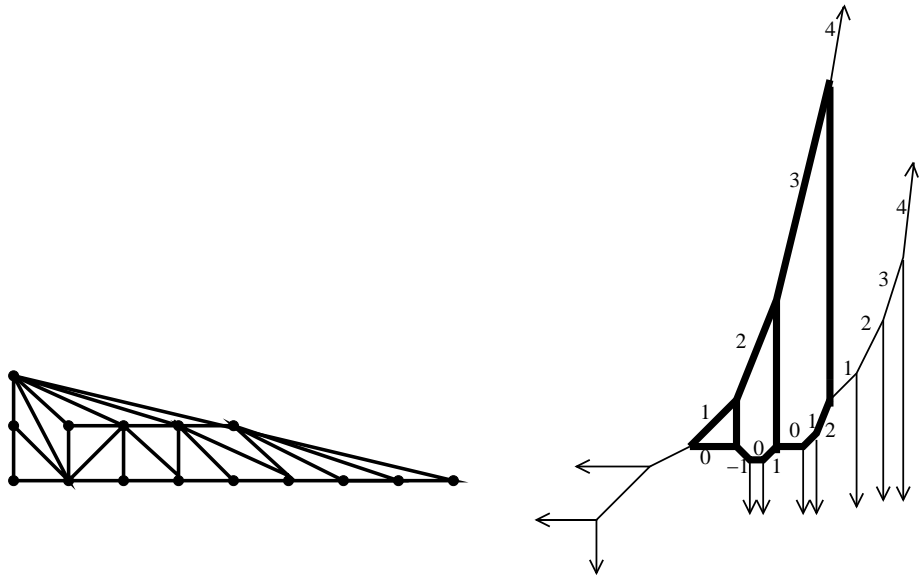}%
  \caption{On the left, a unimodular triangulation $N$ of $\Delta_{2g+2,2}$ that happens to include the edge $\{(0,2),(1,0)\}$ but not the edge $\{(0,2),(2g+1,0)\}$.  On the right, the tropical plane curve dual to $N$  has a core (shown in thick edges) that is a standard genus 3 ladder.  The numbers on the edges indicate their slopes.  We have chosen $g=3$ in this illustration.}
  \label{f:newton}
\end{figure}

\begin{theorem}
\label{t:skeleton}
Let $X \subseteq \mathbb{T}^2$ be a plane hyperelliptic curve of genus $g \geq 3$ over a complete nonarchimedean valuated field $K$, defined by a polynomial of the form $P=y^2+f(x)y+h(x)$.  Let $\widehat{X}$ be its smooth completion.  Suppose the  Newton complex of $P$ is a unimodular triangulation of the lattice triangle $\Delta_{2g+2,2}$, and suppose that the core of $\Trop X$ is bridgeless. Then the skeleton $\Sigma$ of the Berkovich analytification $\widehat{X}^{\operatorname{an}}$ is a standard ladder of genus $g$ whose opposite sides have equal length.
\end{theorem}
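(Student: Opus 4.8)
The plan is to realize $\Sigma$ as the core of $\Trop X$ with its lattice-length metric, using \cite[Corollary 6.27]{bpr}, and then to read off the combinatorics and metric of that core directly from the unimodular triangulation $N$ of $\Delta_{2g+2,2}$. First I would set up the tropicalization: the Newton complex being a unimodular triangulation means $\Trop X$ is a smooth tropical plane curve dual to $N$, so by \cite[Corollary 6.27]{bpr} the core $Y$ of $\Trop X$, equipped with lattice length, is isometric to the skeleton $\Sigma$ of $\widehat X^{\operatorname{an}}$. So it suffices to prove that $Y$ is a standard ladder $L_g$ with opposite sides of equal length. Since the core is by hypothesis bridgeless, every bounded edge of $\Trop X$ that survives into $Y$ is dual to an interior edge of $N$, and every interior lattice point of $\Delta_{2g+2,2}$ — there are exactly $g$ of them, namely $(1,1),\dots,(g,1)$ — contributes a bounded region, hence $Y$ has first Betti number $g$.

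Next I would analyze the shape of $N$. The triangle $\Delta_{2g+2,2}$ has its interior lattice points all on the line $y=1$; the hypothesis that the core is bridgeless forces the lower trapezoid $Q$ with vertices $(0,0),(2g+2,0),(0,1)$ or $(1,1)$ (depending on which of the edges $\{(0,2),(1,0)\}$, $\{(0,2),(2g+1,0)\}$ lies in $N$) and $(g,1)$ or $(g+1,1)$ to be triangulated so that no interior vertex of $N$ on the line $y=1$ becomes a cut vertex of the core — concretely, each of $(1,1),\dots,(g,1)$ must be joined to the bottom row by the ``staircase'' of Lemma \ref{l:trapezoid}, and the two extreme interior points $(1,1)$ and $(g,1)$ must each send \emph{two} nonhorizontal edges down to row $0$ (this is exactly what prevents a bridge at the ends). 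Dualizing via Lemma \ref{l:trapezoid}: the nonhorizontal edges $w_1,\dots,w_N$ of the triangulation of $Q$ are dual to the vertical edges of $\Trop X$, the horizontal edges on the line $y=1$ are dual to the horizontal edges connecting consecutive bounded cells, and the two copies of the path come from the two horizontal lines $y=0$ and (the part of) $y=1$ in $N$ bordering the interior row. This produces precisely $L(P_{g-1})=L_g$: two copies of the path on $g-1$ vertices, with a double rung at each end and single rungs in between.

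The remaining point is the metric claim that opposite sides have equal length. Here I would argue as follows: the curve $X$ is hyperelliptic, so by \cite[Corollary 3.5]{spec} its tropicalization, hence $\Sigma$, is a hyperelliptic metric graph; by Theorem \ref{t:main} (or Corollary \ref{c:5.14}) it carries a unique hyperelliptic involution $i$ with $\Sigma/i$ a tree, and on a ladder $L(T)$ this $i$ is forced to be the ``flip'' exchanging the two copies of $T$. Since $i$ is an isometry exchanging the two side-paths of the ladder, corresponding (opposite) edges on the two sides have equal length. The horizontal symmetry that makes $i$ an isometry can alternatively be extracted directly from the form $P=y^2+f(x)y+h(x)$: completing the square (after adjusting by the valuation) exhibits the $y\mapsto -y$ symmetry, which descends to the skeleton as exactly the flip $i$, and an isometry must match up edge lengths on the two sides. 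I expect the \textbf{main obstacle} to be the careful bookkeeping in the middle step: translating ``core is bridgeless'' into the precise combinatorial constraint on $N$ (that the extreme interior lattice points each have two downward staircase edges and that no horizontal edge on $y=1$ is a bridge of the core), and then verifying that the dual graph of that configuration, with lattice lengths, is exactly $L_g$ rather than some subdivision or degeneration of it. The equal-length claim, once the uniqueness of the hyperelliptic involution from Section 3 is in hand, should be comparatively short.
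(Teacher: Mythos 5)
Your overall architecture matches the paper's: invoke \cite[Corollary 6.27]{bpr} to identify $\Sigma$ isometrically with the core of $\Trop X$ in its lattice-length metric, then read the combinatorics off the unimodular triangulation $N$. But the middle step --- the one you yourself flag as the main obstacle --- is where your translation of ``the core is bridgeless'' goes wrong. The correct content of that hypothesis is not about cut vertices on the line $y=1$, nor about the extreme interior points $(1,1)$ and $(g,1)$ each sending two edges down to the bottom row (this is neither implied nor needed: e.g.\ with the edge $\{(0,2),(1,0)\}$ present, a single downward edge from $(1,1)$ still leaves the leftmost cycle intact). The real point is that $N$ can contain no edge from $(0,2)$ to the $x$-axis that splits the interior lattice points $(1,1),\dots,(g,1)$ nontrivially, since the dual of such an edge would be a bridge of the core; once those diagonals are excluded, unimodularity of the triangles meeting the interior points forces the horizontal edges $\{(i,1),(i+1,1)\}$ and the fan of edges $\{(0,2),(i,1)\}$, and what remains below $y=1$ is exactly a height-one trapezoid handled by Lemma \ref{l:trapezoid}. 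Without that exclusion, the ``lower trapezoid $Q$'' you triangulate need not exist. Your duality bookkeeping is also inverted: the horizontal segments on $y=1$ are dual to the \emph{vertical} rungs of the ladder (this is what the paper uses later), while the staircase edges of the trapezoid and the fan edges from $(0,2)$ are dual to the two side paths; as written, your identification of the two copies of the path with ``the lines $y=0$ and $y=1$'' does not give the ladder structure.

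For the equal-length claim you take a genuinely different route. The paper's argument is a two-line lattice-length computation: the rungs are vertical, so $V_{i,1}=W_{i,1}$, and along either side path every segment has direction of the form $(1,n)$, so its lattice length equals its horizontal displacement; both opposite sides therefore have length $W_{i+1,1}-W_{i,1}$. You instead propose: $\Sigma$ is hyperelliptic (via \cite[Corollary 3.5]{spec} or the $y\mapsto -y$ symmetry), hence by Theorem \ref{t:main} it carries an involution with tree quotient, and on a ladder this involution must be the flip, which is an isometry exchanging opposite sides. This can be made to work, but it rests on an unproved (though true) rigidity statement --- that the only involution of $L_g$ with tree quotient exchanges $V_i$ and $W_i$ for every $i$ --- and it imports machinery the paper does not need (specialization of hyperellipticity over a general complete valued field, or functoriality of the skeleton under the hyperelliptic involution if you go the completing-the-square route). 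What your route buys is a conceptual explanation of the symmetry of the skeleton directly from hyperellipticity; what the paper's buys is a short, self-contained metric computation from the embedding. As submitted, though, the middle combinatorial step needs to be redone along the lines above before either equal-length argument can be run.
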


\begin{proof}
Let $\Sigma'$ denote the core of $T = \Trop X$, and let $N$ denote the Newton complex of $f$, to which $T$ is dual. Note that the lattice triangle $\Delta_{2g+2,2}$ has precisely $g$ interior lattice points $(1,1),\dots,(g,1)$. Now, we claim that $N$ cannot have any edge from $(0,2)$ to any point on the $x$-axis such that this edge partitions the interior lattice points nontrivially. (For example, the edge $\{(0,2),(3,0)\}$ cannot be in $N$). Indeed, if such an edge were present in $N$, then the edge in $T$ dual to it would be a bridge in $\Sigma'$, contradiction. Then since all the triangles incident to the interior lattice points are unimodular, it follows that each edge
\[ s_1 = \{(1,1),(2,1)\}, \dots, s_{g-1} = \{(g-1,1),(g,1)\} \]
is in $N$, and for the same reason, that each edge
\[ \{(0,2),(1,1)\}, \dots, \{(0,2),(g,1)\} \]
is in $N$. For $i=1,\dots,g-1$, the 2-face of $N$ above (respectively, below) $s_i$ corresponds to a vertex of $T$; call this vertex $V_i$ (respectively $W_i$).

Now, either the edge $\{(0,2),(1,0)\}$ is in $N$, or it is not. If it is, then unimodularity implies that the edges $\{(0,1),(1,0)\}$ and $\{(1,1),(1,0)\}$ are also in $N$. If not, then unimodularity again implies that $\{(0,1),(1,1)\}$ is in $N$. Similarly, if $\{(0,2),(2g+1,0)\}$ is in $N$, then the edges $\{(g,1),(2g+1,0)\}$ and $\{(g-1,1),(2g+1,0)\}$ are in $N$. In each of these cases, $N$ contains a unimodular triangulation of the trapezoidal region with vertices
\[ (a,0), (b,0), (1,1), (g,1) \]
where we take $a=0$ or $a=1$ according to whether the edge $\{(0,2),(1,0)\}$ is in $N$, and similarly we take $b=2g+1$ or $b=2g+2$ according to whether the edge $\{(0,2),(2g+1,0)\}$ is in $N$.  For example, in Figure \ref{f:newton}, we take $a=1$ and $b=2g+2$.

In each case, we see that the dual graph to $N$ is a standard ladder of genus $g$, with vertices $V_1, \dots, V_{g-1}, W_1, \dots, W_{g-1}$. In particular, there are two paths in $T$ from $V_1$ to $W_1$ not passing through any other $V_i$ or $W_i$, and similarly, there are two paths from $V_{g-1}$ to $W_{g-1}$.

Moreover, we claim that for each $i=1, \dots, g-2$, the lattice-lengths of the paths in $\Sigma'$ between $V_i$ and $V_{i+1}$ and between $W_i$ and $W_{i+1}$ are equal. Indeed, for each $i$, write $V_i = (V_{i,1},V_{i,2})$ and $W_i = (W_{i,1},W_{i,2})$. Notice that $V_{i,1} = W_{i,1}$ for each $i$, because each segment $s_i \in N$ is horizontal so its dual in $T$ is vertical. Furthermore, the path from $W_i$ to $W_{i+1}$ in $\Sigma'$ consists of a union of segments of integer slopes, so their lattice-length distance is the horizontal displacement $W_{i+1,1}-W_{i,1}$. Similarly, the distance in $\Sigma'$ from $V_i$ to $V_{i+1}$ is $V_{i+1,1}-V_{i,1} = W_{i+1,1}-W_{i,1}$, as desired. So $\Sigma'$ is a standard ladder of genus $g$, with opposite sides of equal length.
Finally, we apply \cite[Corollary 6.27]{bpr} to conclude that the tropicalization map on the Berkovich analytification $\widehat{X}^{\operatorname{an}}$ of $\widehat{X}$ induces an isometry of the Berkovich skeleton $\Sigma$ of $\widehat{X}^{\text{an}}$ onto $\Sigma'$.  This completes the proof of Theorem~\ref{t:skeleton}.
\end{proof}

In the proof above, we gained a refined combinatorial understanding of the unimodular triangulations of $\Delta_{2g+2,2}$, which we make explicit in the following corollary.

\begin{corollary}
Consider unimodular triangulations of $\Delta_{2g+2,2}$ whose dual complex contains a bridgeless connected subgraph of genus $g$. There are $\binom{3g+3}{g+1}$ such triangulations that use neither the edge $e_1 = \{(0,2),(1,0)\}$ nor the edge $e_2 = \{(0,2),(2g+1,0)\}$, there are $2 \cdot \binom{3g+1}{g}$ triangulations using one of $e_1$ and $e_2$, and there are $\binom{3g-1}{g-1}$ triangulations using both $e_1$ and $e_2$.
\end{corollary}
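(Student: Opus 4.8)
The plan is to read this count off from the structural analysis already carried out in the proof of Theorem~\ref{t:skeleton}. Recall that that proof shows the following: if $N$ is a unimodular triangulation of $\Delta_{2g+2,2}$ whose dual contains a bridgeless connected subgraph of genus $g$, then $N$ is forced to contain the horizontal edges $s_i=\{(i,1),(i+1,1)\}$ for $i=1,\dots,g-1$ and the fan edges $\{(0,2),(i,1)\}$ for $i=1,\dots,g$. Together with the three sides of $\Delta_{2g+2,2}$ (subdivided at the boundary lattice points $(0,1)$ and $(g+1,1)$), these forced edges cut $\Delta_{2g+2,2}$ into a fixed fan of unimodular triangles lying in the region $y\ge 1$ and a single region $Q$ contained in the strip $0\le y\le 1$. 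The only remaining freedom in $N$ is (i) whether the edge $e_1=\{(0,2),(1,0)\}$ lies in $N$, (ii) whether $e_2=\{(0,2),(2g+1,0)\}$ lies in $N$, and (iii) the choice of a unimodular triangulation of $Q$. So I would split into the four cases according to (i) and (ii), and in each case (a) identify $Q$ precisely and (b) apply Lemma~\ref{l:trapezoid}.

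For step (a), the key is a local forcing argument at each of the two lower corners, of exactly the same flavor as the unimodularity bookkeeping in Theorem~\ref{t:skeleton}. Consider the triangle of $N$ lying on the lower side of the forced edge $\{(0,2),(1,1)\}$: unimodularity forces its third vertex to be a lattice point $p$ with $p_x+p_y=1$, hence $p=(1,0)$ or $p=(0,1)$. The case $p=(1,0)$ is precisely the case $e_1\in N$, and one then checks (again by unimodularity, since any edge lies in a unimodular triangle) that $N$ also contains the triangles $\{(0,2),(0,1),(1,0)\}$, $\{(0,1),(0,0),(1,0)\}$ and $\{(0,2),(1,0),(1,1)\}$; so $Q$ has bottom edge starting at $(1,0)$ and upper-left vertex $(1,1)$. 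The case $p=(0,1)$ is the case $e_1\notin N$, and then $N$ contains $\{(0,1),(1,1)\}$, so $Q$ has bottom edge starting at $(0,0)$ and upper-left vertex $(0,1)$. Symmetrically, the triangle on the lower side of the forced edge $\{(0,2),(g,1)\}$ has third vertex $(2g+1,0)$ or $(g+1,1)$: if $e_2\in N$ then $Q$ has bottom edge ending at $(2g+1,0)$ and upper-right vertex $(g,1)$, and if $e_2\notin N$ then $Q$ has bottom edge ending at $(2g+2,0)$ and upper-right vertex $(g+1,1)$. In every case the top and bottom boundaries of $Q$ are subdivided at all intervening integer points (the relevant horizontal edges are forced), so $Q$ is a height-$1$ lattice trapezoid to which Lemma~\ref{l:trapezoid} applies.

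For step (b), Lemma~\ref{l:trapezoid} gives that a height-$1$ trapezoid with vertices $(a,0),(b,0),(c,1),(d,1)$ admits exactly $\binom{(b-a)+(d-c)}{\,b-a\,}$ unimodular triangulations. Plugging in the four cases: neither $e_1$ nor $e_2$ gives $(a,b,c,d)=(0,2g+2,0,g+1)$ and $\binom{3g+3}{g+1}$; exactly one of $e_1,e_2$ gives $(1,2g+2,1,g+1)$ or $(0,2g+1,0,g)$, each contributing $\binom{3g+1}{g}$, for a total of $2\binom{3g+1}{g}$; and both $e_1$ and $e_2$ gives $(1,2g+1,1,g)$ and $\binom{3g-1}{g-1}$. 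Summing over the four (disjoint and exhaustive) cases yields the corollary.

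The main obstacle is step (a): making rigorous that the bridgeless-core hypothesis leaves \emph{exactly} the claimed degrees of freedom — that is, that $e_1$ and $e_2$ are the only non-forced edges besides the diagonals internal to $Q$, and that including $e_i$ shrinks $Q$ in precisely the stated way. This is not conceptually new; it is the same ``an edge must lie in a unimodular triangle, hence the neighboring edge is determined'' argument used throughout the proof of Theorem~\ref{t:skeleton}, but it must be run separately at the two lower corners and for the two sub-cases at each corner, keeping careful track of which small unimodular triangles get forced. Once $Q$ is pinned down, step (b) is a mechanical application of Lemma~\ref{l:trapezoid}.
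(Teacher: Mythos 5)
Your proposal is correct and takes essentially the same route as the paper, whose proof of this corollary is literally ``use the proof of Theorem~\ref{t:skeleton} and Lemma~\ref{l:trapezoid}'': you split into the same four cases according to whether $e_1$ and $e_2$ appear, identify the remaining free trapezoid $Q$, and apply the lemma to get $\binom{3g+3}{g+1}$, $2\binom{3g+1}{g}$, and $\binom{3g-1}{g-1}$. Your corner-by-corner forcing argument simply spells out the bookkeeping the paper leaves implicit, and your identification of $Q$'s top row as extending to $(0,1)$ and $(g+1,1)$ when $e_1$, $e_2$ are absent is exactly what is needed for the stated binomial coefficients.
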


\begin{proof}
Use the proof of Theorem \ref{t:skeleton}, which characterizes precisely which triangulations can occur, and Lemma \ref{l:trapezoid}.
\end{proof}

\bigskip
\end{document}